\newcommand{\Claim}[1]{\noindent\textbf{Claim. }\textit{#1}\\}
\newtheorem{theorem}{Theorem}[section]
\newtheorem{lemma}[theorem]{Lemma}
\theoremstyle{definition}
\newtheorem{definition}[theorem]{Definition}
\newcommand{\D}{\mathcal{D}}
\newcommand{\DV}{\mathcal{D}_\mathcal{V}}
\newcommand{\DW}{\mathcal{D}_\mathcal{W}}
\newcommand{\DVW}{\mathcal{D}_{\mathcal{VW}}}
\newenvironment{proofc}{\noindent\textit{Proof of Claim.}}{\\}
\newcommand{\Case}[1]{\textbf{Case #1.}}
\newcommand{\V}{\mathcal{V}}
\newcommand{\W}{\mathcal{W}}
\begin{document}

\title[]{On the disk complexes of weakly reducible, unstabilized Heegaard splittings of genus three I - the Structure Theorem}

\author{Jungsoo Kim}
\date{February 21, 2015}

\begin{abstract}
	Let $(\V,\W;F)$ be a weakly reducible, unstabilized, genus three Heegaard splitting in an orientable, irreducible $3$-manifold $M$ and $\DVW(F)$ the subset of the disk complex $\D(F)$ consisting of simplices having at least one vertex from $\V$ and at least one vertex from $\W$.
	In this article, we describe the shape of $\DVW(F)$ and prove that there is a function from the components of $\DVW(F)$ to the isotopy classes of the generalized Heegaard splittings obtained by weak reductions from $(\V,\W;F)$, where this function describes how the thick levels are embedded in the relevant compression bodies.
\end{abstract}

\address{\parbox{4in}{
	BK21 PLUS SNU Mathematical Sciences Division,\\ Seoul National University\\ 
	1 Gwanak-ro, Gwanak-Gu, Seoul 151-747, Korea\\
}} 
	
\email{pibonazi@gmail.com}
\subjclass[2000]{57M50}

\maketitle
\section{Introduction and Result}
Throughout this paper, all surfaces and $3$-manifolds will be taken to be compact and orientable.
In \cite{8}, Hempel introduced the ``\textit{distance}'' of a Heegaard splitting $(\V,\W;F$), where it comes from the simplicial complex of isotopy classes of the compressing disks of $F$, say the ``\textit{disk complex}'' $\D(F)$.
The distance has been mainly used for \textit{strongly irreducible} Heegaard splittings after the Hempel's work where it is the condition that every compressing disk of $\V$ must intersect every compressing disk of $\W$, because the distance is just either $0$ or $1$ in the cases of \textit{weakly reducible} (i.e. not strongly irreducible) Heegaard splittings.
But Bachman made use of the disk complex to analyze  weakly reducible Heegaard splittings by introducing the concept a ``\textit{critical surface}'' in \cite{1} \cite{2}, and he gave the proof of \textit{Gordon's Conjecture} in \cite{2}.
Moreover, he generalized this idea to the concept a ``\textit{topologically minimal surface}'' in \cite{3} and the ``\textit{topological index}'' of a topologically minimal surface $F$ refers to the homotopy index of $\D(F)$.
Especially, this idea gave counterexamples of \textit{the Stabilization Conjecture} by using the concept a ``\textit{$g$-barrier surface}'' in \cite{7}.
In \cite{4} \cite{5} \cite{6} , he proved that there is a resemblance between a topologically minimal surface and a geometrically minimal surface and therefore these results give a strong connection between differential geometry and topology in a $3$-manifold.
Hence, the importance of analyzing the exact shape of the components of the subset of $\D(F)$ consisting of simplices having at least one vertex from $\V$ and at least one vertex from $\W$, say $\DVW(F)$, arose after the Bachman's works.
But there has been no known result on  what information does each component of $\DVW(F)$ contain other than the topological index.
In this article, we will find another information contained in  the components of $\DVW(F)$.

Let $M$ be an orientable, irreducible $3$-manifold and $(\V,\W;F)$ a weakly reducible, unstabilized, genus three Heegaard splitting in $M$.
In \cite{11}, the author proved that (i) $\D(F)$ is contractible if there is only one generalized Heegaard splitting obtained by weak reduction from $(\V,\W;F)$  and the embedding of each thick level in the relevant compression body is also unique up to isotopy or (ii) $\pi_1(\D(F))$ is non-trivial otherwise.
Indeed, we can find specific descriptions of $\DVW(F)$ for all cases when $\D(F)$ is contractible in \cite{11}.
In this article, motivated by this idea, we will describe the exact shape of each component of $\DVW(F)$ in general case by using the concept a ``\textit{building block}'' of $\DVW(F)$ (see Definition \ref{definition-3-3}, Definition \ref{definition-3-5} and Definition \ref{definition-3-6}) where each building block corresponds to only one generalized Heegaard splitting obtained by weak reduction from $(\V,\W;F)$ up to isotopy in Theorem \ref{theorem-3-13}.
Moreover, each component of $\DVW(F)$ is contractible.

\begin{theorem}[Theorem \ref{theorem-3-13}]\label{theorem-1-1}
Let $(\V,\W;F)$ be a weakly reducible, unstabilized, genus three Heegaard splitting in an orientable, irreducible $3$-manifold $M$.
Then we can characterize the components of $\DVW(F)$ into five types, where each component is contractible.
Moreover, there is a uniquely determined weak reducing pair representing each component of $\DVW(F)$.
\end{theorem}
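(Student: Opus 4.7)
The plan is to reduce the problem to a finite case analysis driven by the low genus of $F$, and then to realize each component of $\DVW(F)$ as an explicit ``building block'' that visibly deformation retracts onto a distinguished weak reducing pair. Since every vertex of $\DVW(F)$ lies on some edge by the definition of $\DVW(F)$, it suffices to enumerate the edges --- equivalently, the weak reducing pairs --- and then describe how the higher simplices attach.

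First I would stratify the weak reducing pairs $(V,W)$ of $(\V,\W;F)$ by the isotopy class of the generalized Heegaard splitting they induce. Because $F$ has genus three, each compression body has small complexity, so the possible topological types of the components of $F$ cut along $V\cup W$, together with the genera of the two thick levels, fall into only finitely many cases. After discarding the configurations ruled out by the unstabilized hypothesis and by the results of \cite{11}, I expect exactly the five types announced in the statement. For each type I would introduce the corresponding building block following Definitions \ref{definition-3-3}, \ref{definition-3-5}, and \ref{definition-3-6}, prescribing which disks of $\V$ are allowed to pair with which disks of $\W$.

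The central step is to show that these building blocks partition the edge set of $\DVW(F)$ and that two weak reducing pairs lying in the same building block are always joined by a path of edges in $\DVW(F)$. The local move is this: whenever two weak reducing pairs share a vertex in $\DVW(F)$, the two generalized Heegaard splittings they produce are related by an isotopy across the shared disk and are therefore isotopic. Iterating along a chain of simplices yields a well-defined function from the components of $\DVW(F)$ to the isotopy classes of generalized Heegaard splittings; the partition statement is its injectivity. The distinguished weak reducing pair in each component is then the one whose two disks are preserved under every allowed disk-exchange inside the block, so its uniqueness is built into the construction.

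Contractibility of each building block should follow from its explicit combinatorial description: in each of the five types the block is either a cone over a contractible subcomplex based at the distinguished pair, or admits an obvious collapse onto that edge, and I would carry this out case by case. The main obstacle is the injectivity step above --- certifying that a chain of edges in $\DVW(F)$ cannot silently interpolate between non-isotopic generalized Heegaard splittings. This is where the unstabilized, genus-three hypotheses are essential, since in higher genus new weak reductions can appear along such chains; the argument will likely interleave careful disk-exchange moves with the contractibility analysis of $\D(F)$ developed in \cite{11}.
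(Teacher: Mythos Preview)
Your proposal has the right overall architecture --- stratify weak reducing pairs by the induced GHS, assemble building blocks, and show they exhaust and partition $\DVW(F)$ --- and this matches the paper's strategy. However, there is a genuine gap in your ``central step,'' and you have the main obstacle backwards.

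You argue that whenever two weak reducing pairs share a vertex they yield isotopic generalized Heegaard splittings, and hence that the function $\{\text{components}\}\to\{\text{GHS isotopy classes}\}$ is well defined; you then claim the partition statement is equivalent to the \emph{injectivity} of this function. But this function is \emph{not} injective: Lemma~\ref{lemma-3-15} shows explicitly that two distinct building blocks can produce isotopic generalized Heegaard splittings in $M$, distinguished only by how the thick levels sit inside $\V$ and $\W$. So knowing that the GHS is constant along a component does \emph{not} by itself prevent two different building blocks of the same type from meeting. The paper handles this separately: Lemma~\ref{lemma-3-10} uses your GHS-invariance idea (via Lemma~8.4 of \cite{2} and Lemma~\ref{lemma-2-16}) to show blocks of \emph{different} types are disjoint, but for blocks of the \emph{same} type it needs the finer combinatorial Lemmas~\ref{lemma-3-11} and~\ref{lemma-3-12}, which argue directly with the centers and the uniqueness of meridian disks in solid tori --- no GHS invariant is strong enough here.

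Conversely, the step you flag as the ``main obstacle'' --- that a chain of edges cannot interpolate between non-isotopic GHSs --- is actually the easy direction: Lemma~8.4 of \cite{2} produces a sequence of $\V$- and $\W$-faces between any two vertex-sharing weak reducing pairs, and Lemma~\ref{lemma-2-16} makes the GHS (and even the thick-level embeddings) constant along that sequence. The real work is the disjointness of same-type blocks, and your outline does not address it. You should also note that the partition must cover all simplices, not just edges; the paper needs a separate argument (Lemma~\ref{lemma-3-8}) to place every $2$- and $3$-simplex of $\DVW(F)$ inside some building block.
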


We can refer to Figure \ref{figure1} for the shapes of the components of $\DVW(F)$.
\begin{figure}
\includegraphics[width=12cm]{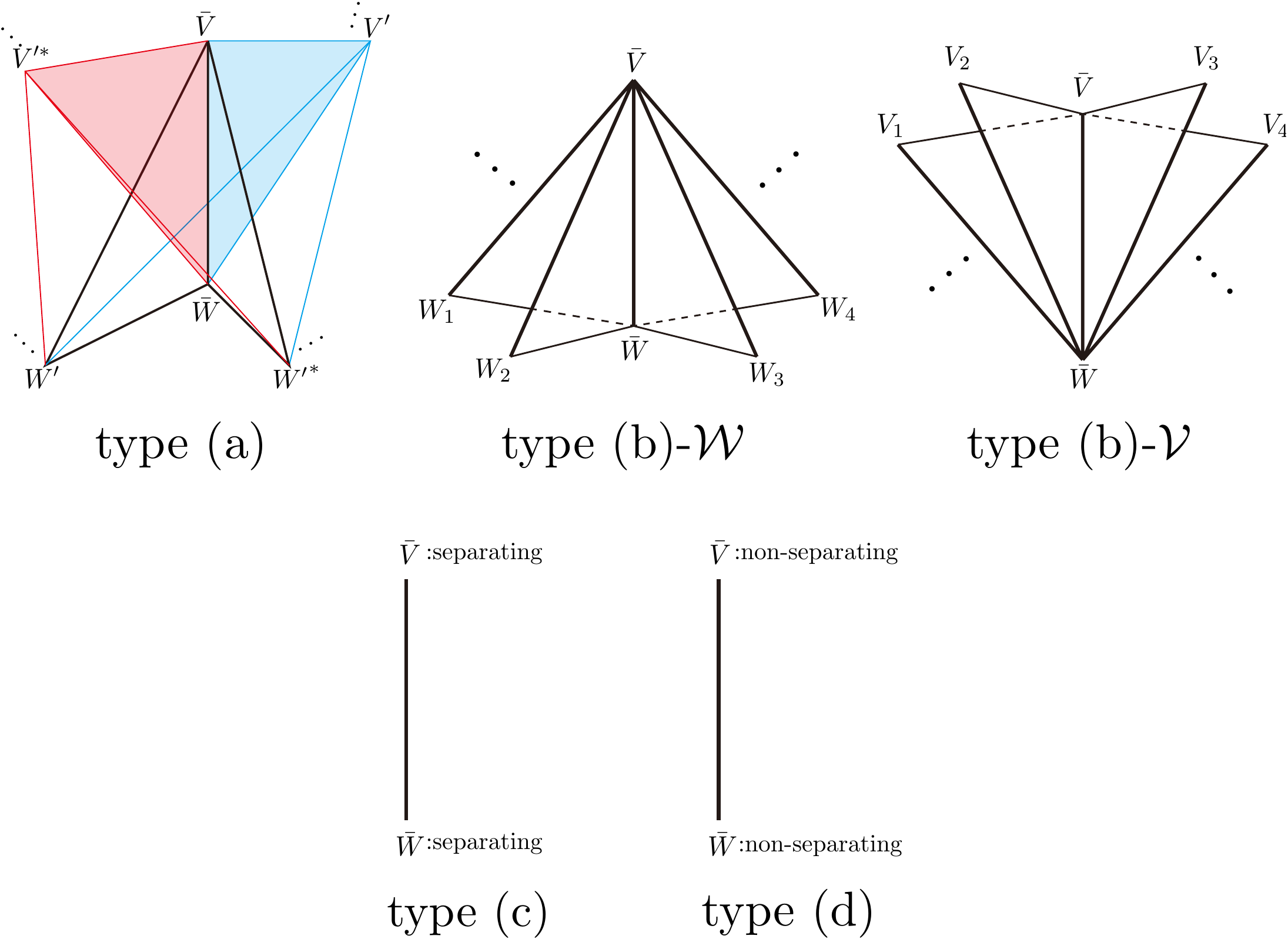}
\caption{the five types of components of $\DVW(F)$\label{figure1}}
\end{figure}

As a result, we prove the following theorem which reveals a hidden structure behind $\DVW(F)$.

\begin{theorem}[Theorem \ref{theorem-3-16}, the Structure Theorem]\label{theorem-1-2}
Let $(\V,\W;F)$ be a weakly reducible, unstabilized, genus three Heegaard splitting in an orientable, irreducible $3$-manifold $M$.
Then there is a function from the components of $\DVW(F)$ to the isotopy classes of the generalized Heegaard splittings obtained by weak reductions from $(\V,\W;F)$.
The number of components of the preimage of an isotopy class of this function is the number of ways to embed the thick level contained in $\V$ into $\V$ (or in $\W$ into $\W$).
This means that if we consider a generalized Heegaard splitting $\mathbf{H}$ obtained by weak reduction from $(\V,\W;F)$, then the way to embed the thick level of $\mathbf{H}$ contained in $\V$ into $\V$ determines the way to embed the thick level of $\mathbf{H}$ contained in $\W$ into $\W$ up to isotopy and vise versa.
\end{theorem}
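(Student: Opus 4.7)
My plan is to leverage Theorem~\ref{theorem-1-1} directly. By that theorem, each component $C$ of $\DVW(F)$ has a uniquely determined representative weak reducing pair $(D_C,E_C)$ with $D_C\in\V$ and $E_C\in\W$, and weak reduction of $(\V,\W;F)$ along $(D_C,E_C)$ produces a generalized Heegaard splitting $\mathbf{H}_C$ of $M$. I define the function $\Phi$ by $C\mapsto[\mathbf{H}_C]$, the isotopy class of $\mathbf{H}_C$. Well-definedness is immediate from the uniqueness clause of Theorem~\ref{theorem-1-1}, and surjectivity is clear because every generalized Heegaard splitting obtained by weak reduction comes from some weak reducing pair, which lies in some component of $\DVW(F)$.

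The core of the argument is the preimage count. Fix a generalized Heegaard splitting $\mathbf{H}$ with thick level $F_\V\subset\V$ and thick level $F_\W\subset\W$. A component $C\in\Phi^{-1}([\mathbf{H}])$ corresponds to a representative pair $(D_C,E_C)$ whose weak reduction yields a generalized Heegaard splitting isotopic to $\mathbf{H}$. The disk $D_C$ records an isotopy class of embedding of $F_\V$ into $\V$, namely that of the surface obtained by compressing $F$ along $D_C$ and then isotoping the result inside $\V$. I would then prove that the assignment sending $C$ to the embedding of $F_\V$ into $\V$ induced by $D_C$ gives a bijection between $\Phi^{-1}([\mathbf{H}])$ and the set of isotopy classes of embeddings of $F_\V$ into $\V$ that are realized as $\V$-side thick levels of $\mathbf{H}$. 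Injectivity follows from the uniqueness clause of Theorem~\ref{theorem-1-1}, since two components whose distinguished $\V$-disks induce the same embedding of $F_\V$ would have isotopic $D_C$'s in $\V$, forcing the representative pairs to coincide up to the identifications already captured by the building-block structure (Definitions~\ref{definition-3-3},~\ref{definition-3-5},~\ref{definition-3-6}). Surjectivity is established case-by-case against the five types of components listed in Theorem~\ref{theorem-1-1} and depicted in Figure~\ref{figure1}: for each type, one reads off from the building block which embedding of $F_\V$ into $\V$ is realized, and verifies that every embedding compatible with $\mathbf{H}$ appears.

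The symmetry assertion then follows formally. Once a component $C\in\Phi^{-1}([\mathbf{H}])$ is fixed, both $D_C$ and $E_C$ are determined, so the embedding of $F_\V$ into $\V$ (read from $D_C$) and the embedding of $F_\W$ into $\W$ (read from $E_C$) determine each other via the intermediary $C$. The main obstacle I anticipate is the surjectivity half of the bijection in the fiber count: showing that no isotopy class of embedding of $F_\V$ into $\V$ compatible with $\mathbf{H}$ is missed by the components of $\DVW(F)$. This will require a careful inspection of the building blocks in each of the five types, checking that the families of weak reducing pairs comprising them exhaust all $\V$-side embedding classes. I expect the argument to be essentially combinatorial, driven by the explicit shapes in Figure~\ref{figure1}, with Theorem~\ref{theorem-1-1} providing the key rigidity: each component's distinguished pair is unique, so counting components reduces to counting inequivalent representative pairs.
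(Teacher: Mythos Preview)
Your outline has the right overall shape, but the injectivity step contains a genuine gap, and you have misidentified which direction is the hard one.

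You assert that ``two components whose distinguished $\V$-disks induce the same embedding of $F_\V$ would have isotopic $D_C$'s in $\V$.'' This implication is the heart of the theorem and is \emph{not} immediate: a priori two non-isotopic compressing disks $D_{C_1}, D_{C_2}$ in $\V$ could yield isotopic compressed surfaces $\bar F_{D_{C_1}}\simeq\bar F_{D_{C_2}}$. The paper's argument for this step is concrete and is missing from your plan: one isotopes the two thick levels together in $\V$, lets $\V'$ be the compression body between that common thick level and $F=\partial_+\V$, and observes that $\V'$ is a genus-three compression body whose minus boundary contains a genus-two component. One then invokes Lemma~\ref{lemma-3-14} to know that each $D_{C_i}$ (being a center disk) does not cut off a solid torus from $\V$, hence not from $\V'$; a short case analysis on whether $\partial_-\V'$ is connected or not shows that such a disk in $\V'$ is unique up to isotopy. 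Without this compression-body uniqueness argument you have no way to pass from ``same embedding'' to ``same disk.'' Your subsequent implication ``isotopic $D_C$'s $\Rightarrow$ same component'' also needs more than the uniqueness clause of Theorem~\ref{theorem-1-1}: that clause says each component has a unique center, not that distinct components have disjoint centers. You need Lemma~\ref{lemma-3-11} (centers of distinct building blocks cannot share a vertex) together with Theorem~\ref{theorem-3-13}.

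Conversely, the surjectivity you flag as the ``main obstacle'' is essentially tautological. The embeddings being counted are precisely those arising as $\V$-side thick levels of weak reductions yielding $\mathbf{H}$; any such embedding by definition comes from some weak reducing pair, which lies in some component of $\DVW(F)$ mapping to $[\mathbf{H}]$. No case-by-case inspection of the five types is needed here (the paper does none). The converse direction---that distinct embeddings give distinct components---is Lemma~\ref{lemma-3-15}, which follows directly from Lemma~\ref{lemma-2-16}. So the real work is exactly the injectivity step you glossed over.
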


\section{Preliminaries\label{section2}}

\begin{definition}
A \textit{compression body} (\textit{generalized compression body} resp.) is a $3$-manifold which can be obtained by starting with some closed, orientable, connected surface $F$, forming the product $F\times I$, attaching some number of $2$-handles to $F\times\{1\}$ and capping off all (\textit{some} resp.) resulting $2$-sphere boundary components that are not contained in $F\times\{0\}$ with $3$-balls. 
The boundary component $F\times\{0\}$ is referred to as $\partial_+$. 
The rest of the boundary is referred to as $\partial_-$. 
\end{definition}

\begin{definition}
A \textit{Heegaard splitting} of a $3$-manifold $M$ is an expression of $M$ as a union $\V\cup_F \W$, denoted   as $(\V,\W;F)$,  where $\V$ and $\W$ are compression bodies that intersect in a transversally oriented surface $F=\partial_+\V=\partial_+\W$. 
We say $F$ is the \textit{Heegaard surface} of this splitting. 
If $\V$ or $\W$ is homeomorphic to a product, then we say the splitting  is \textit{trivial}. 
If there are compressing disks $V\subset \V$ and $W\subset \W$ such that $V\cap W=\emptyset$, then we say the splitting is \textit{weakly reducible} and call the pair $(V,W)$ a \textit{weak reducing pair}. 
If $(V,W)$ is a weak reducing pair and $\partial V$ is isotopic to $\partial W$ in $F$, then we call $(V,W)$ a \textit{reducing pair}.
If the splitting is not trivial and we cannot take a weak reducing pair, then we call the splitting \textit{strongly irreducible}. 
If there is a pair of compressing disks $(\bar{V},\bar{W})$ such that $\bar{V}$ intersects $\bar{W}$ transversely in a point in $F$, then we call this pair a \textit{canceling pair} and say the splitting is \textit{stabilized}. 
Otherwise, we say the splitting is \textit{unstabilized}.
\end{definition}

\begin{definition}
Let $F$ be a surface of genus at least two in a compact, orientable $3$-manifold $M$. 
Then the \emph{disk complex} $\D(F)$ is defined as follows: 
\begin{enumerate}[(i)]
\item Vertices of $\D(F)$ are isotopy classes of compressing disks for $F$.
\item A set of $m+1$ vertices forms an $m$-simplex if there are representatives for each
that are pairwise disjoint.
\end{enumerate}
\end{definition}

\begin{definition}
Consider a Heegaard splitting $(\V,\W;F)$ of an orientable, irreducible $3$-manifold $M$. 
Let $\DV(F)$ and $\DW(F)$ be the subcomplexes of $\D(F)$ spanned by compressing disks in $\V$ and $\W$ respectively. 
We call these subcomplexes \textit{the disk complexes of $\V$ and $\W$}.
Let $\DVW(F)$ be the subset  of $\D(F)$ consisting of the simplices having at least one vertex from $\DV(F)$ and at least one vertex from $\DW(F)$.
\end{definition}

\begin{theorem}[McCullough, \cite{13}]
$\DV(F)$ and $\DW(F)$ are contractible.
\end{theorem}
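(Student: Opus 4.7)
The plan is to prove contractibility of $\DV(F)$; the argument for $\DW(F)$ is identical by the symmetric roles of $\V$ and $\W$. The strategy is to fix a compressing disk $E\subset\V$ and show that any finite subcomplex of $\DV(F)$ can be deformed inside $\DV(F)$ into the star of the vertex $[E]$. Since the star of a vertex in a simplicial complex is contractible and $\DV(F)$ is a CW complex, this weak contractibility will upgrade to contractibility.

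For the vertex step, let $[D]$ be a vertex not in the open star of $[E]$. Put $D$ transverse to $E$ in minimal position, so that $D\cap E$ is a union of arcs and simple closed curves essential in both disks. A standard innermost-loop / outermost-arc argument on $E$ produces a subdisk $E_0\subset E$ whose boundary is either an innermost loop of $D\cap E$ in $E$, or the union of an outermost arc of $D\cap E$ in $E$ with a subarc of $\partial E$. Surgering $D$ along $E_0$ yields two disks $D_1,D_2\subset\V$, each disjoint from $D$ and each with strictly fewer intersection components with $E$. Standard compression-body arguments, using that $M$ is irreducible and that $\V$ is a compression body, show that at least one of $D_1,D_2$, call it $D'$, remains an essential compressing disk. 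Thus $[D']$ is a vertex of $\DV(F)$ joined by an edge to $[D]$, and the pair $([D],[D'])$ constitutes an elementary move strictly decreasing the intersection complexity $|D\cap E|$.

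To upgrade this vertex surgery to a simplicial flow, consider a simplex $\sigma=[D^{(0)},\ldots,D^{(k)}]$ with pairwise disjoint representatives, all simultaneously in minimal position with $E$. Choose a surgery disk $E_0\subset E$ that is outermost among all components of $E\cap(D^{(0)}\cup\cdots\cup D^{(k)})$; suppose $\partial E_0$ meets $D^{(i)}$. The resulting essential disk $D'$ is then automatically disjoint from every other $D^{(j)}$, because those disks meet $E$ in components separated from $\partial E_0$ by the outermost choice, and they are already disjoint from $D^{(i)}$ in $\V$. Consequently $[D^{(0)},\ldots,D^{(k)},D']$ is a simplex of $\DV(F)$ properly containing $\sigma$, and $\sigma$ deformation retracts across this larger simplex onto the face obtained by replacing $D^{(i)}$ with $D'$. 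This move strictly decreases the total complexity $\sum_j|D^{(j)}\cap E|$. Performing such moves simplex-by-simplex on a finite subcomplex $K$, in order of lexicographically decreasing $(\dim\sigma,\sum_j|D^{(j)}\cap E|)$, yields a finite sequence of homotopies carrying $K$ into the star of $[E]$.

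The principal obstacle is coherence: a given disk $D^{(i)}$ typically belongs to many simplices of $\DV(F)$, and different simplices may prescribe different outermost choices $E_0$ and hence different replacements $D'$. The remedy is to keep the deformation local to one simplex at a time, exploiting that the elementary move $\sigma\mapsto\sigma\cup\{D'\}$ is a simplicial collapse onto the opposite face, and to use the lexicographic ordering above so that each move strictly decreases complexity and later moves are only applied to simplices that have already survived earlier collapses. A secondary technical point is to verify that at least one of $D_1,D_2$ is essential after surgery; this is routine from irreducibility of $M$ and the structure of compression bodies, since an inessential outcome would force $D$ itself to have been inessential or would produce a reducing sphere in $M$.
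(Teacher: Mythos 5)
The paper does not prove this statement at all: it is quoted as a known theorem of McCullough \cite{13}, so there is no in-paper argument to compare yours against; your proposal has to stand on its own as a proof of McCullough's theorem, and as written it does not. The outline you give is the standard surgery-flow strategy (fix a disk $E$, surger other disks along outermost pieces of $E$, and push everything into the star of $[E]$), but the step you yourself flag as ``the principal obstacle'' --- coherence of the surgeries across simplices --- is exactly the mathematical content of the theorem, and your proposed remedy does not resolve it. Replacing the vertex $[D^{(i)}]$ by $[D']$ ``locally to one simplex at a time'' does not define a deformation of a subcomplex $K$: the vertex $[D^{(i)}]$ lies in many simplices of $K$, the outermost subdisk $E_0$ (hence the replacement $D'$) genuinely depends on which simplex you look at, and the move you describe across $\sigma\cup\{[D']\}$ is only a collapse of $K$ if the relevant face of $\sigma$ is free in $K$, which it typically is not. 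A lexicographic ordering of simplices does not repair this, because the issue is not the order of moves but that the prescribed homotopies on two simplices sharing the face containing $[D^{(i)}]$ disagree on that face. The known proofs (McCullough's, and the Hatcher-style surgery flow used in later treatments) handle precisely this point by making the surgery canonical and simultaneous --- e.g.\ surgering along all outermost pieces at once and interpolating via a weighted, parametrized family of disks so that the resulting retraction is well defined on the whole complex --- and none of that machinery appears in your sketch.

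Two smaller inaccuracies: closed curves of $D\cap E$ are never essential in a disk, and in an irreducible manifold they are removed by isotopy (innermost-disk swaps) before any surgery, so the surgery step should be stated only for outermost arcs; and surgery along an innermost closed curve yields a disk and a sphere, not ``two disks.'' The claim that at least one surgered disk stays essential is indeed standard, but it deserves an actual argument (if both $\partial D_1$ and $\partial D_2$ bounded disks in $F$, one assembles a disk in $F$ bounded by $\partial D$ from those disks and the band, contradicting essentiality of $D$), rather than an appeal to routine.
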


Note that $\D(F)=\DV(F)\cup \DVW(F)\cup \DW(F)$.
Hence, $\D(F)$ may have a non-trivial homotopy type in general (see Figure \ref{figure0}).
\begin{figure}
\includegraphics[width=12cm]{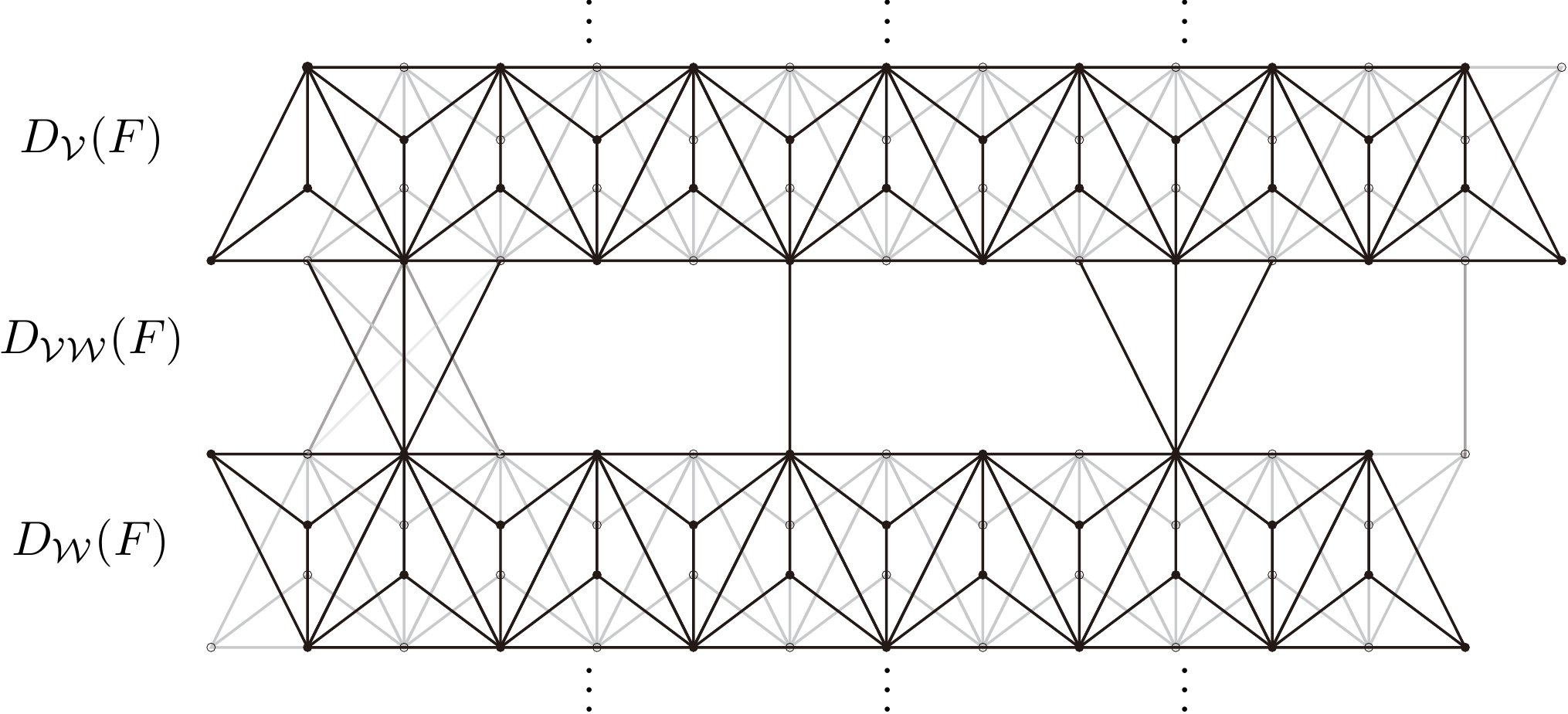}
\caption{$\D(F)$ may have a non-trivial homotopy type in general.\label{figure0}}
\end{figure}

From now on, we will consider only unstabilized Heegaard splittings of an irreducible $3$-manifold. 
If a Heegaard splitting of a compact $3$-manifold is reducible, then the manifold is reducible or the splitting is stabilized (see \cite{14}).
Hence, we can exclude the possibilities of reducing pairs among weak reducing pairs.

\begin{definition}
Suppose $W$ is a compressing disk for $F\subset M$. 
Then there is a subset of $M$ that can be identified with $W\times I$ so that $W=W\times\{\frac{1}2\}$ and $F\cap(W\times I)=(\partial W)\times I$. 
We form the surface $F_W$, obtained by \textit{compressing $F$ along $W$}, by removing $(\partial W)\times I$ from $F$ and replacing it with $W\times(\partial I)$. 
We say the two disks $W\times(\partial I)$ in $F_W$ are the $\textit{scars}$ of $W$. 
\end{definition}

\begin{lemma}[Lustig and Moriah, Lemma 1.1 of \cite{12}] \label{lemma-2-7}
Suppose that $M$ is an irreducible $3$-manifold and $(\V,\W;F)$ is an unstabilized Heegaard splitting of $M$. 
If $F'$ is obtained by compressing $F$ along a collection of pairwise disjoint disks, then no $S^2$ component of $F'$ can have scars from disks in both $\V$ and $\W$. 
\end{lemma}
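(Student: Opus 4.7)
The plan is to argue by contradiction and manufacture a reducing pair for $(\V,\W;F)$; since the paper has just noted that such a pair cannot exist under the standing hypotheses ($M$ irreducible, splitting unstabilized) by \cite{14}, this will finish the argument.

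Suppose then that an $S^2$ component $S\subset F'$ carries scars $v_1,\ldots,v_a$ from disks $V_1,\ldots,V_a\in\V$ (with $a\geq 1$) and $w_1,\ldots,w_b$ from disks $W_1,\ldots,W_b\in\W$ (with $b\geq 1$). Two observations drive everything. First, each scar is a parallel copy of its source disk inside the ambient compression body, so $\partial v_i\sim\partial V_i$ on $F$ through the annular neighbourhood of $\partial V_i$ in $F$ that was removed during compression, and likewise $\partial w_j\sim\partial W_j$. Second, the non-scar part $P:=S\setminus\{\text{scars on }S\}$ is a planar subsurface of $F$ whose boundary circles are precisely the circles $\partial v_i$ and $\partial w_j$.

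On this planar $P$ I would choose pairwise disjoint, properly embedded arcs forming (a) a spanning tree on $\{\partial v_i\}$ avoiding every $\partial w_j$, and (b) a spanning tree on $\{\partial w_j\}$ disjoint from (a) and avoiding every $\partial v_i$; these families exist by planarity and connectedness of $P$. Band-sum the $v_i$ along the arcs in (a) into a single disk $\bar v\subset S$, and band-sum the $w_j$ along (b) into $\bar w\subset S$. Since $\bar v,\bar w$ are then disjoint closed disks on the sphere $S$, the closure $A$ of $S\setminus(\bar v\cup\bar w)$ is an annulus on $S$, contained in $P\subset F$, with $\partial A=\partial\bar v\cup\partial\bar w$; so $\partial\bar v\sim\partial\bar w$ on $F$. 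Transporting the same combinatorial bands from $S$ into $\V$ via the parallelism $v_i\sim V_i$ realizes $\bar v$, up to isotopy in $\V$, as an iterated band-sum compressing disk $\bar V:=V_1\#_{\alpha_1}\cdots\#_{\alpha_{a-1}}V_a\in\V$, and symmetrically $\bar w$ realizes $\bar W\in\W$. Chaining the isotopies gives $\partial\bar V\sim\partial\bar v\sim\partial\bar w\sim\partial\bar W$ on $F$, so $(\bar V,\bar W)$ is a reducing pair for $(\V,\W;F)$ --- the desired contradiction. (The case $F\cong S^2$ is vacuous since then $F$ has no compressing disks at all.)

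The main obstacle is verifying that $\bar V$ and $\bar W$ are genuine compressing disks, i.e.\ that their band-sum boundaries remain essential on $F$. Were $\partial\bar V$ inessential on $F$, it would bound a disk $D_V\subset F$, and the annulus $A$ would then force $\partial\bar W$ to bound a disk $D_W\subset F$ as well; but then $D_V\cup A\cup D_W$ would be an embedded $2$-sphere in $F$, impossible unless $F\cong S^2$, which we have excluded. So both $\bar V$ and $\bar W$ are compressing disks, and the argument closes. The remaining technicalities --- realizing the two spanning-tree arc families as genuinely disjoint embeddings in $P$, and matching the on-$S$ band-sum of scars with the in-$\V$ (respectively in-$\W$) band-sum of the source disks --- are routine planar-topology manipulations.
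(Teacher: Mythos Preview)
The paper does not prove this lemma; it is quoted from Lustig and Moriah \cite{12}, so there is no in-paper argument to compare against. Your overall strategy --- band-sum the scars from each side into a single disk on $S$ and read off a reducing pair $(\bar V,\bar W)$, which cannot exist by the remark you cite from \cite{14} --- is the standard one, and your construction of $\bar v$, $\bar w$ and the annulus $A\subset P$ is carried out correctly.

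There is, however, a gap in the essentiality step. You claim that if $\partial\bar V$ bounds a disk $D_V\subset F$ and $\partial\bar W$ bounds $D_W\subset F$, then $D_V\cup A\cup D_W$ is an embedded $2$-sphere in $F$. But when $F$ has positive genus the disk bounded by an inessential simple closed curve lies on a \emph{unique} side, so you cannot arrange $D_V$ to sit on the side of $\partial\bar V$ away from $A$. If in fact $A\subset D_V$, then $D_W$ is forced to equal $D_V\setminus\operatorname{int}(A)$, and $D_V\cup A\cup D_W=D_V$ is merely a disk; no sphere appears and your sentence yields no contradiction.

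The repair is short and avoids the sphere entirely. In $P$ the curve $\partial\bar V=\partial\bar v$ separates the connected piece $\bar v\cap P$ (whose remaining boundary circles are $\partial V_1,\dots,\partial V_a$) from the connected piece $(A\cup\bar w)\cap P$ (whose remaining boundary circles are $\partial W_1,\dots,\partial W_b$). A putative disk $D\subset F$ with $\partial D=\partial\bar V$ must contain one of these two pieces, and hence must contain some $\partial V_i$ or some $\partial W_j$. Either curve is essential in $F$ since it bounds a compressing disk, and an essential curve cannot lie inside a disk in $F$. This contradiction shows $\partial\bar V$ is essential; the same argument applied to $\partial\bar W$ finishes the job, and your proof then closes as written.
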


\begin{lemma}[J. Kim, Lemma 2.9 of \cite{9}]\label{lemma-2-8}
Suppose that $M$ is an irreducible $3$-manifold and $(\V,\W;F)$ is an unstabilized, genus three  Heegaard splitting of $M$. 
If there exist three mutually disjoint compressing disks $V$, $V'\subset\V$ and $W\subset \W$, then either $V$ is isotopic to $V'$, or one of $\partial V$ and $\partial V'$ bounds a punctured torus $T$ in $F$ and  the other is a non-separating loop in $T$. 
Moreover, we cannot choose three weak reducing pairs $(V_0, W)$, $(V_1,W)$, and $(V_2,W)$ such that $V_i$ and $V_j$ are mutually disjoint and non-isotopic in $\V$ for $i\neq j$. 
\end{lemma}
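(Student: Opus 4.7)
The plan is to prove the first assertion by classifying how the disjoint essential simple closed curves $\partial V$ and $\partial V'$ can sit on the genus-three surface $F$, and then to use the auxiliary disk $W$ together with Lemma \ref{lemma-2-7} to exclude every configuration but the claimed one; the ``moreover'' statement will fall out of a short parity argument applied to the first part.

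Assuming $V$ is not isotopic to $V'$, the boundaries $\partial V, \partial V'$ are two disjoint, non-isotopic essential simple closed curves on $F$. I will organize the possibilities as (I) both $\partial V$ and $\partial V'$ non-separating in $F$, (II) exactly one of them separating (so that curve bounds a once-punctured torus $T$ and a genus-two subsurface $B$ with one boundary component), with sub-case (II-a) the non-separating curve lying in $T$ (which is the desired conclusion) and (II-b) it lying in $B$, and (III) both $\partial V$ and $\partial V'$ separating (forcing $F$ to decompose as a chain of three once-punctured tori). For each of (I), (II-b), and (III), I will compute $F_{V \cup V'}$ by noting that $\chi(F_{V \cup V'}) = \chi(F) + 4 = 0$ and then checking by hand that every component is a $2$-torus carrying at least one scar from $V$ or $V'$. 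Since $\partial W$ is disjoint from $\partial V \cup \partial V'$, it descends to a simple closed curve on some such torus component $\Sigma$. If $\partial W$ is essential on $\Sigma$, then compressing $F_{V \cup V'}$ along $W$ turns $\Sigma$ into a $2$-sphere that retains every scar of $\Sigma$ and gains a $W$-scar; this yields a sphere component of $F_{V \cup V' \cup W}$ carrying scars from both $\V$ and $\W$, contradicting Lemma \ref{lemma-2-7}. If $\partial W$ is inessential on $\Sigma$ it bounds a disk $D \subset \Sigma$: either $D$ contains no scars, in which case $\partial W$ bounds a disk on $F$ contradicting that $W$ is a compressing disk, or $D$ contains at least one scar, in which case compressing $W$ from the $D$-side again produces a $2$-sphere component with scars from both $\V$ and $\W$, violating Lemma \ref{lemma-2-7} once more. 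Hence only configuration (II-a) survives, which gives the first assertion.

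For the ``moreover'' statement, assume that $V_0, V_1, V_2 \subset \V$ and $W \subset \W$ are mutually disjoint with the $V_i$ pairwise non-isotopic. Applying the first assertion to each pair $(V_i, V_j)$ together with $W$, exactly one of $\partial V_i, \partial V_j$ is separating in $F$. Setting $s_i \in \{0,1\}$ to be $1$ when $\partial V_i$ is separating, we then have $s_i + s_j = 1$ for every $i \ne j$; summing the three equations gives $2(s_0 + s_1 + s_2) = 3$, which is impossible.

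The main technical obstacle is the case bookkeeping in the first part, and in particular case (II-a) itself requires an extra check: the component of $F_{V \cup V'}$ arising from $T$ turns out to be a $2$-sphere (carrying one $V$-scar and two $V'$-scars) rather than a torus, so one needs to verify separately that $\partial W$ cannot lie on the pair-of-pants $T \setminus \partial V'$---the same sphere-with-mixed-scars argument applied to this sphere forces $\partial W$ into $B$, which keeps (II-a) self-consistent. The parallel tracking of how the $V$- and $V'$-scars distribute among the components of $F_{V \cup V'}$ across the various sub-cases of (I), (II-b) and (III) is routine but requires care in identifying which sides of $\partial V$ and $\partial V'$ land in which region of $F \setminus (\partial V \cup \partial V')$.
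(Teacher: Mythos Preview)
This lemma is not proved in the present paper; it is quoted from the author's earlier work \cite{9} (Lemma~2.9 there), so there is no in-paper argument to compare your attempt against.

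Evaluated on its own merits, your proof is correct. The case split (I), (II-a), (II-b), (III) exhausts the configurations of two disjoint non-isotopic essential curves on a genus-three surface, and in each of (I), (II-b), (III) the compressed surface $F_{V\cup V'}$ does consist solely of tori, every one of which carries at least one $\V$-side scar (the only sub-case requiring a moment's care is (I) with $\partial V\cup\partial V'$ separating, where one checks that each complementary region must receive one boundary from each curve, else one of $\partial V,\partial V'$ would end up separating). Your dichotomy for $\partial W$ on such a torus---essential gives a sphere with mixed scars after compressing $W$; inessential bounding a scar-free disk contradicts $W$ being a compressing disk; inessential bounding a scarred disk again gives a mixed-scar sphere---is exactly the right invocation of Lemma~\ref{lemma-2-7}. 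The separate treatment of the sphere component arising in case (II-a) is necessary and handled correctly. Finally, the parity argument for the ``moreover'' clause is valid: the first assertion forces exactly one of each pair $\partial V_i,\partial V_j$ to be separating, and $s_i+s_j=1$ for all $i\neq j$ has no solution in $\{0,1\}$.

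The overall strategy---push everything to a scars-on-a-sphere contradiction via Lemma~\ref{lemma-2-7}---is the natural one here and is almost certainly the approach taken in \cite{9} as well.
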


The next is the definition of a ``\textit{generalized Heegaard splitting}'' originated from \cite{15}.

\begin{definition}[Definition 4.1 of \cite{2}]\label{definition-2-9}
A \textit{generalized Heegaard splitting} (GHS) $\mathbf{H}$ of a $3$-manifold $M$ is a pair of sets of pairwise disjoint, transversally oriented, connected surfaces, $\operatorname{Thick}(\mathbf{H})$ and $\operatorname{Thin}(\mathbf{H})$ (called the \textit{thick levels} and \textit{thin levels}, resp.), which satisfies the following conditions.
\begin{enumerate}
\item Each component $M'$ of $M-\operatorname{Thin}(\mathbf{H})$ meets a unique element $H_+$ of $\operatorname{Thick}(\mathbf{H})$ and $H_+$ is a Heegaard surface in $M'$.
Henceforth we will denote the closure of the component of $M-\operatorname{Thin}(\mathbf{H})$ that contains an element $H_+\in\operatorname{Thick}(\mathbf{H})$ as $M(H_+)$.
\item As each Heegaard surface $H_+\subset M(H_+)$ is transversally oriented, we can consistently talk about the points of $M(H_+)$ that are ``above''  $H_+$ or ``below'' $H_+$.
Suppose $H_-\in\operatorname{Thin}(\mathbf{H})$.
Let $M(H_+)$ and $M(H_+')$ be the submanifolds on each side of $H_-$.
Then $H_-$ is below $H_+$ if and only if it is above $H_+'$.
\item There is a partial ordering on the elements of $\operatorname{Thin}(\mathbf{H})$ which satisfies the following: Suppose $H_+$ is an element of $\operatorname{Thick}(\mathbf{H})$, $H_-$ is a component of $\partial M(H_+)$ above $H_+$ and $H_-'$ is a component of $\partial M(H_+)$ below $H_+$.
Then $H_->H_-'$.
\end{enumerate}
We call $\operatorname{Thin}(\mathbf{H})-\{\partial M\}$  the \textit{inner thin level}.
\end{definition}

\begin{definition}[Definition 5.1 of \cite{2}]
Let $M$ be a compact, orientable $3$-manifold.
Let $\mathbf{G}=\{T(\mathbf{G}),t(\mathbf{G})\}$ be a pair of sets of transversally oriented, connected surfaces in $M$ such that the elements of $T(\mathbf{G})\cup t(\mathbf{G})$ are pairwise disjoint.
Then we say $\mathbf{G}$ is a \textit{pseudo-GHS} if the following hold.
\begin{enumerate}
\item Each component $M'$ of $M-t(\mathbf{G})$ meets exactly one element $G_+$ of $T(\mathbf{G})$.
We denote the closure of $M'$ as $M(G_+)$.
\item Each element $G_+\in T(\mathbf{G})$ separates $M(G_+)$ into generalized (possibly trivial) compression bodies $\W$ and $\W'$, where $\partial_+\W=\partial_+\W'=G_+$.
\item There is a partial ordering of the elements of $t(\mathbf{G})$ that satisfies similar properties to the partial ordering of the thin levels of a GHS given in Definition \ref{definition-2-9}.
\end{enumerate}
\end{definition}

\begin{definition}[a restricted version of Definition 5.2, Definition 5.3 and Definition 5.6 of \cite{2}]\label{definition-2-11}
Let $M$ be a compact, orientable 3-manifold.
Let $\mathbf{H}$ be a Heegaard splitting of $M$, i.e. $\operatorname{Thick}(\mathbf{H})=\{F\}$ and $\operatorname{Thin}(\mathbf{H})$ consists of $\partial M$.
Let $V$ and $W$ be disjoint compressing disks of $F$ from the opposite sides of $F$ such that ${F}_{VW}$ has no $2$-sphere component. 
(Lemma \ref{lemma-2-7} guarantees that ${F}_{VW}$ will not have a $2$-sphere component in the proof of Theorem \ref{theorem-1-1}.)
Define
$$T(\mathbf{G'})=(\operatorname{Thick}(\mathbf{H})-\{F\})\cup\{{F}_{V}, {F}_{W}\}, \text{ and}$$
$$t(\mathbf{G'})=\operatorname{Thin}(\mathbf{H})\cup\{{F}_{VW}\},$$
where we assume that each element of $T(\mathbf{G'})$ belongs to the interior of $\V$ or $\W$ by slightly pushing off $F_V$ or $F_W$ into the interior of $\V$ or $\W$ respectively and then also assume that they miss $F_{VW}$.
We say the pseudo-GHS $\mathbf{\mathbf{G'}}=\{T(\mathbf{G'}),t(\mathbf{G'})\}$ is obtained from $\mathbf{H}$ by \textit{pre-weak reduction} along $(V,W)$.
Hence, the relative position of the elements of $T(\mathbf{G'})$ and $t(\mathbf{G'})$ follows the order described in Figure \ref{figure2}.
We can imagine that the compressing disk $V$ ($W$ resp.) of $F$ would become a compressing disk of ${F}_{W}$ (${F}_{V}$ resp.) in the solid between ${F}_W$ (${F}_V$ resp.) and ${F}_{VW}$  after the pre-weak reduction by slightly extending $V$ ($W$ resp.) as the dashed line in the right of Figure \ref{figure2}.
\begin{figure}
\includegraphics[width=10cm]{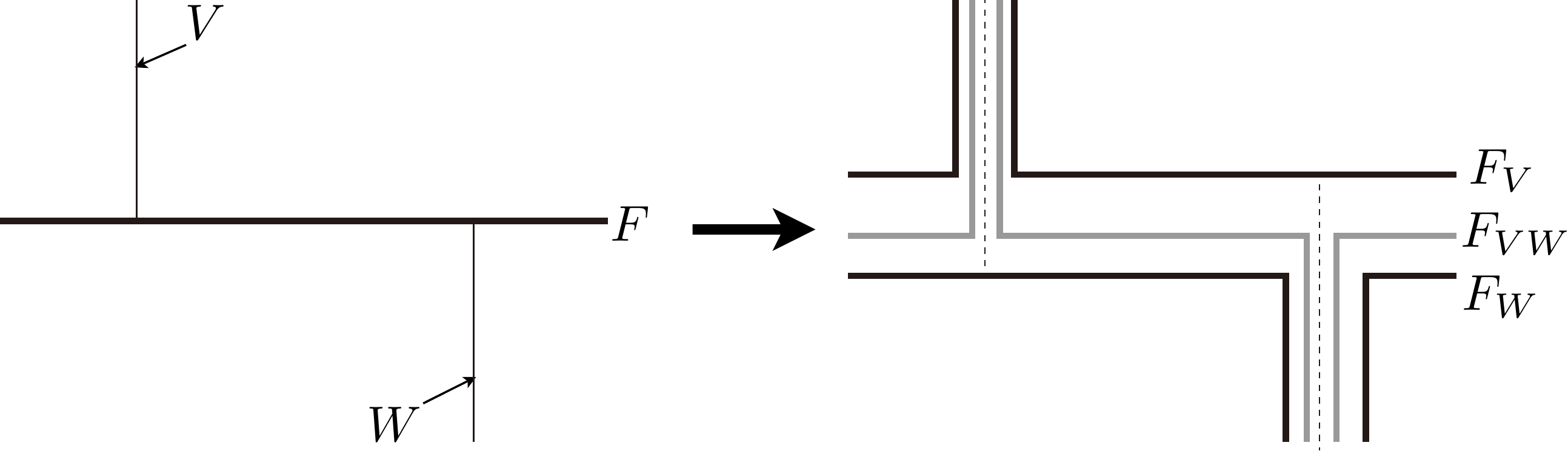}
\caption{pre-weak reduction \label{figure2}}
\end{figure}
If there are elements $S\in T(\mathbf{G'})$ and $s\in t(\mathbf{G'})$ that cobound a product region $P$ of $M$ such that $P\cap T(\mathbf{G'})=S$ and $P\cap t(\mathbf{G'})=s$, then remove $S$ from $T(\mathbf{G'})$ and $s$ from $t(\mathbf{G'})$.
This gives a GHS $\mathbf{G}$ of $M$ from the pseudo-GHS $\mathbf{G'}$ (see Lemma 5.4 of \cite{2}) and we say $\mathbf{G}$ is obtained from $\mathbf{G'}$ by \textit{cleaning}.
We say the GHS $\mathbf{G}$ of $M$ given by pre-weak reduction along $(V,W)$, followed by cleaning, is obtained from $\mathbf{H}$ by \textit{weak reduction} along $(V,W)$.
\end{definition}
We can refer to Figure \ref{figure6} as an example of a weak reduction from a genus three, weakly reducible Heegaard splitting $(\V,\W;F)$ along a weak reducing pair $(V,W)$ when $V$ cuts off $(\text{torus})\times I$ from $\V$ and $W$ cuts off a solid torus from $\W$. 

\begin{definition}[J. Kim, Definition 2.12 of \cite{10}]
In a weak reducing pair for a Heegaard splitting $(\V,\W;F)$, if a disk belongs to $\V$, then we call it a \emph{$\V$-disk}.
Otherwise, we call it a \emph{$\W$-disk}.	
We call a $2$-simplex in $\DVW(F)$ represented by two vertices in $\DV(F)$ and one vertex in $\DW(F)$ a \textit{$\V$-face}, and also define a \textit{$\W$-face} symmetrically.
Let us consider a $1$-dimensional graph as follows.
\begin{enumerate}
\item We assign a vertex to each $\V$-face in $\DVW(F)$.
\item If a $\V$-face shares a weak reducing pair with another $\V$-face, then we assign an edge between these two vertices in the graph.
\end{enumerate}
We call this graph the \emph{graph of $\V$-faces}.
If there is a maximal subset $\varepsilon_\V$ of $\V$-faces in $\DVW(F)$ representing a connected component of the graph of $\V$-faces and the component is not an isolated vertex, then we call $\varepsilon_\V$ a \emph{$\V$-facial cluster}.
Similarly, we define the \emph{graph of $\W$-faces} and  a \textit{$\W$-facial cluster}.
In a $\V$-facial cluster, every weak reducing pair gives the common $\W$-disk, and vise versa.
\end{definition}

If we consider an unstabilized, genus three Heegaard splitting of an irreducible $3$-manifold, then we get the following lemmas.

\begin{lemma}[J. Kim, Lemma 2.13 of \cite{10}]\label{lemma-2-13}
Suppose that $M$ is an irreducible $3$-manifold and $(\V,\W;F)$ is an unstabilized, genus three Heegaard splitting of $M$.
If there are two $\V$-faces $f_1$ represented by $\{V_0,V_1,W\}$ and $f_2$ represented by $\{V_1, V_2, W\}$ sharing a weak reducing pair $(V_1,W)$, then $\partial V_1$ is non-separating, and $\partial V_0$, $\partial V_2$ are separating in $F$.
Therefore, there is a unique weak reducing pair in a $\V$-facial cluster which can belong to two or more faces in the $\V$-facial cluster.
\end{lemma}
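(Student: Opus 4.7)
The plan is to prove the main assertion by contradiction, ruling out the possibility that $\partial V_1$ is separating in $F$, and then derive the uniqueness statement as an immediate consequence using connectedness of the $\V$-facial cluster together with Lemma~\ref{lemma-2-8}. The key step will be showing that if $\partial V_1$ were separating, then $V_0$ and $V_2$ would be forced to co-exist as compressing disks of a solid torus cut off from $\V$ by $V_1$, which would make them isotopic and contradict the distinctness of $f_1$ and $f_2$.

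Suppose for contradiction that $\partial V_1$ is separating in $F$, bounding a punctured torus $T_1\subset F$. I apply Lemma~\ref{lemma-2-8} to the mutually disjoint triples $\{V_0,V_1,W\}$ and $\{V_1,V_2,W\}$, using that $V_0\not\sim V_1$ and $V_1\not\sim V_2$ (the vertices of a simplex are distinct), to conclude that $\partial V_0$ and $\partial V_2$ are non-separating simple closed curves lying in $T_1$. Compressing $F$ along $V_1$ then yields $F_{V_1}=T\sqcup\Sigma$ with $T$ a torus (from $T_1$) and $\Sigma$ a closed genus-two surface. Since the positive boundary of any compression body is connected by definition, the disk $V_1$ must separate $\V$ into two compression bodies $\V_T$ and $\V_\Sigma$ with $\partial_+\V_T=T$ and $\partial_+\V_\Sigma=\Sigma$. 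Because $V_0$ and $V_2$ are disjoint from $V_1$ in $\V$ and have boundaries in $T_1$, both lie in $\V_T$, and each is a compressing disk of $\V_T$ (its non-separating boundary does not bound on the torus $T$). Hence $\V_T$ cannot be $T^2\times I$, so $\V_T$ is a solid torus. But all meridian disks of a solid torus are isotopic, so $V_0\sim V_2$ in $\V_T\subset\V$, contradicting $V_0\ne V_2$ in $\DVW(F)$ (which holds because $f_1\ne f_2$). Therefore $\partial V_1$ is non-separating, and applying Lemma~\ref{lemma-2-8} once more to the non-isotopic pairs $(V_0,V_1)$ and $(V_1,V_2)$ forces both $\partial V_0$ and $\partial V_2$ to be separating (each bounding a punctured torus in $F$ that contains $\partial V_1$ as a non-separating loop).

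For the uniqueness statement, observe first that any two $\V$-faces sharing a weak reducing pair must share their $\W$-disk, so all $\V$-faces in a given $\V$-facial cluster share one common $\W$-disk $W$. Moreover, by Lemma~\ref{lemma-2-8} each $\V$-face of the cluster contains exactly one non-separating $\V$-disk and one separating $\V$-disk. By the main assertion just proved, any weak reducing pair belonging to two or more $\V$-faces of the cluster must have a non-separating $\V$-disk, so whenever two faces of the cluster are joined by an edge of the graph of $\V$-faces they share their (unique) non-separating $\V$-disk. Connectedness of the cluster then propagates this identification to all its faces, so they all share a common non-separating $\V$-disk $V^*$, and hence $(V^*,W)$ is the unique weak reducing pair in the cluster that belongs to two or more of its faces.
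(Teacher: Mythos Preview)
The paper does not supply its own proof of this lemma; it is quoted from \cite{10} without argument. So there is no in-paper proof to compare against. That said, your proof is correct and is exactly the natural argument one expects from Lemma~\ref{lemma-2-8}: if $\partial V_1$ were separating, both $V_0$ and $V_2$ would be forced to be meridians of the solid torus that $V_1$ cuts off from $\V$, hence isotopic, contradicting $f_1\neq f_2$; once $\partial V_1$ is non-separating, Lemma~\ref{lemma-2-8} forces $\partial V_0,\partial V_2$ to be separating, and the uniqueness of the center follows by propagating the common non-separating $\V$-disk through the connected graph of $\V$-faces. This is precisely the picture the paper itself invokes immediately afterward in Definition~\ref{definition-2-14}, where it notes that the $\V$-disk of a hand cuts off a solid torus whose meridian is the $\V$-disk of the center.

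One small remark on exposition: the sentence ``Since the positive boundary of any compression body is connected by definition, the disk $V_1$ must separate $\V$ into two compression bodies'' tacitly uses the standard fact that cutting a compression body along a compressing disk yields compression bodies; the connectedness of $\partial_+$ then forces the cut to be separating when $\partial V_1$ separates $F$. This is routine, but you might cite it explicitly rather than phrasing it as if the separation follows directly from the definition.
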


\begin{definition}[J. Kim, Definition 2.14 of \cite{10}]\label{definition-2-14}
By Lemma \ref{lemma-2-13}, there is a unique weak reducing pair in a $\V$-facial cluster belonging to two or more faces in the $\V$-facial cluster.
We call it the \textit{center} of a $\V$-facial cluster.
We call the other weak reducing pairs \textit{hands} of a $\V$-facial cluster.
See Figure \ref{figure3}.
Note that if a $\V$-face in a $\V$-facial cluster is represented by two weak reducing pairs, then one is the center and the other is a hand.
Lemma \ref{lemma-2-13} means that the $\V$-disk in the center of a $\V$-facial cluster is non-separating, and those from hands are all separating.
Moreover, Lemma \ref{lemma-2-8} implies that (i) the $\V$-disk in a hand of a $\V$-facial cluster is a band-sum of two parallel copies of that of the center of the $\V$-facial cluster and (ii) the $\V$-disk of a hand of a $\V$-facial cluster determines that of the center of the $\V$-facial cluster by the uniqueness of the meridian disk of the solid torus which the $\V$-disk of the hand cuts off from $\V$.
\end{definition}
	
Note that every $\V$ - or $\W$- facial cluster is contractible in $\D(F)$ (see Figure \ref{figure3}).

\begin{figure}
\includegraphics[width=4.5cm]{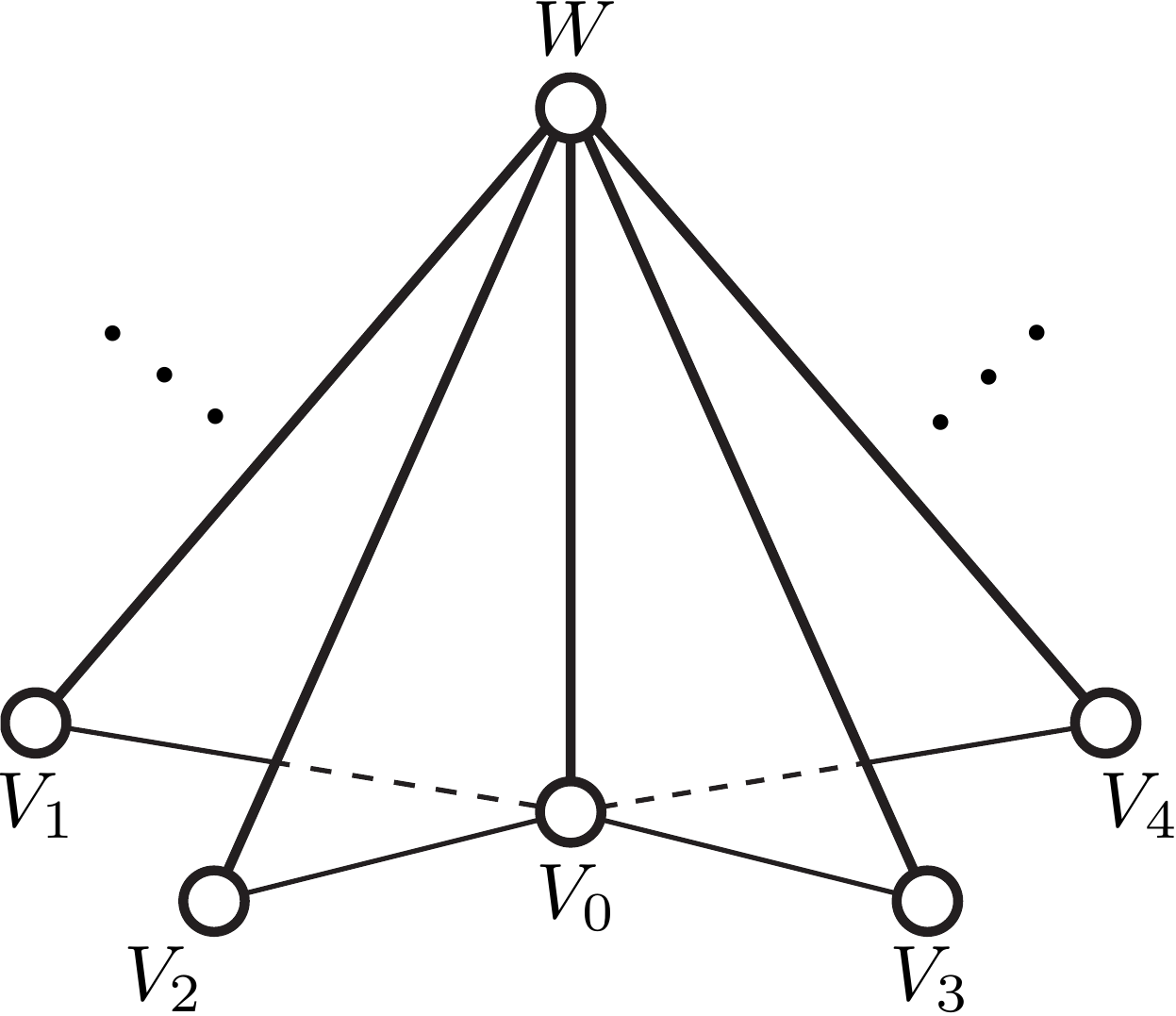}
\caption{An example of a $\V$-facial cluster in $\DVW(F)$. $(V_0, W)$ is the center and the other weak reducing pairs are hands. \label{figure3}}
\end{figure}

\begin{lemma}[J. Kim, Lemma 2.15 of \cite{10}]\label{lemma-2-15} 
Assume $M$ and $F$ as in Lemma \ref{lemma-2-13}.
Every $\V$-face belongs to some $\V$-facial cluster. 
Moreover, every $\V$-facial cluster has infinitely many hands.
\end{lemma}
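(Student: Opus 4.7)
The plan is to deduce both assertions at once by proving that for any $\V$-face $\sigma = \{V_0,V_1,W\}$ there exist infinitely many separating compressing disks $V_i\subset\V$ such that $\{V_1,V_i,W\}$ is also a $\V$-face. Once this is established, $\sigma$ is non-isolated in the graph of $\V$-faces, so it lies in a $\V$-facial cluster, and that cluster has center $(V_1,W)$ together with infinitely many distinct hands $(V_i,W)$.

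First I would apply Lemma \ref{lemma-2-8}: since $V_0$ and $V_1$ are non-isotopic and both disjoint from $W$, one of $\partial V_0,\partial V_1$ is separating and bounds a punctured torus in $F$ containing the other, which is non-separating. After relabeling, assume $\partial V_1$ is non-separating. The candidate disks $V_i$ will be band-sums of two parallel copies of $V_1$ along bands in $F$ whose cores are arcs disjoint from $\partial W$; equivalently, on the closed genus-$2$ surface $F_{V_1}$ obtained by compressing $F$ along $V_1$, each $V_i$ corresponds to an isotopy class of arc joining the two scars of $V_1$ and disjoint from $\partial W$. Any such arc produces a separating compressing disk in $\V$ whose boundary is disjoint from both $\partial V_1$ and $\partial W$, hence a disk in $\V$ disjoint from both $V_1$ and $W$.

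The hardest step will be producing infinitely many such arc classes. The given $V_0$ already supplies one arc $\alpha_0$, so the two scars lie in a common component $C$ of $F_{V_1}\setminus\partial W$; I would then argue that $C$, with the two scar disks removed, has positive genus, from which infinitely many pairwise non-isotopic arcs between the two scar boundaries follow by a standard surface topology count. The one threat is the case in which $C$ has genus $0$: this happens precisely when $\partial W$ bounds a disk in $F_{V_1}$ containing both scars, equivalently when $\partial W$ bounds a once-punctured torus $T_W$ in $F$ containing $\partial V_1$. In that case $\partial V_0$ also bounds a once-punctured torus $T_0$ in $F$ containing $\partial V_1$, and since $\partial V_0\cap\partial W=\emptyset$, a short Euler-characteristic computation (treating the subcases $T_0\subset T_W$ and $\partial W\subset T_0$) forces $\partial V_0$ and $\partial W$ to cobound an annulus in $F$. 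Then $(V_0,W)$ is a reducing pair, which is excluded because $M$ is irreducible and $(\V,\W;F)$ is unstabilized.

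To finish, each of the infinitely many arcs produces a separating disk $V_i$, distinct from $V_1$ since $V_1$ is non-separating, and the disks are pairwise non-isotopic because the corresponding band-sum boundary curves in $F$ are pairwise non-isotopic. Hence $\{V_1,V_i,W\}$ gives infinitely many distinct $\V$-faces all containing the weak reducing pair $(V_1,W)$. This shows both that $\sigma$ sits in a $\V$-facial cluster and, combined with Lemma \ref{lemma-2-13} (every cluster has a non-separating center), that every $\V$-facial cluster has infinitely many hands.
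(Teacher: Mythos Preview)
The paper does not supply its own proof of this lemma; it is simply quoted from \cite{10}. Your argument is correct and is essentially the natural one: pass to the genus-two surface $F_{V_1}$, identify band-sums of two parallel copies of $V_1$ with arcs joining the two scars, and observe that the component $C$ of $F_{V_1}\setminus\partial W$ containing both scars has positive genus, so there are infinitely many such arc classes.

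One simplification is available. The genus-$0$ case you analyze can be ruled out directly by Lemma~\ref{lemma-2-7}, without the Euler-characteristic detour through $T_0$ and $T_W$. If $\partial W$ bounded a disk in $F_{V_1}$ containing both scars of $V_1$, then compressing $F_{V_1}$ along $W$ would produce a $2$-sphere component of $F_{V_1W}$ carrying scars of both $V_1$ and $W$, contradicting Lemma~\ref{lemma-2-7}. Hence $\partial W$ is always essential in the genus-two surface $F_{V_1}$, and in either the separating or non-separating case the component $C$ has genus exactly one. Your reducing-pair argument for this case is also valid, just longer than necessary.
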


The following lemma means that the generalized Heegaard splitting obtained by weak reduction along a weak reducing pair does not depend on the choice of the weak reducing pair if the weak reducing pair varies in a fixed $\V$- or $\W$-facial cluster.

\begin{lemma}[J. Kim, Lemma 2.17 of \cite{11}]\label{lemma-2-16}
Assume $M$ and $F$ as in Lemma \ref{lemma-2-13}.
Every weak reducing pair in a $\V$-face gives the same generalized Heegaard splitting after weak reduction up to isotopy.
Therefore, every weak reducing pair in a $\V$-facial cluster gives the same generalized Heegaard splitting after weak reduction up to isotopy.
Moreover, the embedding of the thick level contained in $\V$ or $\W$ does not vary in the relevant compression body up to isotopy. 
\end{lemma}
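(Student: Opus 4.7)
The plan is to first treat the two weak reducing pairs within a single $\V$-face $\{V_0,V_1,W\}$, and then extend to an arbitrary $\V$-facial cluster by transitivity: any two weak reducing pairs in such a cluster are joined by a chain of $\V$-faces in which consecutive faces share a weak reducing pair (Lemma \ref{lemma-2-13}).

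For the single-face step, I would first apply Lemma \ref{lemma-2-8} to describe the topology of $\{V_0,V_1,W\}$: after relabeling, $\partial V_0$ bounds a punctured torus $T\subset F$ with complement $T'$, and $\partial V_1$ is a non-separating loop in $T$. The region $R_1\subset\V$ cut off by $V_0$ is a compression body with $\partial_+R_1=T\cup V_0$ a torus and admits $V_1$ as a compressing disk, so $R_1$ is a solid torus with $V_1$ a meridian. A short argument forces $\partial W\subset T'$: any essential simple closed curve in $T$ disjoint from $\partial V_1$ is parallel to $\partial V_0$ or to $\partial V_1$, which would create a reducing pair contradicting the hypotheses on $M$ and the splitting.

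With this setup, I compute both pre-GHSs. Along $(V_0,W)$, $F_{V_0}=\Sigma_1\cup\Sigma_2$ with $\Sigma_1=T\cup V_0$ a torus and $\Sigma_2=T'\cup V_0$ closed of genus two, and $F_{V_0W}=\Sigma_1\cup(\Sigma_2)_W$ since $\partial W\subset T'$ leaves the torus side untouched; the two copies of $\Sigma_1$ cobound a collar product region in $R_1$, so cleaning yields $\Thick=\{\Sigma_2,F_W\}$ and $\Thin=\{(\Sigma_2)_W,\partial M\}$. Along $(V_1,W)$, cutting $R_1$ along its meridian $V_1$ produces a ball $B$, and writing $D$ for the disk $\partial B\setminus V_0$, an inspection shows $D$ coincides (as a subsurface of $M$) with the $V_1$-surgered $T$, so that $F_{V_1}=T'\cup D$ and $F_{V_1W}=T'_W\cup D$, with no cleaning needed on the $\V$ side since thick and thin components of different genera cannot cobound a product.

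The proof concludes with the ball-swap isotopy: since $V_0$ and $D$ are disks on $\partial B$ with $V_0\cup D=\partial B$ and $\partial V_0=\partial D$, they are isotopic rel boundary inside $B$, and this extends to an ambient isotopy $\Phi$ of $M$ supported in $B\subset\V$ sending $V_0$ to $D$. Under $\Phi$, $\Sigma_2=T'\cup V_0\mapsto T'\cup D=F_{V_1}$ and $(\Sigma_2)_W=T'_W\cup V_0\mapsto T'_W\cup D=F_{V_1W}$, while $F_W$ and $\partial M$ are fixed, so $\Phi$ identifies the cleaned GHS from $(V_0,W)$ with the one from $(V_1,W)$. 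Since $\Phi$ is supported in $\V$ and acts as the identity on $\W$, the thick levels in $\V$ and $\W$ have unique embeddings in their respective compression bodies up to isotopy. Extending to a $\V$-facial cluster is then formal: compose the isotopies along a chain of $\V$-faces. I expect the main technical obstacle to be the precise geometric identification of $D$ with the $V_1$-surgered $T$ (and the compatibility with the various pushoff conventions in the pre-weak reduction), which is exactly what allows a single isotopy of $B$ to move the thick and thin levels simultaneously in a way compatible with Bachman's cleaning procedure.
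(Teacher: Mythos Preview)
Your proposal is correct and follows essentially the same approach as the paper's explanation (given in the paragraph following the lemma statement; the lemma itself is cited from \cite{11} and not re-proved here). Both arguments use Lemma~\ref{lemma-2-8} to identify the separating $\V$-disk as cutting off a solid torus with the non-separating $\V$-disk as meridian, and then observe that compressing $F$ along the non-separating disk yields a surface isotopic in $\V$ to the genus-two component of the compression along the separating disk; your explicit ball-swap isotopy is precisely the geometric content behind the paper's one-line assertion that ``since $V$ is a band-sum of two parallel copies of $\bar V$, it is easy to see that $F_{\bar V}$ is isotopic to the genus two component of $F_V$.''
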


Here, we give an easy explanation of Lemma \ref{lemma-2-16}.
If we consider two $\V$-disks in the $\V$-face, then one is a meridian disk of the solid torus which the other cuts off from $\V$ by Lemma \ref{lemma-2-8} (see (a) of Figure \ref{figure4}).
We denote the former and the latter as $\bar{V}$ and $V$ respectively.
Since $V$ is a band-sum of two parallel copies of $\bar{V}$ in $\V$, it is easy to see that $F_{\bar{V}}$ is isotopic to the genus two component of $F_V$ in $\V$, say $\bar{F}_V$, when we push off them into the interior of $\V$ (see (b) of Figure \ref{figure4}).
\begin{figure}
\includegraphics[width=12cm]{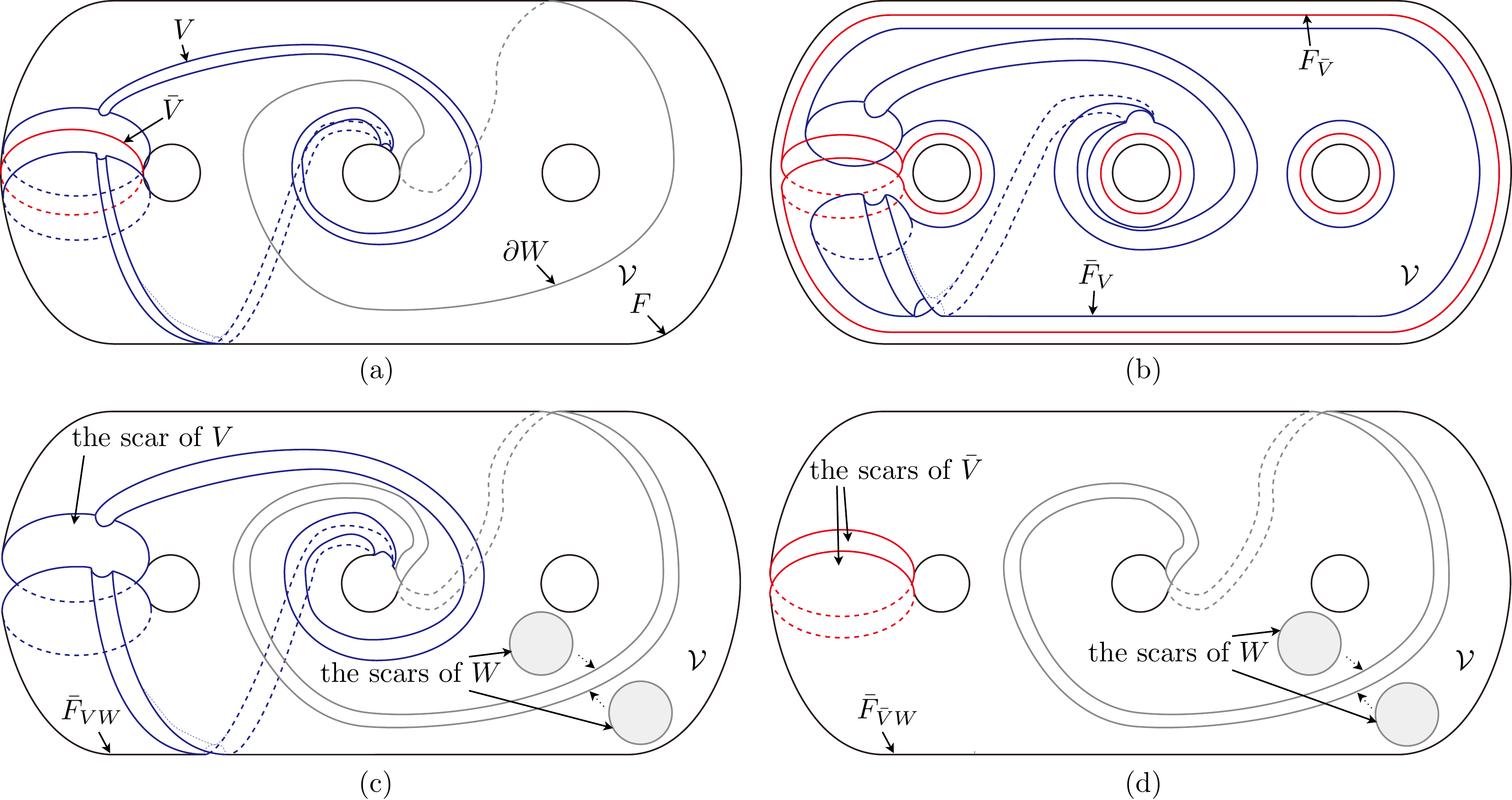}
\caption{$F_{\bar{V}}$ is isotopic to the genus two component of $F_{V}$ in the interior of $\V$. \label{figure4}}
\end{figure}
Moreover, we will see that the thick level contained in $\V$ of the generalized Heegaard splitting obtained by weak reduction from $(\V,\W;F)$ along a weak reducing pair would come from the the genus two component of the one obtained by compressing $F$ along the $\V$-disk of the weak reducing pair in Lemma \ref{lemma-3-1}.
Hence, this describes the third statement of Lemma \ref{lemma-2-16}.
In addition, if we consider that the inner thin level comes from the component of $F_{VW}$ having scars of both $V$ and $W$ for a weak reducing pair $(V,W)$ from Section \ref{section3}, say $\bar{F}_{VW}$, then we can see that the inner thin levels $\bar{F}_{VW}$ and $\bar{F}_{\bar{V}W}$ corresponding to two weak reducing pairs of the $\V$-face are also isotopic in $M$.

The next lemma gives an upper bound for the dimension of $\DVW(F)$ and restricts the shape of a $3$-simplex in $\DVW(F)$.

\begin{lemma}[J. Kim, Proposition 2.10 of \cite{9}]\label{lemma-2-17}
Assume $M$ and $F$ as in Lemma \ref{lemma-2-13}.
Then $\operatorname{dim}(\DVW(F))\leq 3$.
Moreover, if $\operatorname{dim}(\DVW(F))=3$, then every $3$-simplex in $\DVW(F)$ must have the form $\{V_1, V_2, W_1, W_2\}$, where $V_1, V_2\subset \V$ and $W_1,W_2\subset \W$.
Indeed, $V_1$ ($W_1$ resp.) is non-separating in $\V$ (in $\W$ resp.) and $V_2$ ($W_2$ resp.) is a band sum of two parallel copies of $V_1$ in $\V$ ($W_1$ in $\W$ resp.).
\end{lemma}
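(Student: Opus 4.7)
The plan is to deduce both halves from iterated use of Lemma \ref{lemma-2-8}. Any simplex in $\DVW(F)$ is represented by a collection of pairwise disjoint, pairwise non-isotopic compressing disks. The ``moreover'' clause of Lemma \ref{lemma-2-8} says that no three pairwise disjoint, pairwise non-isotopic $\V$-disks can be simultaneously disjoint from a single $\W$-disk. Hence every simplex of $\DVW(F)$ contains at most two $\V$-vertices and, symmetrically, at most two $\W$-vertices. A simplex of dimension $\geq 4$ would have at least five vertices, hence at least three on one side, which is forbidden; therefore $\operatorname{dim}(\DVW(F))\leq 3$, and any $3$-simplex must take the form $\{V_1,V_2,W_1,W_2\}$ with $V_1,V_2\in\DV(F)$ and $W_1,W_2\in\DW(F)$.

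For the structural claim I focus on the $\V$-side; the $\W$-side is symmetric. Applying Lemma \ref{lemma-2-8} to the $2$-simplex $\{V_1,V_2,W_1\}$, and using that $V_1\not\simeq V_2$, I may relabel so that $\partial V_2$ separates off a once-punctured torus $T\subset F$ and $\partial V_1$ is a non-separating loop in $T$. Since $T$ is connected, $\partial V_1$ is non-separating in $F$, and hence $V_1$ is non-separating in $\V$. Because $\partial V_2$ is separating in $F=\partial_+\V$, the disk $V_2$ separates $\V$; let $P$ be the component of the split whose $\partial_+$, obtained by closing $T$ off with the nearby scar of $V_2$, is a closed torus. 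The disk $V_1$ is disjoint from $V_2$ and has boundary in $T$, so $V_1\subset P$ is an essential compressing disk of $\partial_+P$.

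Compression bodies with $\partial_+$ a torus are classified as $T^2\times I$ or a solid torus. The former admits no compressing disk of $\partial_+$, and $V_1$ is such a disk, so $P$ is a solid torus and $V_1$ is its unique meridian up to isotopy. The disk $V_2$ now appears as a disk on $\partial P$ with complement the punctured torus $T$; pushed off slightly into $P$, it becomes a boundary-parallel properly embedded disk. On the other hand, the band sum in $P$ of two parallel copies of the meridian $V_1$, performed through an appropriate band on $T\subset\partial P$, is also a properly embedded disk in $P$ whose boundary is isotopic to $\partial V_2$ on $\partial P$. Both disks are boundary-parallel, and a boundary-parallel disk in the solid torus $P$ is determined up to isotopy by its boundary, so $V_2$ is isotopic in $\V$ to the band sum of two parallel copies of $V_1$. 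The symmetric argument applied to $\{W_1,W_2,V_1\}$ produces the analogous statement for the $\W$-side. The main technical hurdle is the analysis inside $P$: classifying $P$ and exhibiting $V_2$ as a band sum, both of which hinge on the standard structure theory of compression bodies with torus $\partial_+$ and of disks in a solid torus.
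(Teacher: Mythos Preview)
Your argument is correct and follows the same route the paper indicates: the paper does not reprove this lemma but cites \cite{9} and remarks that the third statement follows by applying Lemma~\ref{lemma-2-8} to the $\V$-face $\{V_1,V_2,W_1\}$ and the $\W$-face $\{V_2,W_1,W_2\}$, which is precisely what you do (and Definition~\ref{definition-2-14} records the same band-sum conclusion). Your extra detail---identifying $P$ as a solid torus via the classification of compression bodies with torus $\partial_+$, and then matching $V_2$ to a band sum inside $P$---is a clean way to unpack what the paper leaves implicit.
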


Note that the third statement of Lemma \ref{lemma-2-17} is obtained by applying Lemma \ref{lemma-2-8} to the $\V$-face $\{V_1,V_2,W_1\}$ and the $\W$-face $\{V_2,W_1,W_2\}$.

\section{The proof of Theorem \ref{theorem-1-2}\label{section3}}
In this section, we will prove Theorem \ref{theorem-1-2}.

The next lemma characterizes the possible generalized Heegaard splittings obtained by weak reductions from $(\V,\W;F)$ into five types. 
The proof is a routine exercise applying Definition \ref{definition-2-11} to all possible cases by considering Lemma \ref{lemma-2-7}.
That is, if there is a weak reducing pair $(V,W)$, then $\partial W$ is an essential simple closed curve in the genus two component of $F_V$ as in (a) of Figure \ref{figure4} and vise versa by Lemma \ref{lemma-2-7}.
Hence, we can easily generalize the way how $\partial V$ and $\partial W$ locate in $F$ and therefore we can do the pre-weak reduction along $(V,W)$ and cleaning for each case of Lemma \ref{lemma-3-1}.
We can use the diagrams in Figure 5 of \cite{11} or Figure \ref{figure6} to imagine possible weak reductions.

\begin{lemma}\label{lemma-3-1}
Suppose that $M$ is an irreducible $3$-manifold and $(\V,\W;F)$ is a weakly reducible, unstabilized, genus three   Heegaard splitting of $M$.
Let $(\V_1,\V_2;T_1)\cup(\W_1,\W_2;T_2)$ be the generalized Heegaard splitting obtained by weak reduction along a weak reducing pair $(V,W)$, where $\partial_-\V_2\cap \partial_-\W_1\neq\emptyset$.
Then this generalized Heegaard splitting is one of the following five types (see Figure \ref{figure5}).
\begin{enumerate}[(a)]
\item Each of $\partial_-\V_2$ and $\partial_-\W_1$ consists of a torus, where either\label{GHS-a}
	\begin{enumerate}[(i)]
	\item both $V$ and $W$ are non-separating in $\V$ and $\W$ respectively and $\partial V\cup\partial W$ is also non-separating in $F$,
	\item $V$ cuts off a solid torus from $\V$ and $W$ is non-separating in $\W$,
	\item $W$ cuts off a solid torus from $\W$ and $V$ is non-separating in $\V$, or
	\item each of $V$ and $W$ cuts off a solid torus from $\V$ or $\W$.
	\end{enumerate}
	We call it a ``\textit{type (a) GHS}''.
\item One of $\partial_-\V_2$ and $\partial_-\W_1$ consists of a torus and the other consists of two tori, where either\label{lemma-3-1-b}
	\begin{enumerate}[(i)]
	\item $V$ cuts off $(\text{torus})\times I$ from $\V$ and $W$ is non-separating in $\W$,\label{lemma-3-1-b-i}
	\item $V$ cuts off $(\text{torus})\times I$ from $\V$ and $W$ cuts off a solid torus from $\W$,\label{lemma-3-1-b-ii}
	\item $W$ cuts off $(\text{torus})\times I$ from $\W$ and $V$ is non-separating in $\V$, or\label{lemma-3-1-b-iii}
	\item $W$ cuts off $(\text{torus})\times I$ from $\W$ and $V$ cuts off a solid torus from $\V$.\label{lemma-3-1-b-iv}
	\end{enumerate}	
	We call it a ``\textit{type (b)-$\W$ GHS}'' for (\ref{lemma-3-1-b-i}) and (\ref{lemma-3-1-b-ii}) and ``\textit{type (b)-$\V$ GHS}'' for (\ref{lemma-3-1-b-iii}) and (\ref{lemma-3-1-b-iv}).
\item Each of $\partial_-\V_2$ and $\partial_-\W_1$ consists of two tori but $\partial_-\V_2\cap \partial_-\W_1$ is a torus, where each of $V$ and $W$ cuts off $(\text{torus})\times I$ from $\V$ or $\W$.\label{lemma-3-1-c}
We call it a ``\textit{type (c) GHS}''.
\item Each of $\partial_-\V_2$ and $\partial_-\W_1$ consists of two tori and $\partial_-\V_2\cap \partial_-\W_1$ also consists of two tori, where both $V$ and $W$ are non-separating in $\V$ and $\W$ respectively but $\partial V\cup\partial W$ is separating in $F$.\label{lemma-3-1-d}
We call it a ``\textit{type (d) GHS}''.
\end{enumerate}
\end{lemma}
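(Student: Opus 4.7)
The plan is a case analysis on how the weak reducing disk $V$ sits inside $\V$ and how $W$ sits inside $\W$, followed by a computation of the pseudo-GHS from Definition \ref{definition-2-11} and of the cleaning step, and a match of the resulting GHS against the list (a)--(d).

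First I would classify the topological type of each disk. Since $F$ has genus three, a compressing disk $V$ for $F$ in $\V$ falls into one of three types: (i) $V$ is non-separating in $\V$; (ii) $V$ separates $\V$ and the small side is a solid torus; (iii) $V$ separates $\V$ and the small side is $(\text{torus})\times I$, which requires $\V$ to have a torus component of $\partial_-\V$ forming the other boundary. The same trichotomy holds for $W\subset\W$, yielding nine combined cases. In each case $F_V$ is either a connected genus-two surface (type (i)) or the disjoint union of a torus and a genus-two surface (types (ii) and (iii)), and analogously for $F_W$.

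Next I would compute $F_{VW}$. Since $V$ and $W$ are disjoint, $W$ remains a compressing disk for the genus-two component of $F_V$, and compressing that component along $W$ produces a surface of Euler characteristic $-2$ that is one of: a connected genus-one surface, two tori, or a torus disjoint from a sphere. Lemma \ref{lemma-2-7} rules out the sphere. Which of the remaining two outcomes occurs is determined by whether $\partial V\cup\partial W$ is non-separating or separating in $F$. Together with the torus components of $F_V$ and $F_W$ that are already present in types (ii) and (iii), this determines the whole of $T(\mathbf{G}')\cup t(\mathbf{G}')$ in each subcase.

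Finally I would apply cleaning. A torus component of $F_V$ arising from a type-(iii) $V$ cobounds a $(\text{torus})\times I$ with the corresponding component of $\partial_-\V\subset\partial M$ in $t(\mathbf{G}')$ and is cancelled against it; a torus component of $F_V$ arising from a type-(ii) $V$ cobounds a solid torus cut by $V$ and is cancelled against an appropriate torus of $F_{VW}$ precisely when $W$ does not meet that solid torus. After cleaning, the thick levels $T_1$ and $T_2$ are the genus-two components of $F_V$ and $F_W$, and reading off $\partial_-\V_2$ and $\partial_-\W_1$ reduces to counting their torus components and their intersection; this count then matches exactly one of (a), (b)-$\V$, (b)-$\W$, (c), or (d). The main obstacle is the bookkeeping across the nine subcases and, within each, identifying precisely which product regions must be cancelled; Figure 5 of \cite{11} and the schematic pictures referenced before the statement make each verification transparent.
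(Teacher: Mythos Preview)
Your plan is exactly the paper's own argument: the paper gives no formal proof but only the one–paragraph sketch preceding the statement, saying that the result is ``a routine exercise applying Definition~\ref{definition-2-11} to all possible cases by considering Lemma~\ref{lemma-2-7}'' and pointing to Figure~5 of \cite{11} and Figure~\ref{figure6}. Your trichotomy on $V$ and on $W$, the use of Lemma~\ref{lemma-2-7} to force $\partial W$ into the genus-two component of $F_V$ and to kill any sphere in $F_{VW}$, and the subsequent cleaning are precisely what that sketch is asking you to carry out.

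One detail to correct in your cleaning bookkeeping: for a type-(iii) disk $V$ (the case $V$ cuts off $(\text{torus})\times I$), the torus component $T_1$ of $F_V$ should be cancelled against the component $S_1$ of $F_{VW}$ carrying only the $V$-scar, \emph{not} against the torus $\tau\subset\partial_-\V\subset\partial M$. If you remove $\tau$ from $t(\mathbf{G}')$ you leave a $(\text{torus})\times I$ region bounded by $S_1$ with no thick surface inside, so the result is not a GHS. More to the point, $\tau$ must survive as a thin level: it is exactly this extra torus, appearing in $\partial_-\W_1$, that makes cases (b)(i) and (b)(ii) type~(b)-$\W$ rather than type~(a). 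With that adjustment your case analysis goes through and matches the list.
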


\begin{figure}
\includegraphics[width=12cm]{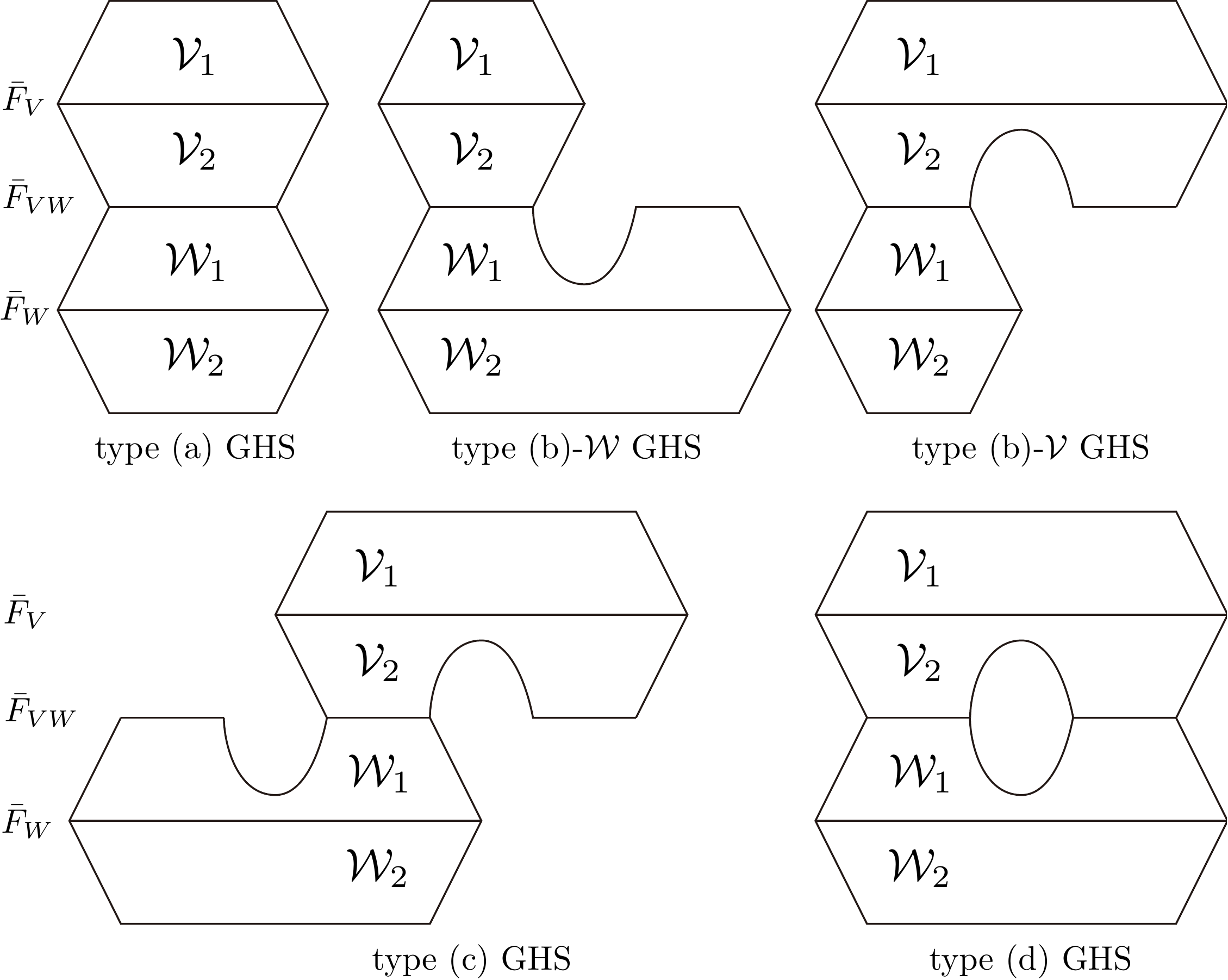}
\caption{the five types of generalized Heegaard splittings \label{figure5}}
\end{figure}

In Figure \ref{figure5}, the thick level $\bar{F}_V$ ($\bar{F}_W$ resp.) comes from the genus two component of $F_V$ ($F_W$ resp.) and the inner thin level $\bar{F}_{VW}$ comes from the component of $F_{VW}$ having scars of both $V$ and $W$ or the union of such components.
Here, $\bar{F}_{VW}$ consists of (i) a torus for the cases (\ref{GHS-a}), (\ref{lemma-3-1-b}) and (\ref{lemma-3-1-c}) of Lemma \ref{lemma-3-1} or (ii) two tori for the case (\ref{lemma-3-1-d}) of Lemma \ref{lemma-3-1}.
If we observe Figure \ref{figure6} describing the case (\ref{lemma-3-1-b-ii}), then we can see the reason why the inner thin level comes from the component of $F_{VW}$ having scars of both $V$ and $W$ or the union of such components.
By considering Definition \ref{definition-2-11}, we call $\bar{F}_V$ and $\bar{F}_W$ ``\textit{the thick level contained in $\V$}'' and  ``\textit{the thick level contained in $\W$}'' respectively as we previously used in the statement of Lemma \ref{lemma-2-16}.

\begin{figure}
\includegraphics[width=12cm]{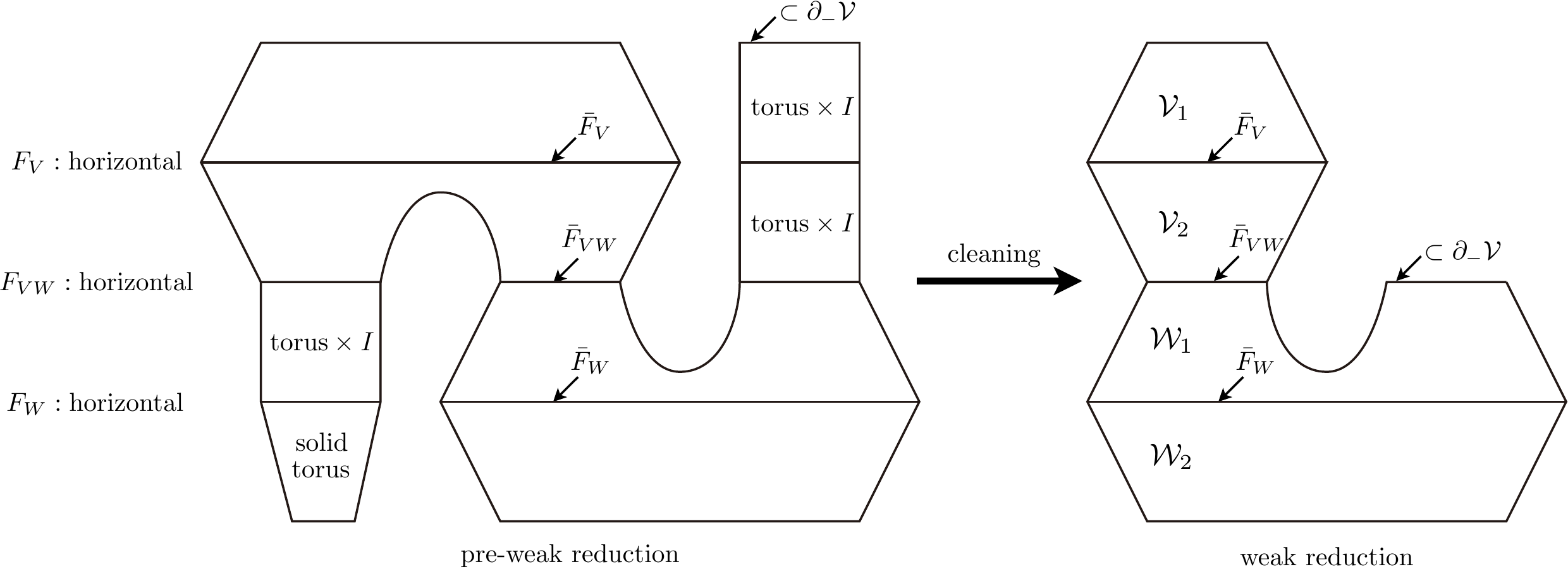}
\caption{The inner thin level comes from the component of $F_{VW}$ having scars of both $V$ and $W$. \label{figure6}}
\end{figure}

Since every weak reducing pair in a $\V$- or $\W$-facial cluster $\varepsilon$ gives a unique generalized Heegaard splitting after weak reduction up to isotopy by Lemma \ref{lemma-2-16}, we can say $\varepsilon$ has a GHS of either type (a), type (b)-$\W$ or type (b)-$\V$  by Lemma \ref{lemma-3-1} (we exclude the possibility that  $\varepsilon$ has a GHS of type (c) or type (d) by Lemma \ref{lemma-3-7}).

\begin{lemma}\label{lemma-3-2}
Assume $M$ and $F$ as in Lemma \ref{lemma-3-1}.
Let $\varepsilon_\V$ and $\varepsilon_\W$ be a $\V$-facial cluster and a $\W$-facial cluster such that they share the common center.
Then $\varepsilon_\V$ and $\varepsilon_\W$ have the same GHS (up to isotopy) of type (a).
\end{lemma}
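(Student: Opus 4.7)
The plan is to use Lemma \ref{lemma-2-16} to force $\varepsilon_\V$ and $\varepsilon_\W$ to produce the same GHS, and then to identify its type by intersecting the possibilities that Lemma \ref{lemma-3-1} allows for two different weak reducing pairs realizing it.

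First, I would record the structure of the shared center. By Definition \ref{definition-2-14} together with Lemma \ref{lemma-2-13}, it is a weak reducing pair $(V_0,W_0)$ in which $V_0$ is non-separating in $\V$ and $W_0$ is non-separating in $\W$. Lemma \ref{lemma-2-15} then lets me pick a hand $(V_1,W_0)$ of $\varepsilon_\V$ with $V_1$ separating in $\V$, and symmetrically a hand $(V_0,W_1)$ of $\varepsilon_\W$ with $W_1$ separating in $\W$. Because $(V_0,W_0)$ sits in both clusters, Lemma \ref{lemma-2-16} forces every weak reducing pair in $\varepsilon_\V\cup\varepsilon_\W$ to produce, up to isotopy, the same GHS as $(V_0,W_0)$; hence $\varepsilon_\V$ and $\varepsilon_\W$ share a common GHS, which I will call $\mathbf{G}$.

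Next, I would identify the type of $\mathbf{G}$ by reading Lemma \ref{lemma-3-1} twice, from two different weak reducing pairs producing $\mathbf{G}$. Applied to $(V_0,W_0)$, whose two disks are both non-separating, the classification leaves exactly two options: type (a), via sub-case (a)(i), or type (d). Applied to the hand $(V_1,W_0)$, where $V_1$ is separating and $W_0$ is non-separating, the classification collapses to type (a), via sub-case (a)(ii), or type (b)-$\W$, via sub-case (b)(i); in particular, types (c) and (d) are excluded because (c) requires both disks to cut off $(\text{torus})\times I$ while (d) requires both disks to be non-separating. The only option common to both readings is type (a), and the symmetric reading from the hand $(V_0,W_1)$ of $\varepsilon_\W$ (which allows only type (a) or type (b)-$\V$) is consistent with this conclusion.

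There is essentially no technical obstacle: the argument is a piece of bookkeeping over Lemma \ref{lemma-3-1}, supported by the hand-existence guarantee of Lemma \ref{lemma-2-15} and the isotopy-uniqueness of the GHS across a cluster from Lemma \ref{lemma-2-16}. The only conceptual point to keep in mind is that the ``type'' is an intrinsic invariant of the resulting GHS, so the constraints imposed by different weak reducing pairs representing the same $\mathbf{G}$ must all hold simultaneously, and intersecting the two short lists leaves only type (a).
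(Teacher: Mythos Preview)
Your proposal is correct and follows essentially the same route as the paper: use Lemma \ref{lemma-2-16} and the shared center to see the two clusters give the same GHS, then read off from Lemma \ref{lemma-3-1} the possible types for the center (both disks non-separating) and for a hand (one disk separating, one non-separating) and intersect. The paper compresses the case analysis into a single sentence, while you spell out the two short lists and their intersection; you also invoke Lemma \ref{lemma-2-15} explicitly to guarantee a hand exists, which the paper takes for granted.
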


\begin{proof}
Let $(\bar{V},\bar{W})$ be the common center.
Since $\varepsilon_\V$ and $\varepsilon_\W$ share the common center, if we apply Lemma \ref{lemma-2-16}, then it is obvious that $\varepsilon_\V$ and $\varepsilon_\W$ have the same GHS $\mathbf{H}$ up to isotopy.
Moreover, the common center $(\bar{V},\bar{W})$ consists of non-separating disks by Definition \ref{definition-2-14}.
Hence, a hand $(V,\bar{W})$ of $\varepsilon_\V$ consists of a separating disk $V$ and the non-separating disk $\bar{W}$.
But the assumption that the weak reducing pairs $(V,\bar{W})$ and $(\bar{V},\bar{W})$ give the same GHS up to isotopy forces $\mathbf{H}$ to be of type (a) by Lemma \ref{lemma-3-1}.

This completes the proof.
\end{proof}

\begin{definition}\label{definition-3-3}
Assume $M$ and $F$ as in Lemma \ref{lemma-3-1} and let $(V,W)$ be a weak reducing pair.
Suppose that the generalized Heegaard splitting obtained by weak reduction along $(V,W)$ is a type (a) GHS.
If $V$ is separating, then $V$ cuts off a solid torus from $\V$ by Lemma \ref{lemma-3-1} and we can choose a meridian disk $\bar{V}$ of the solid torus such that it misses $V$ and $W$ by Lemma \ref{lemma-2-7}.
Similarly, we can choose such non-separating disk $\bar{W}$ from $\W$ if $W$ is separating.
If both $V$ and $W$ are separating, then we can see that the once-punctured torus which $\partial V$ cuts off from $F$ misses the once-punctured torus which $\partial W$ cuts off from $F$ by Lemma \ref{lemma-2-7}.
This means that the solid torus which $V$ cuts off from $\V$ misses the solid torus which $W$ cuts off from $\W$, i.e. the four disks $V$, $\bar{V}$, $\bar{W}$ and $W$ are pairwise disjoint.
Choose $\bar{V}$ ($\bar{W}$ resp.) as $V$ ($W$ resp.) itself if $V$ ($W$ resp.) is non-separating.

Hence, we can find a weak reducing pair $(\bar{V},\bar{W})$ such that either
\begin{enumerate}
\item $(V,W)$ is $(\bar{V},\bar{W})$ itself,\label{definition-3-3-1}
\item $(V,W)$ is contained in a $\V$-face $\Delta_\V=\{V,\bar{V},\bar{W}\}$,\label{definition-3-3-2}
\item $(V,W)$ is contained in a $\W$-face $\Delta_\W=\{\bar{V},\bar{W},W\}$, or\label{definition-3-3-3}
\item $(V,W)$ is contained in a $3$-simplex $\Sigma_{VW}=\{V,\bar{V},\bar{W},W\}$.\label{definition-3-3-4}
\end{enumerate}
Here, $(\bar{V},\bar{W})$ is uniquely determined after once $(V,W)$ was given since the meridian disk of a solid torus is unique up to isotopy.

We will prove that $(V,W)$ is contained in a $3$-simplex of the form $$\Sigma_{V'W'}=\{V',\bar{V},\bar{W},W'\}$$ for $V'\subset \V$ and $W'\subset \W$ in any case as well as the case (\ref{definition-3-3-4}).
If we consider case (\ref{definition-3-3-2}), then (i) $\partial V$ divides $F$ into a once-punctured genus two surface $F'$ and a once-punctured torus $F''$ and (ii) $\partial \bar{V}\subset F''$ and $\partial\bar{W}\subset F'$ by Lemma \ref{lemma-2-8}.
Hence, we can find a band-sum $W'$ of two parallel copies of $\bar{W}$ in $\W$ such that $\partial W'\subset F'$, i.e. $(V,W)$ belongs to the $3$-simplex $\{V=V',\bar{V},W=\bar{W},W'\}$.
The symmetric argument also holds for the case (\ref{definition-3-3-3}).
If we consider the case (\ref{definition-3-3-1}), then it is easy to find two separating disks $V'$ and $W'$ such that $V'$ and $W'$ are band-sums of two parallel copies of $V$ and $W$ in $\V$ and $\W$ respectively and $\{V',V=\bar{V},W=\bar{W},W'\}$ forms a $3$-simplex in $\DVW(F)$ since $\partial V\cup\partial W$ is non-separating in $F$ by Lemma \ref{lemma-3-1}.

Hence, we get a $\V$-face $\{V',\bar{V},\bar{W}\}$ and a $\W$-face $\{\bar{V},\bar{W},W'\}$ from the $3$-simplex $\Sigma_{V'W'}$.
If we use Lemma \ref{lemma-2-15}, then we can guarantee the existence of the $\V$-facial cluster $\varepsilon_\V$ and the $\W$-facial cluster $\varepsilon_\W$ containing $(\bar{V},\bar{W})$ and both vertices $V$ and $W$ belong to $\varepsilon_\V\cup\varepsilon_\W$.

We can observe the follows.
\begin{enumerate}[(a)]
\item Only $\bar{V}$ and $\bar{W}$ are non-separating disks and the others are separating disks among the vertices of $\varepsilon_\V\cup\varepsilon_\W$ by Definition \ref{definition-2-14}.\label{definition-3-3-preclaim-a}
\item Since $\varepsilon_\V$ and $\varepsilon_\W$ share the common center $(\bar{V},\bar{W})$, each of $\bar{V}$ and $\bar{W}$ is connected to every the other vertex in $\varepsilon_\V\cup\varepsilon_\W$ by a $1$-simplex in $\varepsilon_\V\cup\varepsilon_\W$.\label{definition-3-3-preclaim-b}
\end{enumerate}
These two properties are at the base of the next claim.\\ 

\Claim{Let $\Sigma$ be the union of all simplices of $\DVW(F)$ spanned by the vertices of $\varepsilon_\V\cup\varepsilon_\W$.
Then $\Sigma$ is equal to $\bigcup_{V',W'}\Sigma_{V'W'}$ for all possible $V'$ and $W'$.}

\begin{proofc}
Since the four vertices of $\Sigma_{V'W'}$ come from a $\V$-face $\{V',\bar{V},\bar{W}\}$ in $\varepsilon_\V$ and a $\W$-face $\{\bar{V},\bar{W},W'\}$ in $\varepsilon_\W$, $\Sigma_{V'W'}$ is a $3$-simplex spanned by four vertices of $\varepsilon_\V\cup\varepsilon_\W$.
Therefore, $\bigcup_{V',W'}\Sigma_{V'W'}\subset \Sigma$ is obvious.
Hence, we will prove that $\Sigma\subset\bigcup_{V',W'}\Sigma_{V'W'}$.

It is sufficient to show that each simplex in $\Sigma$ belongs to some $\Sigma_{V'W'}$.
If there is a $0$-simplex in $\Sigma$, then it is contained in a $1$-simplex containing $\bar{V}$ or $\bar{W}$ in $\varepsilon_\V\cup\varepsilon_\W$ by (\ref{definition-3-3-preclaim-b}).
Hence, we will consider $1$- or more simplices in $\Sigma$.

\Case{a} $\sigma\subset\Sigma$ is a $1$-simplex.

If $\sigma$ contains a non-separating disk, then it must be $\bar{V}$ or $\bar{W}$ by (\ref{definition-3-3-preclaim-a}).
Since the other disk of $\sigma$ is also a vertex of $\varepsilon_\V\cup\varepsilon_\W$, $\sigma$ belongs to a $\V$-face of $\varepsilon_\V$ or a $\W$-face of $\varepsilon_\W$  by (\ref{definition-3-3-preclaim-b}).
Therefore, if we use the similar argument from the observation corresponding to the case (\ref{definition-3-3-2}) or (\ref{definition-3-3-3}) right before Claim, then we can find a $3$-simplex $\Sigma_{V'W'}$ in $\DVW(F)$ containing the $\V$- or $\W$-face and therefore $\Sigma_{V'W'}$ contains $\sigma$, leading to the result.

Hence, assume that $\sigma$ consists of two separating disks.
If $\sigma$ is not a weak reducing pair, i.e. it belongs to $\DV(F)$ or $\DW(F)$, then $\sigma$ is not contained in a $\V$-face or a $\W$-face by Lemma \ref{lemma-2-8} (in a $\V$-face, one of the $\V$-disks is non-separating and the other is separating).
Hence, we can find two $\V$-faces in $\varepsilon_\V$ or two $\W$-faces in $\varepsilon_\W$ such that each face contains one vertex of $\sigma$ by the assumption that the vertices of $\sigma$ come from those of $\varepsilon_\V\cup\varepsilon_\W$, i.e. $\sigma$ gives a $3$-simplex together with these two $\V$-faces or $\W$-faces.
But this $3$-simplex contains three vertices from $\DV(F)$ or $\DW(F)$, violating Lemma \ref{lemma-2-17}.
Hence, $\sigma$ is a weak reducing pair, i.e. it gives a $3$-simplex together with the $\V$-face $\Delta_\V$ of $\varepsilon_\V$ and the $\W$-face $\Delta_\W$ of $\varepsilon_\W$ such that each face contains one vertex of $\sigma$ and therefore this $3$-simplex has the form $\Sigma_{V'W'}$, leading to the result.

\Case{b} $\Delta\subset\Sigma$ is a $2$-simplex.

If $\Delta$ is not a $\V$- or $\W$-face, then it would consist of only three disks from $\DV(F)$ or $\DW(F)$.
Without loss of generality, assume that $\Delta\subset\DV(F)$.
Then each of three vertices of $\Delta$ is connected to $\bar{W}$ by a $1$-simplex by (\ref{definition-3-3-preclaim-b}).
That is, we get a $3$-simplex in $\DVW(F)$ containing three vertices coming from $\DV(F)$, violating Lemma \ref{lemma-2-17}.
Hence, $\Delta$ is a $\V$- or $\W$-face.
If $\Delta$ belongs to $\varepsilon_\V$ or $\varepsilon_\W$, then we can find a $3$-simplex $\Sigma_{V'W'}$ in $\DVW(F)$ containing $\Delta$ by using the similar 
argument from the observation corresponding to the case (\ref{definition-3-3-2}) or (\ref{definition-3-3-3}) right before Claim, leading to the result.
If $\Delta$ does not belong to $\varepsilon_\V\cup\varepsilon_\W$, then exactly one of $\bar{V}$ and $\bar{W}$ does not belong to $\Delta$ (Lemma \ref{lemma-2-8} forces $\Delta$ to have at least one non-separating disk, i.e. one of $\bar{V}$ and $\bar{W}$ by (\ref{definition-3-3-preclaim-a})).
Without loss of generality, assume that $\bar{V}\notin\Delta$ and $\bar{W}\in\Delta$.
In this case, $\Delta$ must be a $\W$-face otherwise it must contain $\bar{V}$ by (\ref{definition-3-3-preclaim-a}) and Lemma \ref{lemma-2-8}.
Hence, $\Delta$ gives a $3$-simplex of the form $\Sigma_{V'W'}$ together with $\bar{V}$ by (\ref{definition-3-3-preclaim-b}) and therefore $\Delta\subset \Sigma_{V'W'}$, leading to the result.

\Case{c} $\Sigma'\subset\Sigma$ is a $3$-simplex.

Then $\Sigma'$ must contain two non-separating disks (separating disks resp.) such that one comes from $\V$ and the other comes from $\W$ by Lemma \ref{lemma-2-17}, but the choice of non-separating disks among the vertices of $\varepsilon_\V\cup\varepsilon_\W$ is uniquely determined as $\{\bar{V},\bar{W}\}$ by (\ref{definition-3-3-preclaim-a}).
This means that $\Sigma'$ is of the form $\Sigma_{V'W'}$, leading to the result.

By Lemma \ref{lemma-2-17}, we do not need to consider more high dimensional simplices.
This completes the proof of Claim.
\end{proofc}

For a weak reducing pair $(V,W)$, we denote the generalized Heegaard splitting obtained by weak reduction from $(\V,\W;F)$ along $(V,W)$ as $\mathbf{H}_{(V,W)}$.
If a weak reducing pair $\sigma$ in $\Sigma$ belongs to $\varepsilon_\V$ or $\varepsilon_\W$, then $\mathbf{H}_\sigma$ is isotopic to $\mathbf{H}_{(\bar{V},\bar{W})}$ by Lemma \ref{lemma-2-16}.
Suppose that $\sigma$ does not belong to $\varepsilon_\V\cup\varepsilon_\W$.
By the proof of Claim, $\sigma$ belongs to a $3$-simplex of the form $\Sigma_{V'W'}=\{V',\bar{V},\bar{W},W'\}$ and therefore it would consist of two separating disks $V'$ and $W'$.
Consider the $\V$-face $\{V',\bar{V},W'\}$ containing $\sigma$ in $\Sigma_{V'W'}$.
Then $\mathbf{H}_{(V',W')}$ is isotopic to $\mathbf{H}_{(\bar{V},W')}$ by Lemma \ref{lemma-2-16}.
Moreover, $\mathbf{H}_{(\bar{V},W')}$ is isotopic to $\mathbf{H}_{(\bar{V},\bar{W})}$ by Lemma \ref{lemma-2-16} because $(\bar{V},W')$ belongs to the $\W$-face $\{\bar{V},\bar{W},W'\}\subset \varepsilon_\W$, i.e. $\mathbf{H}_{(V',W')}$ is isotopic to $\mathbf{H}_{(\bar{V},\bar{W})}$.
Since every $\Sigma_{V'W'}$ in $\bigcup_{V',W'}\Sigma_{V'W'}$ has the common weak reducing pair $(\bar{V},\bar{W})$,  the generalized Heegaard splitting obtained by weak reduction along a weak reducing pair in $\Sigma$ is unique up to isotopy.

We call $\Sigma$ ``\textit{a building block of $\DVW(F)$ having a type (a) GHS}''.
We call the unique weak reducing pair $(\bar{V},\bar{W})$ consisting of non-separating disks in $\Sigma$ the \textit{center} of the building block.

Note that Claim only needs a $\V$-facial cluster and a $\W$-facial cluster sharing the common center even though we started from a weak reducing pair giving a GHS of type (a) after weak reduction (the phrase ``\textit{the similar argument from the observation corresponding to the case (\ref{definition-3-3-2}) or (\ref{definition-3-3-3}) right before Claim}'' only needs a $\V$- or $\W$-face containing two non-separating disks without a specific kind of GHS).

In summary, we give the formal definition (the term ``type (a)'' comes from Lemma \ref{lemma-3-2}).

\textbf{The Formal Definition:}
A \textit{building block of $\DVW(F)$ having a type (a) GHS} is the union of all simplices of $\DVW(F)$ spanned by the vertices of $\varepsilon_\V\cup\varepsilon_\W$, where $\varepsilon_\V$ is a $\V$-facial cluster, $\varepsilon_\W$ is a $\W$-facial cluster, and they share the common center.
\end{definition}

\begin{figure}
\includegraphics[width=10cm]{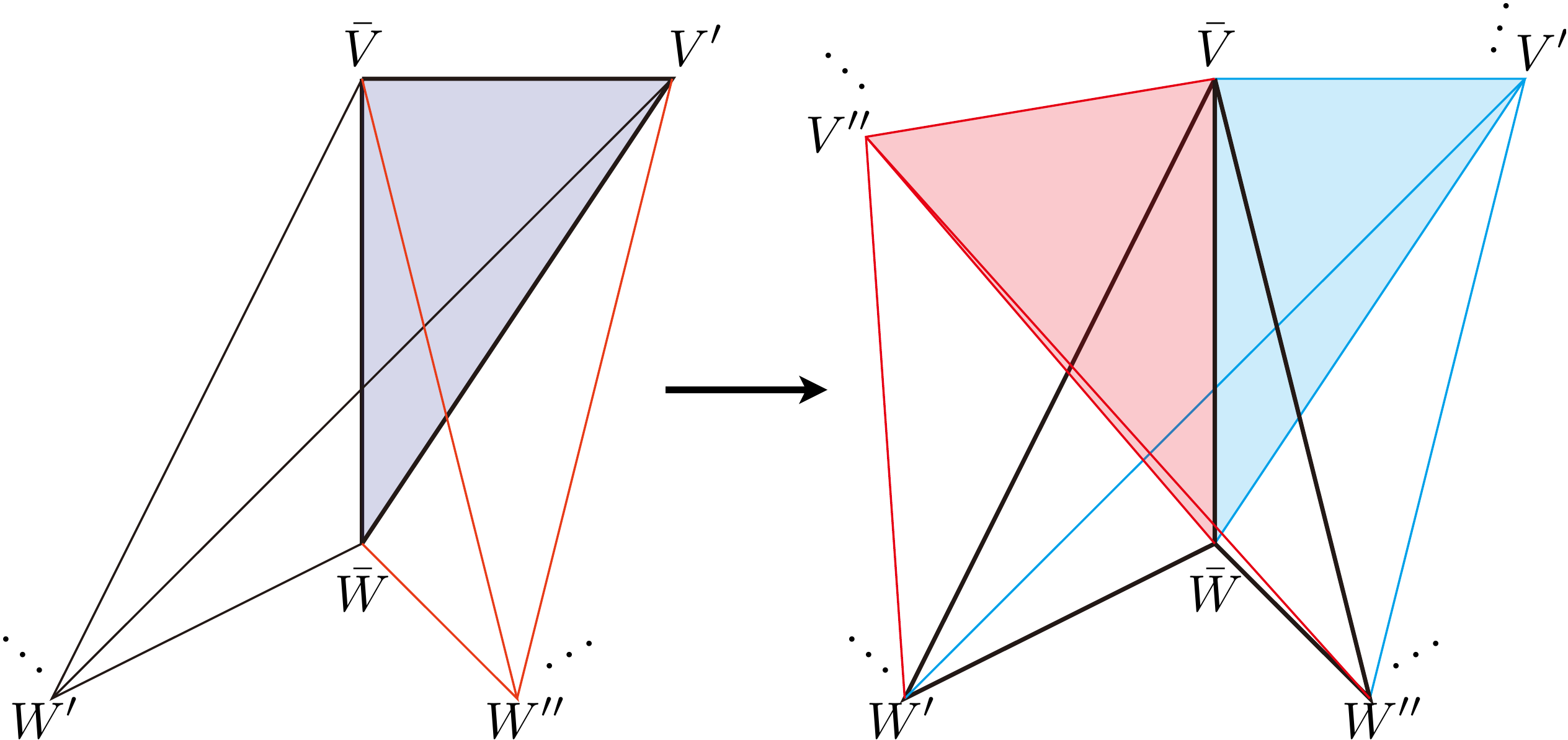}
\caption{Constucting $\Sigma$ \label{figure7}}
\end{figure}

Figure \ref{figure7} describes the way how we can imagine a building block having a type (a) GHS.
Since the building block $\Sigma$ is equal to $\bigcup_{V',W'}\Sigma_{V'W'}$ by Claim in Definition \ref{definition-3-3}, we consider the union $\bigcup_{W'}\Sigma_{V'W'}$ for each  $V'$ first  and then the whole union $\bigcup_{V'} \left(\bigcup_{W'}\Sigma_{V'W'}\right)$. 
Note that if there are two $3$-simplexes $\Sigma_{V'W'}$ and $\Sigma_{V''W''}$ such that $\Sigma_{V'W'}\neq \Sigma_{V''W''}$, then $\Sigma_{V'W'} \cap \Sigma_{V''W''}$ contains at least $\{ \bar{V},\bar{W} \}$. 
Hence, if the intersection is of two dimension, then a $2$-simplex in the intersection is a $\V$-face in $\varepsilon_\V$ or a $\W$-face in $\varepsilon_\W$.

If we consider $\DVW(F)$ of Case (a) in the proof of Theorem 1.1 of \cite{11}, then it is a building block of $\DVW(F)$ having a type (a) GHS itself.
Hence, if we use the same argument in \cite{11}, then we get the following lemma.

\begin{lemma}
A building block of $\DVW(F)$ having a type (a) GHS is contractible.
\end{lemma}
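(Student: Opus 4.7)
The plan is to exploit the fact that every top-dimensional simplex of the building block $\Sigma$ contains the common edge $e=\{\bar{V},\bar{W}\}$, so that $\Sigma$ is in fact the closed star of $e$ in itself. The Claim in Definition \ref{definition-3-3} already does the heavy lifting: it established that $\Sigma=\bigcup_{V',W'}\Sigma_{V'W'}$, and each $\Sigma_{V'W'}=\{V',\bar{V},\bar{W},W'\}$ visibly contains $e$.

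First I would record the immediate consequence of the Claim: every simplex of $\Sigma$, being a simplex of some $3$-simplex $\Sigma_{V'W'}$, is a face of a top-dimensional simplex containing $e$. Hence every simplex of $\Sigma$ lies in the closed star of $e$ in $\Sigma$, and therefore $\Sigma=\overline{\operatorname{st}}(e,\Sigma)$.

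Second, I would invoke the standard fact that the closed star of a simplex in any simplicial complex is contractible, which one sees directly by noting that it is star-convex about any interior point of the simplex: for an interior point $p$ of $e$ and any point $q$ of the closed star, the straight segment from $p$ to $q$ lies inside whichever ambient simplex $\tau\supseteq e$ contains $q$, because $\tau$ itself is convex. Applied to $e\subset\Sigma$, this immediately yields that $\Sigma$ deformation retracts to $e$ and hence is contractible. Equivalently, the same conclusion follows from the homeomorphism $\overline{\operatorname{st}}(e,\Sigma)\cong |e|\ast |\operatorname{lk}(e,\Sigma)|$ together with the fact that the join of a simplex with any space is contractible.

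The only step with real content is verifying $\Sigma=\overline{\operatorname{st}}(e,\Sigma)$, but this is precisely what the Claim in Definition \ref{definition-3-3} establishes through its case analysis (Case~a, Case~b, Case~c); no new geometric or combinatorial argument is required. That is why the author's remark ``if we use the same argument in \cite{11}'' is enough to close the lemma: the explicit deformation retraction of Case (a) in \cite{11} is exactly the collapse of $\Sigma$ onto $e$ via the star structure identified here, with the per-simplex retractions $V'\mapsto\bar{V}$ and $W'\mapsto\bar{W}$ patching on overlaps because two distinct $\Sigma_{V'W'}$ always share a subcomplex containing $e$.
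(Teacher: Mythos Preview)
Your argument is correct. The paper itself does not give a proof here but simply defers to \cite{11}, so your write-up is actually more self-contained than what appears in the text. The essential observation you isolate---that by the Claim in Definition~\ref{definition-3-3} every simplex of $\Sigma$ lies in some $\Sigma_{V'W'}\ni\{\bar V,\bar W\}$, so $\Sigma$ is a simplicial cone with apex $\bar V$ (equivalently, $\Sigma=\overline{\operatorname{st}}(\{\bar V,\bar W\},\Sigma)$)---is exactly the mechanism behind the deformation retraction the author has in mind from \cite{11}; your final paragraph identifies this correctly.

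One small remark on presentation: the cone-with-apex-$\bar V$ formulation is the cleanest way to close the argument, since it avoids any appeal to a geometric realization (for every simplex $\tau\subset\Sigma$ one has $\tau\cup\{\bar V\}\subset\Sigma_{V'W'}\subset\Sigma$, hence $\Sigma$ is a simplicial cone and therefore contractible). Your straight-line/star-convexity justification is also fine in the standard geometric realization, but the purely combinatorial cone statement sidesteps having to check that the linear homotopy is globally well defined across overlapping $3$-simplices.
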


\begin{definition}\label{definition-3-5}
Assume $M$ and $F$ as in Lemma \ref{lemma-3-1} and let $(V,W)$ be a weak reducing pair.
Suppose that the generalized Heegaard splitting obtained by weak reduction along $(V,W)$ is a type (b)-$\W$ GHS.
Then $V$ cuts off $(\text{torus})\times I$ from $\V$ by Lemma \ref{lemma-3-1}.
Let $\bar{V}$ be $V$ itself.
Choose $\bar{W}$ as $W$ itself if $W$ is non-separating in $\W$.
If $W$ is separating, then it cuts off a solid torus from $\W$ by Lemma \ref{lemma-3-1}.
Here, we can choose a meridian disk $\bar{W}$ of the solid torus such that it misses $W$ and $V=\bar{V}$ by Lemma \ref{lemma-2-7}.
Hence, we can find a weak reducing pair $(\bar{V},\bar{W})$ such that either
\begin{enumerate}
\item $(V,W)$ is $(\bar{V},\bar{W})$ itself, or\label{definition-3-5-1}
\item $(V,W)$ is contained in a $\W$-face $\Delta_\W=\{\bar{V},\bar{W},W\}$.\label{definition-3-5-2}
\end{enumerate}
Here, $(\bar{V},\bar{W})$ is uniquely determined  after once $(V,W)$ was given since the meridian disk of a solid torus is unique up to isotopy.

Consider the case (\ref{definition-3-5-1}).
We can see that (i) $\partial \bar{V}$ divides $F$ into a once-punctured genus two surface $F'$ and a once-punctured torus $F''$ and (ii) $\partial\bar{W}\subset F'$ by Lemma \ref{lemma-2-7}.
Hence, it is easy to find a separating disk $W'$ such that $W'$ is a band-sum of two parallel copies of $W=\bar{W}$ and $\{V=\bar{V},W=\bar{W},W'\}$ forms a $\W$-face in $\DVW(F)$.
Hence, we can guarantee the existence of the $\W$-facial cluster $\varepsilon_\W$ containing $(V,W)$ whose center is  $(\bar{V}=V,\bar{W})$ in the case (\ref{definition-3-5-1}) as well as the case (\ref{definition-3-5-2}) by Lemma \ref{lemma-2-15}.
We call $\varepsilon_\W$ ``\textit{a building block of $\DVW(F)$ having a type (b)-$\W$ GHS}''.

Symmetrically, if the generalized Heegaard splitting obtained by weak reduction along $(V,W)$ is a type (b)-$\V$ GHS, then $W$ cuts off $(\text{torus})\times I$ from $\W$ by Lemma \ref{lemma-3-1} and we can find a $\V$-facial cluster $\varepsilon_\V$ containing $(V,W)$ and the uniquely determined weak reducing pair $(\bar{V},\bar{W}=W)$.
We call $\varepsilon_\V$ ``\textit{a building block of $\DVW(F)$ having a type (b)-$\V$ GHS}''.

We call the center $(\bar{V},\bar{W})$ of the $\W$- or $\V$-facial cluster corresponding to a building block of $\DVW(F)$ having a type (b)-$\W$ or type (b)-$\V$ GHS the \textit{center} of the building block.
By Lemma \ref{lemma-3-1}, the center of a building block of $\DVW(F)$ having a type (b)-$\W$ GHS (type (b)-$\V$ GHS resp.) consists of a separating $\V$-disk ($\W$-disk resp.) which cuts off $(\text{torus})\times I$ from $\V$ ($\W$ resp.) and a non-separating $\W$-disk ($\V$-disk resp.).
Since a $\W$- or $\V$-facial cluster is contractible, a building block of $\DVW(F)$ having a type (b)-$\W$ or type (b)-$\V$ GHS is contractible.

In summary, we give the formal definition.

\textbf{The Formal Definition:} 
\begin{enumerate}
\item \textit{A building block of $\DVW(F)$ having a type (b)-$\W$ GHS} is a $\W$-facial cluster having a type (b)-$\W$ GHS.
\item \textit{A building block of $\DVW(F)$ having a type (b)-$\V$ GHS} is a $\V$-facial cluster having a type (b)-$\V$ GHS.
\end{enumerate}
\end{definition}

\begin{definition}\label{definition-3-6}
Assume $M$ and $F$ as in Lemma \ref{lemma-3-1} and let $(V,W)$ be a weak reducing pair.
Suppose that the generalized Heegaard splitting obtained by weak reduction along $(V,W)$ is a type (c) GHS (type (d) GHS resp.).
In this case, we call the weak reducing pair $(V,W)$ itself ``\textit{a building block of $\DVW(F)$ having a type (c) GHS} (\textit{type (d) GHS} resp.)''.
We also define the \textit{center} of the building block $(\bar{V},\bar{W})$ as $(V,W)$ itself.
\end{definition}

In brief, we can find a building block of $\DVW(F)$ containing an arbitrarily given weak reducing pair.

Now we introduce the next lemma describing a special property for a building block of $\DVW(F)$ having a type (c) or type (d) GHS.

\begin{lemma}\label{lemma-3-7}
Assume $M$ and $F$ as in Lemma \ref{lemma-3-1}.
Then a building block of $\DVW(F)$ having a type (c) or type (d) GHS cannot be contained in a $\V$- or $\W$-face.
\end{lemma}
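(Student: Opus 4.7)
The plan is to suppose for contradiction that the weak reducing pair $(V,W)$, which by Definition \ref{definition-3-6} constitutes the entire building block in the type (c) or (d) case, lies inside a $\V$-face $\Delta = \{V, V', W\}$; the $\W$-face case will follow from the symmetric argument exchanging the roles of $\V$ and $\W$.

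The first step is to extract the structure of $\Delta$. By Lemma \ref{lemma-2-15}, $\Delta$ belongs to some $\V$-facial cluster, and then combining Lemma \ref{lemma-2-8} with Lemma \ref{lemma-2-13} and Definition \ref{definition-2-14} yields the decisive fact: exactly one of $V, V'$ is non-separating (and lies in the unique weak reducing pair that is the center of the cluster) while the other is separating, being a band-sum of two parallel copies of the non-separating one and therefore cutting off a solid torus from $\V$ whose meridian is the non-separating disk.

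For the type (c) case, Lemma \ref{lemma-3-1}(\ref{lemma-3-1-c}) says that $V$ cuts off $(\text{torus})\times I$ from $\V$, so $V$ is separating and is forced to play the role of the hand in $\Delta$. But then $V$ would have to cut off a solid torus from $\V$, directly contradicting that it cuts off $(\text{torus})\times I$. For the type (d) case, Lemma \ref{lemma-3-1}(\ref{lemma-3-1-d}) says $V$ is non-separating, so $V$ must be the center of $\Delta$ and $V'$ is the separating hand cutting off a solid torus from $\V$ with meridian $V$. By Lemma \ref{lemma-2-16}, $(V',W)$ and $(V,W)$ produce isotopic GHSs, so $(V',W)$ also gives a type (d) GHS. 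However, $(V',W)$ consists of a disk cutting off a solid torus from $\V$ together with a non-separating disk of $\W$, which by the enumeration in Lemma \ref{lemma-3-1} falls into case (\ref{GHS-a})(ii) and therefore yields a type (a) GHS, a contradiction.

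The main content is the extraction of the center/hand structure on $\Delta$ from Lemma \ref{lemma-2-13} and Definition \ref{definition-2-14}; once that is in hand each type becomes a direct one-line check against Lemma \ref{lemma-3-1}. The only step requiring any real care is confirming in the type (d) argument that a weak reducing pair with one solid-torus-cutting $\V$-disk and one non-separating $\W$-disk is forced into case (\ref{GHS-a})(ii), which is immediate from inspecting the list in Lemma \ref{lemma-3-1}.
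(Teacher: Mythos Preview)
Your proof is correct and follows essentially the same strategy as the paper: use Lemma \ref{lemma-2-8} to see that in the $\V$-face one $\V$-disk is non-separating and the other is separating, then check this against the classification in Lemma \ref{lemma-3-1}. The one noteworthy variation is in the type (c) case. The paper observes that the non-separating companion $V'$ paired with the separating $W$ cannot yield a type (c) GHS by Lemma \ref{lemma-3-1}, and then invokes Lemma \ref{lemma-2-16} to contradict the fact that $(V,W)$ and $(V',W)$ must give isotopic GHSs. You instead appeal directly to Definition \ref{definition-2-14}: the separating $\V$-disk of the face, being a hand, must cut off a solid torus from $\V$, which is incompatible with $V$ cutting off $(\text{torus})\times I$. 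Your route is slightly more elementary here, bypassing Lemma \ref{lemma-2-16} entirely for type (c); for type (d) the two arguments coincide.
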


\begin{proof}
Suppose that a building block of $\DVW(F)$ having a type (c) GHS is contained in a $\V$-face $\Delta$ without loss of generality.
This building block is just a weak reducing pair $(V,W)$ consisting of two separating disks by Definition \ref{definition-3-6}.
Hence, the third vertex $V'$ of $\Delta$ other than $(V,W)$ is a non-separating disk in $\V$ by Lemma \ref{lemma-2-8}.
But the weak reducing pair $(V',W)$ cannot give a type (c) GHS after weak reduction  by Lemma \ref{lemma-3-1} since it consists of a non-separating disk and a separating disk, i.e. the two generalized Heegaard splittings obtained by weak reductions from $(\V,\W;F)$ along $(V,W)$ and $(V',W)$ respectively are non-isotopic. 
This violates Lemma \ref{lemma-2-16}.

We can use the symmetric argument for a building block of $\DVW(F)$ having a type (d) GHS.
This completes the proof.
\end{proof}

By definition, every weak reducing pair belongs to some building block of $\DVW(F)$.
But it is not clear that $\DVW(F)$ is the union of all building blocks since there is a possibility that some simplex of $\DVW(F)$ might not belong to a building block even though every weak reducing pair of this simplex belongs to some building block.
Hence, we need the following lemma.

\begin{lemma}\label{lemma-3-8}
Every simplex in $\DVW(F)$ belongs to a building block of $\DVW(F)$.
\end{lemma}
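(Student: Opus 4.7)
The plan is to show, for each simplex $\sigma \subseteq \DVW(F)$, that $\sigma$ lies in the building block associated to any weak reducing pair $(V,W) \subseteq \sigma$ (such a pair exists because $\sigma$ has at least one $\V$-vertex and at least one $\W$-vertex). By Lemma \ref{lemma-2-17}, $\operatorname{dim}\sigma \leq 3$, so I would split the argument into the three cases $\operatorname{dim}\sigma \in \{1, 2, 3\}$. The one-dimensional case is immediate from Definitions \ref{definition-3-3}, \ref{definition-3-5} and \ref{definition-3-6}, which explicitly assign a building block to each weak reducing pair according to its GHS type given by Lemma \ref{lemma-3-1}.

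For $\operatorname{dim}\sigma = 2$, by symmetry I focus on the case that $\sigma = \{V_1, V_2, W\}$ is a $\V$-face. Lemma \ref{lemma-2-8} forces, after relabeling, $V_1$ to be non-separating and $V_2$ to be a separating disk cutting off a solid torus from $\V$. Applying Lemma \ref{lemma-2-16} together with Lemma \ref{lemma-3-1} to the two weak reducing pairs $(V_1, W)$ and $(V_2, W)$ shows that their common GHS type must be either type (a) or type (b)-$\V$, since types (b)-$\W$, (c), and (d) are incompatible with $V_2$ cutting off a solid torus rather than a $(\text{torus}) \times I$. In the type (b)-$\V$ case, Lemma \ref{lemma-2-15} places $\sigma$ in a $\V$-facial cluster, which is the building block itself by Definition \ref{definition-3-5}. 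In the type (a) case, I would form the distinguished non-separating weak reducing pair $(\bar{V}, \bar{W})$ of Definition \ref{definition-3-3} (so $\bar{V} = V_1$, and $\bar{W}$ equals $W$ if $W$ is non-separating, or the unique meridian of the solid torus $W$ cuts off otherwise), and then verify that all three vertices $V_1, V_2, W$ lie in $\varepsilon_\V \cup \varepsilon_\W$ for the $\V$- and $\W$-facial clusters sharing the common center $(\bar{V}, \bar{W})$. It follows that $\sigma$ is spanned by vertices of $\varepsilon_\V \cup \varepsilon_\W$ and hence lies in the type (a) building block.

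For $\operatorname{dim}\sigma = 3$, Lemma \ref{lemma-2-17} forces $\sigma = \{V_1, V_2, W_1, W_2\}$ with $V_1, W_1$ non-separating and $V_2, W_2$ separating band-sums of parallel copies. Since $(V_2, W_1)$ has $V_2$ cutting off a solid torus and $W_1$ non-separating, Lemma \ref{lemma-3-1} classifies it as type (a); Lemma \ref{lemma-2-16} applied to the $\V$-face $\{V_1, V_2, W_1\}$ then propagates type (a) to the pair $(V_1, W_1)$ and thus to the whole $3$-simplex. Setting $(\bar{V}, \bar{W}) = (V_1, W_1)$ in Definition \ref{definition-3-3}, the simplex $\sigma$ is precisely a $3$-simplex of the form $\Sigma_{V_2 W_2}$ appearing in the Claim of that definition, so $\sigma$ lies in the corresponding type (a) building block by construction.

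The main obstacle is the inclusion check in the two-dimensional type (a) case: confirming that the separating disk $V_2$ and (when $W$ is also separating) the disk $W$ itself actually participate as hands of the facial clusters with center $(\bar{V}, \bar{W})$. This reduces to verifying the disjointness conditions needed to form the $\V$-face $\{V_1, V_2, \bar{W}\}$ and the $\W$-face $\{V_1, \bar{W}, W\}$. Here I would combine (i) the fact that a $\V$-compressing disk is disjoint from any disk contained in $\W$, which gives $V_2 \cap \bar{W} = \emptyset$ once $\bar{W}$ is chosen inside the solid torus cut off by $W$, with (ii) the uniqueness up to isotopy of the meridian disk of the solid torus cut off by a separating disk, supplied by Lemma \ref{lemma-2-8} and Definition \ref{definition-2-14}.
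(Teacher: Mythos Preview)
Your overall strategy matches the paper's: both proofs reduce to the cases of $\V$-faces (and symmetrically $\W$-faces) and $3$-simplices, classify the GHS type of a weak reducing pair in the simplex via Lemma~\ref{lemma-3-1}, and then locate the simplex inside the building block of that type. The use of Lemma~\ref{lemma-2-17} to force a $3$-simplex into the form $\{V_1,V_2,W_1,W_2\}=\Sigma_{V_2W_2}$ and the splitting of the $\V$-face case according to whether the common $\W$-disk is separating are the same as in the paper.

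There is, however, a genuine error in your final paragraph. The assertion that ``a $\V$-compressing disk is disjoint from any disk contained in $\W$'' does not give $V_2\cap\bar W=\emptyset$: while the interiors of $V_2\subset\V$ and $\bar W\subset\W$ are automatically disjoint, their boundaries both lie in $F$ and may very well intersect. What you must show is that $\partial V_2$ lies \emph{outside} the once-punctured torus $T_W\subset F$ bounded by $\partial W$, so that $\partial V_2\cap\partial\bar W=\emptyset$. A priori, $\partial V_2$ (which is disjoint from $\partial W$) could sit entirely inside $T_W$. This is precisely where Lemma~\ref{lemma-2-7} is needed, and the paper invokes it at this step: compressing $F$ along $\{V_2,W\}$ cannot produce a $2$-sphere with scars from both sides, which rules out $\partial W\subset T_{V_2}$ (equivalently $T_W\supset T_{V_2}$ or $T_{V_2}\supset T_W$) and forces the two once-punctured tori to be disjoint in $F$. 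With this correction the four disks $V_1,V_2,\bar W,W$ are pairwise disjoint, so $\{V_1,V_2,\bar W,W\}$ is a $3$-simplex of the form $\Sigma_{V_2W}$, and $\sigma=\{V_1,V_2,W\}\subset\Sigma_{V_2W}\subset\Sigma$ lands in the type~(a) building block. Your item~(ii) about meridian uniqueness is fine but is a separate point; it does not supply the missing disjointness.
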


\begin{proof}
Every $0$-simplex in $\DVW(F)$ is a vertex of $1$- or more dimensional simplex of $\DVW(F)$ by definition of $\DVW(F)$.
Hence, we will consider $1$- or more dimensional simplices of $\DVW(F)$.

If there is a $1$-simplex in $\DVW(F)$, then either it is a weak reducing pair or a subsimplex of a $2$- or more dimensional simplex in $\DVW(F)$.
But we already knew that every weak reducing pair belongs to a building block of $\DVW(F)$ by the definitions of building blocks of $\DVW(F)$.
Hence, we only need to consider $2$- or $3$-simplices in $\DVW(F)$ by Lemma \ref{lemma-2-17}.

\Case{a} 
Suppose that $\Delta\subset\DVW(F)$ is a $2$-simplex.
If $\Delta$ is neither a $\V$-face nor a $\W$-face, then it belongs to $\DV(F)$ or $\DW(F)$ and it is a subsimplex of a $3$-simplex in $\DVW(F)$ by Lemma \ref{lemma-2-17}.
But this contradicts Lemma \ref{lemma-2-17} since this $3$-simplex contains three vertices from $\DV(F)$ or $\DW(F)$.

Without loss of generality, assume that $\Delta$ is a $\V$-face.
Then there is a $\V$-facial cluster $\varepsilon$ containing $\Delta$ by Lemma \ref{lemma-2-15}.

If the $\W$-disk of the center of $\varepsilon$ is non-separating, then $\Delta$ consists of two weak reducing pairs, where one consists of non-separating disks and the other consists of a non-separating $\W$-disk and a separating $\V$-disk by Definition \ref{definition-2-14}.
Hence, the generalized Heegaard splitting corresponding to $\Delta$ must be of type (a) by Lemma \ref{lemma-3-1}.
That is, we can consider $\Delta$ as $\Delta_\V$ in the case (\ref{definition-3-3-2}) of Definition \ref{definition-3-3} and therefore we can find a building block of $\DVW(F)$ having a type (a) GHS as in Definition \ref{definition-3-3} and it contains $\Delta$, leading to the result.

Hence, we can assume that the $\W$-disk of the center of $\varepsilon$ is separating, i.e. $\Delta$ consists of two weak reducing pairs, where one consists of a non-separating $\V$-disk and a separating $\W$-disk and the other consists of separating disks.
This means that the generalized Heegaard splitting corresponding to $\Delta$ is a type (a) GHS or a type (b)-$\V$ GHS by Lemma \ref{lemma-3-1}.
In the latter case, $\varepsilon$ itself is the building block of $\DVW(F)$ having a type (b)-$\V$ GHS as in Definition \ref{definition-3-5}, leading to the result.
In the former case, the $\W$-disk cuts off a solid torus from $\W$ by Lemma \ref{lemma-3-1} and we can find a meridian disk of the solid torus missing the three disks from $\Delta$ by Lemma \ref{lemma-2-7} and Lemma \ref{lemma-2-8}.
This gives a $3$-simplex $\Sigma_{V'W'}=\{V',\bar{V},\bar{W},W'\}$ containing $\Delta$, where $\bar{V}$ and $\bar{W}$ are non-separating disks and $V'$ and $W'$ are separating disks.
Here, we can see $\Delta=\{V',\bar{V},W'\}$.
Let $\Delta_\V$ and $\Delta_\W$ be the $\V$-face $\{V',\bar{V},\bar{W}\}$ and the $\W$-face $\{\bar{V},\bar{W},W'\}$.
Let $\varepsilon_\V$ and $\varepsilon_\W$ be the $\V$- and $\W$-facial clusters containing $\Delta_\V$ and $\Delta_\W$ respectively guaranteed by Lemma \ref{lemma-2-15}.
Since $\varepsilon_\V$ and $\varepsilon_\W$ share the common center $(\bar{V},\bar{W}$), we can define $\Sigma$ as the set of all simplices of $\DVW(F)$ spanned by the vertices of $\varepsilon_\V\cup\varepsilon_\W$ and it would be the building block of $\DVW(F)$ having a type (a) GHS containing $\Delta$ ($\Delta\subset \Sigma_{V'W'}\subset \bigcup_{V',W'}\Sigma_{V'W'}=\Sigma$), leading to the result.

\Case{b} $\Sigma'\subset \DVW(F)$ is a $3$-simplex.

If we consider a $3$-simplex $\Sigma'$, then it must be of the form $\Sigma'=\{V',\bar{V},\bar{W},W'\}$ by Lemma \ref{lemma-2-17}, where $\bar{V}$ ($\bar{W}$ resp.) is a non-separating disk in $\V$ ($\W$ resp.) and $V'$ ($W'$ resp.) is a band-sum of two parallel copies of $\bar{V}$ in $\V$ ($\bar{W}$ in $\W$ resp.).
Hence, we can find a $\V$-facial cluster and a $\W$-facial cluster sharing the common center $(\bar{V},\bar{W})$ as the previous paragraph, and these give a  building block $\Sigma$ of $\DVW(F)$ having a type (a) GHS containing $\Sigma'$ ($\Sigma'\subset \bigcup_{V',W'}\Sigma_{V'W'}=\Sigma$), leading to the result.

This completes the proof.
\end{proof}

Note that the intersection of a building block of $\DVW(F)$ having a type (a) GHS and $\DV(F)$ is the same as that of the $\V$-facial cluster containing the center of the building block and $\DV(F)$ (see Claim of Definition \ref{definition-3-3} and Figure \ref{figure7}).
Hence, if a building block of $\DVW(F)$ intersects $\DV(F)$ in a set of dimension one, then it is a $\ast$-shaped graph with infinitely many edges (see Lemma \ref{lemma-2-15}).
That is, every $1$-simplex in the intersection contains the $\V$-disk of the center of the building block  and this vertex is positioned at the center of the $\ast$-shaped graph.

In summary, we get the following lemma.

\begin{lemma}\label{lemma-3-9}
Assume $M$ and $F$ as in Lemma \ref{lemma-3-1}.
Then $\DVW(F)$ is the union of building blocks of $\DVW(F)$.
Moreover, every building block of $\DVW(F)$ is contractible.
When a building block of $\DVW(F)$ intersects $\DV(F)$ or $\DW(F)$, the dimension of the intersection is at most one.
Indeed, the intersection is either a vertex if the dimension is zero or a $\ast$-shaped graph with infinitely many edges if the dimension is one.
Therefore, if the dimension of the intersection is one, then the $\ast$-shaped graph comes from the intersection of the $\V$- or $\W$-facial cluster in the building block containing the center and $\DV(F)$ or $\DW(F)$, i.e. the vertex positioned at the center of the $\ast$-shaped graph comes from the center of the building block.
\end{lemma}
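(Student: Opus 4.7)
The three assertions in the lemma are essentially a repackaging of results already at hand, so the plan is to verify each in turn and then pinpoint where the real content lies. For the covering statement I would simply invoke Lemma \ref{lemma-3-8}: every simplex of $\DVW(F)$ of positive dimension lies in some building block, and every $0$-simplex of $\DVW(F)$ is a vertex of such a higher-dimensional simplex by the very definition of $\DVW(F)$. For contractibility I would split into the four types. Types (c) and (d) are single weak reducing pairs by Definition \ref{definition-3-6}, hence $1$-simplices, which are obviously contractible. Types (b)-$\V$ and (b)-$\W$ are facial clusters by Definition \ref{definition-3-5}, hence contractible by the remark preceding Lemma \ref{lemma-2-15}. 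Type (a) is handled by the contractibility lemma stated immediately after Figure \ref{figure7}.

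For the dimension bound the key input is Lemma \ref{lemma-2-17}. The intersection of any $\sigma\subset\DVW(F)$ with $\DV(F)$ is the face of $\sigma$ spanned by its $\V$-vertices; since $\sigma$ must also contain a $\W$-vertex, this face is proper. If $\dim\sigma\leq 2$, the face spanned by $\V$-vertices has dimension at most one; if $\dim\sigma=3$, Lemma \ref{lemma-2-17} forces $\sigma=\{V_1,V_2,W_1,W_2\}$, so the face is again a $1$-simplex. The same holds for $\DW(F)$, and so the intersection of any building block with $\DV(F)$ or $\DW(F)$ is at most $1$-dimensional. The dimension-zero case is trivial: types (c) and (d) contain exactly one $\V$-vertex and one $\W$-vertex, while a type (b)-$\W$ (resp.\ (b)-$\V$) block contains a single $\V$-disk (resp.\ $\W$-disk), namely the one shared by all faces of the underlying cluster.

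The real content is the $\ast$-shape and the identification of the star's center in the $1$-dimensional case. For a type (b)-$\V$ block, which is itself a $\V$-facial cluster, Lemma \ref{lemma-2-13} says each $\V$-face has the form $\{V_0,V_i,W\}$ with $V_0$ the non-separating $\V$-disk of the center, so every $\V$-$\V$ edge contains $V_0$; Lemma \ref{lemma-2-15} then supplies infinitely many hands, producing a $\ast$-shaped graph centered at $V_0$. For a type (a) block the Claim of Definition \ref{definition-3-3} expresses the block as $\bigcup_{V',W'}\Sigma_{V'W'}$ with $\Sigma_{V'W'}=\{V',\bar V,\bar W,W'\}$, and observation (a) of Definition \ref{definition-3-3} singles out $\bar V$ as the unique non-separating $\V$-disk among the vertices, so every $\V$-$\V$ edge of the block has the form $\{V',\bar V\}$; infiniteness again comes from Lemma \ref{lemma-2-15}. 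The argument for $\DW(F)$ is symmetric, and in both cases the star's center is the $\V$- (or $\W$-) component of the building block's center, and the star coincides with the intersection of $\DV(F)$ (or $\DW(F)$) with the $\V$-facial cluster (or $\W$-facial cluster) sitting in the building block. The point requiring care, and the only step that is not purely bookkeeping, is ruling out spurious $\V$-$\V$ edges joining two separating hand disks; this is exactly what observation (a) of Definition \ref{definition-3-3} together with Lemma \ref{lemma-2-8} forbids, since such an edge would yield three mutually disjoint $\V$-disks sharing the common $\W$-disk $\bar W$ with at most one of them non-separating.
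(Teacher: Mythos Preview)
Your proposal is correct and follows essentially the same approach as the paper: the paper's own ``proof'' is really just the summary paragraph preceding the lemma, which points to Lemma~\ref{lemma-3-8} for the covering statement, to the earlier contractibility results for each type, and to the Claim in Definition~\ref{definition-3-3} (plus the facial-cluster structure from Lemmas~\ref{lemma-2-13} and~\ref{lemma-2-15}) for the $\ast$-shape and identification of its center. Your write-up is more explicit than the paper's, in particular in spelling out why no $\V$--$\V$ edge can join two separating hand disks (the paper hides this inside Case~a of the Claim in Definition~\ref{definition-3-3}, where it is derived from Lemma~\ref{lemma-2-17} rather than directly from Lemma~\ref{lemma-2-8}), but the underlying argument is the same.
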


Now we introduce the next lemma.

\begin{lemma}\label{lemma-3-10}
Assume $M$ and $F$ as in Lemma \ref{lemma-3-1}.
If two building blocks of $\DVW(F)$ have GHSs of different types, then they cannot intersect each other.
\end{lemma}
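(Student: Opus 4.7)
The approach is a proof by contradiction: assume $B_1,B_2$ are building blocks whose GHSs have different types and $B_1\cap B_2\neq\emptyset$. The strategy is either to locate a weak reducing pair inside $B_1\cap B_2$, in which case Lemma~\ref{lemma-2-16} together with the definitions of building blocks around their centers forces the GHSs of $B_1$ and $B_2$ to agree up to isotopy and yields the contradiction, or else to derive a topological contradiction via Lemmas~\ref{lemma-2-7} and~\ref{lemma-2-8}.

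First I would dispose of the easy case: if $B_1\cap B_2$ contains any simplex of $\DVW(F)$ of positive dimension, then that simplex contains a weak reducing pair as a subface and we are done by Lemma~\ref{lemma-2-16}. Hence we may assume $B_1\cap B_2\subset \DV(F)\cup\DW(F)$, so its simplices are only edges between two $\V$-disks, edges between two $\W$-disks, or isolated vertices.

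The intermediate case is a shared edge $\{V',V''\}\subset\DV(F)$ (the shared $\W$-edge case is symmetric). Each $B_i$ contains a $\V$-face $\{V',V'',W_i\}$ and hence a $\V$-facial cluster $\varepsilon_{\V,i}$ by Lemma~\ref{lemma-2-15}; by Lemmas~\ref{lemma-2-8},~\ref{lemma-2-13} and Definition~\ref{definition-2-14}, one of $V',V''$---say $V''$---is the non-separating center $\V$-disk of both clusters and the other is a hand. If $W_1=W_2$, the entire $\V$-face is shared and we reduce to the previous step. If $W_1\neq W_2$, then the centers $(V'',W_1)$ and $(V'',W_2)$ of $\varepsilon_{\V,1}$ and $\varepsilon_{\V,2}$ determine the GHS types of $B_1$ and $B_2$ via Lemma~\ref{lemma-3-1}, and I would compare the admissible topological types of $W_1$ and $W_2$ (non-separating, separating cutting off a solid torus, or separating cutting off $(\text{torus})\times I$) and invoke Lemma~\ref{lemma-2-7} together with the $\W$-side version of Lemma~\ref{lemma-2-8} applied to the triple $V'', W_1, W_2$ to derive the required contradiction.

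The hardest case is when $B_1\cap B_2$ is a single shared vertex $D$. Here I would run a case analysis over the unordered pairs of distinct GHS types, using Lemma~\ref{lemma-3-1} to prescribe which topological disk types are admissible in each GHS type. Most cross-type pairs are ruled out immediately because the admissible types for $D$ are disjoint, so no such $D$ exists. The remaining subtle pairs are handled by passing from $D$ to the centers of $B_1$ and $B_2$ and applying Lemmas~\ref{lemma-2-7} and~\ref{lemma-2-8} to the triple consisting of $D$ together with the partners provided by the two centers. The main obstacle is precisely this last step: the triple need not be pairwise disjoint a priori, so one must first replace arbitrary partners by the canonical ones at the centers and use Lemma~\ref{lemma-2-7} to arrange the disjointness required before Lemma~\ref{lemma-2-8} can deliver the contradiction.
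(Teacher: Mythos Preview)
Your overall structure---reduce to finding two weak reducing pairs, one in each block, that share a vertex, and then derive a contradiction---matches the paper's proof. The paper's Claim establishes exactly this shared-vertex situation. Where your proposal diverges is in the final step, and there is a genuine gap there.

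Once you have weak reducing pairs $(\bar V,\bar W_1)\in\mathcal B_1$ and $(\bar V,\bar W_2)\in\mathcal B_2$ sharing the vertex $\bar V$ (say), you want to apply Lemma~\ref{lemma-2-8} to the triple $\bar V,\bar W_1,\bar W_2$. But Lemma~\ref{lemma-2-8} requires the three disks to be \emph{pairwise disjoint}, and there is no reason at all for $\bar W_1$ and $\bar W_2$ to be disjoint: they come from two unrelated building blocks. You recognize this obstacle yourself, but your proposed fix---``use Lemma~\ref{lemma-2-7} to arrange the disjointness required''---does not work. Lemma~\ref{lemma-2-7} is a statement about scars on $2$-sphere components after compressing along an already pairwise-disjoint collection; it gives you no mechanism to isotope $\bar W_1$ off $\bar W_2$. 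Concretely, take $\mathcal B_1$ of type~(a) and $\mathcal B_2$ of type~(d) sharing the non-separating $\V$-disk $\bar V$: the two $\W$-partners are both non-separating but arrange $\partial\bar V\cup\partial\bar W_i$ differently in $F$, and nothing in your toolkit forces them disjoint. The same issue arises in your ``shared edge'' case for the pair $W_1,W_2$.

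The paper closes this gap with an external tool you do not invoke: Lemma~8.4 of \cite{2}. That lemma says that any two weak reducing pairs sharing a common disk are connected by a finite chain $\Delta_0,\ldots,\Delta_n$ of $\V$- and $\W$-faces in which consecutive faces share a weak reducing pair. Applying Lemma~\ref{lemma-2-16} along the chain then forces the GHSs of $\mathcal B_1$ and $\mathcal B_2$ to be isotopic, hence of the same type---the contradiction. This chain argument is what substitutes for the disjointness you cannot arrange directly, and without it (or an equivalent surgery argument producing such a chain) your case analysis is incomplete.
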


\begin{proof}
Suppose that there are two building blocks $\mathcal{B}_1$ and $\mathcal{B}_2$ such that they have GHSs of different types and $\mathcal{B}_1\cap\mathcal{B}_2\neq\emptyset$.

Suppose that both $\mathcal{B}_1$ and $\mathcal{B}_2$ are of dimension one, i.e. one has a type (c) GHS and the other has a type (d) GHS.
Since a building block of $\DVW(F)$ having a type (c) GHS consists of separating disks, it cannot intersect a building block of $\DVW(F)$ having a type (d) GHS consisting of non-separating disks.
Hence, we can assume that at least one between $\mathcal{B}_1$ and $\mathcal{B}_2$, say $\mathcal{B}_1$, is of dimension at least two, i.e. $\mathcal{B}_1$ has a type (a), type (b)-$\W$ or type (b)-$\V$ GHS.
This means that there is a $\V$- or $\W$-facial cluster contained in $\mathcal{B}_1$.\\

\Claim{There exist two different weak reducing pairs such that one belongs to $\mathcal{B}_1$,  the other belongs to $\mathcal{B}_2$ and one shares a vertex with the other.}

\begin{proofc}
For any vertex in a building block of $\DVW(F)$, there is a weak reducing pair in the building block containing the vertex by definition.
Hence, if $\dim(\mathcal{B}_1\cap\mathcal{B}_2)=0$, then the proof is obvious.
From now on, assume that $\dim(\mathcal{B}_1\cap\mathcal{B}_2)\geq 1$.
If $\mathcal{B}_1\cap\mathcal{B}_2$ contains a weak reducing pair $\delta$, then there is a $\V$- or $\W$-face $\Delta$ in $\mathcal{B}_1$ containing $\delta$ by the definition of a building block having a type (a), type (b)-$\W$ or type (b)-$\V$ GHS (consider $\delta \subset \Sigma_{V'W'}$ in the proof of Claim in Definition \ref{definition-3-3} for type (a) GHS).
Therefore, we can find two different weak reducing pairs $\delta$ and $\delta'$ in $\Delta$ sharing a vertex such that $\delta\subset \mathcal{B}_1\cap\mathcal{B}_2$ and $\delta'\subset\mathcal{B}_1$, leading to the result.

Hence, assume that every $1$-simplex of $\mathcal{B}_1\cap\mathcal{B}_2$ is not a weak reducing pair.
Without loss of generality, assume that there is a $1$-simplex $\delta$ in $\mathcal{B}_1\cap\mathcal{B}_2$ belonging to $\DV(F)$.
Since $\mathcal{B}_1$ and $\mathcal{B}_2$ have GHSs of different types and the dimension of the intersection of them in $\DV(F)$ is at least one, we can assume that $\mathcal{B}_1$ has a type (a) GHS and $\mathcal{B}_2$ has a type (b)-$\V$ GHS without loss of generality.
Moreover, Lemma \ref{lemma-3-9} guarantees that  we can choose a $\V$-face $\Delta_1\subset \mathcal{B}_1$ containing the center $(\bar{V}_1,\bar{W}_1)$ of $\mathcal{B}_1$ and a $\V$-face $\Delta_2\subset \mathcal{B}_2$ containing the center $(\bar{V}_2,\bar{W}_2)$ of $\mathcal{B}_2$ such that $\delta\subset\Delta_1$ and $\delta\subset\Delta_2$.
Hence, $\delta$ contains $\bar{V}_1$ and $\bar{V}_2$.
But if $\bar{V}_1\neq\bar{V}_2$, then $\delta$ consists of two non-separating disks, violating Lemma \ref{lemma-2-8} by considering two $\V$-disks of $\Delta_1$.
Therefore, we get $\bar{V}_1=\bar{V}_2$, i.e. the centers of $\mathcal{B}_1$ and $\mathcal{B}_2$ share a vertex $\bar{V}_1=\bar{V}_2$. 
Since $\mathcal{B}_1$ has a type (a) GHS and $\mathcal{B}_2$ has a type (b)-$\V$ GHS, $\bar{W}_1$ is non-separating and $\bar{W}_2$ is separating in $\W$, i.e. two centers are different weak reducing pairs.
This completes the proof of Claim.
\end{proofc}

By Claim and Lemma 8.4 of \cite{2}, there is a sequence $\Delta_0$, $\cdots$, $\Delta_n$ of $\V$- and $\W$-faces such that $\Delta_0$ contains a weak reducing pair of $\mathcal{B}_1$, $\Delta_n$ contains a weak reducing pair of  $\mathcal{B}_2$, and $\Delta_{i-1}$ and $\Delta_i$ share a weak reducing pair for $1\leq i \leq n$.
But if we use Lemma \ref{lemma-2-16} inductively from $\Delta_0$ to $\Delta_n$, then $\mathcal{B}_1$ and $\mathcal{B}_2$ have the same GHS up to isotopy.
This means that $\mathcal{B}_1$ and $\mathcal{B}_2$ have GHSs of the same type, violating the assumption. 
This completes the proof.
\end{proof} 

\begin{lemma}\label{lemma-3-11}
Assume $M$ and $F$ as in Lemma \ref{lemma-3-1}.
If two different building blocks of $\DVW(F)$ have GHSs of the same type, then the centers of them cannot intersect each other.

\end{lemma}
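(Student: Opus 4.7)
The approach is a proof by contradiction, by case analysis on the type of GHS. Suppose $\mathcal{B}_1 \neq \mathcal{B}_2$ have the same type and their centers $c_1 = (\bar V_1, \bar W_1)$ and $c_2 = (\bar V_2, \bar W_2)$ share a vertex. Without loss of generality $\bar V_1 = \bar V_2 = \bar V$, so $\bar W_1 \neq \bar W_2$ as vertices of $\D(F)$ (equal centers would force $\mathcal{B}_1 = \mathcal{B}_2$ by Definitions~\ref{definition-3-3}, \ref{definition-3-5}, \ref{definition-3-6}); the symmetric case with a shared $\W$-disk is handled identically. The central tool is cocore uniqueness: the region $R_i \subset \W$ between $F$ and the interior-pushed thick level $\bar F_{\bar W_i}$ is obtained from $F\times I$ by attaching one $2$-handle along $\partial \bar W_i$, and since $F$ is incompressible in $F\times I$ every compressing disk of $F$ in $R_i$ is isotopic to $\bar W_i$. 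Hence, whenever I can show $\bar F_{\bar W_1} \simeq \bar F_{\bar W_2}$ in $\W$, an ambient isotopy extension gives $\bar W_1 \simeq \bar W_2$, contradicting $\bar W_1 \neq \bar W_2$.

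For types (a), (b)-$\W$, and (b)-$\V$, each center lies in some $\V$- or $\W$-face of its building block, so Lemma~8.4 of \cite{2} (applied exactly as in the proof of Lemma~\ref{lemma-3-10}) produces a sequence $\Delta_0, \ldots, \Delta_n$ of $\V$- and $\W$-faces with $\Delta_0 \ni c_1$, $\Delta_n \ni c_2$, and consecutive faces sharing a weak reducing pair. Iterating Lemma~\ref{lemma-2-16} along the chain shows that $\bar F_{\bar W_1}$ and $\bar F_{\bar W_2}$ are isotopic in $\W$, and cocore uniqueness closes the case.

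For types (c) and (d), Lemma~\ref{lemma-3-7} rules out the face chain. If $\bar W_1$ and $\bar W_2$ admit disjoint representatives, then $\bar V, \bar W_1, \bar W_2$ are three mutually disjoint disks and Lemma~\ref{lemma-2-8} forces either $\bar W_1 \simeq \bar W_2$ or a restrictive alternative: in type (d) both $\partial \bar W_i$ are non-separating in $F$, so cannot bound a punctured torus; in type (c) both are separating in $F$, and a curve non-separating in a punctured-torus subsurface yet separating in $F$ is excluded by a short connectivity argument on $F = (\text{punctured torus}) \cup (\text{complement})$. In either case only $\bar W_1 \simeq \bar W_2$ survives, a contradiction. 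If no disjoint representatives exist, then in type (c) both $\bar W_i$ must cut off product neighborhoods of the \emph{same} torus component of $\partial_-\W$ (otherwise the neighborhoods, and hence the disks, would be disjoint), and uniqueness of the product neighborhood of a $\partial_-$ torus in a compression body yields $\bar W_1 \simeq \bar W_2$.

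The main obstacle I expect is the non-disjoint subcase of type (d), where no face chain, no three-disk disjointness to invoke Lemma~\ref{lemma-2-8}, and no torus-neighborhood product structure is available. My plan there is to exploit the mod-$2$ homological constraint $[\partial \bar V] = [\partial \bar W_i]$ in $H_1(F;\mathbb{Z}/2)$ together with a direct analysis of the pair of thin-level tori produced by each weak reduction: the two thin tori, together with the fixed isotopy class of $\bar V$, should pin down the thick level $\bar F_{\bar W}$ in $\W$ up to isotopy, after which cocore uniqueness again gives $\bar W_1 \simeq \bar W_2$ and the contradiction.
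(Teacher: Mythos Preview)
Your handling of types (c) and (d) inverts the logic and leaves a genuine gap. Lemma~8.4 of \cite{2} (used exactly as in the proof of Lemma~\ref{lemma-3-10}) \emph{guarantees} a sequence $\Delta_0,\ldots,\Delta_n$ of $\V$- and $\W$-faces whenever two weak reducing pairs share a vertex; it does not require the endpoints to already lie in faces. Since $\Delta_0$ would then be a $\V$- or $\W$-face containing the type~(c) or type~(d) center $(\bar V_1,\bar W_1)$, Lemma~\ref{lemma-3-7} yields an immediate contradiction. That is the paper's entire argument for Cases~c and~d. You treated ``Lemma~\ref{lemma-3-7} rules out the face chain'' as an obstruction to the method rather than as the contradiction itself, and the elaborate disjoint/non-disjoint analysis that follows is unnecessary --- and, as you acknowledge, incomplete in the non-disjoint subcase of type~(d).

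For types~(a) and~(b) your route differs from the paper's. The paper argues combinatorially along the face chain: in Case~a it reads off the path $\gamma_{\mathcal D_\W}$ on $\partial D^2$ and uses the uniqueness of the meridian of a solid torus at each step to force $\bar W_1\simeq\bar W_2$; in Case~b it shows step by step that $\Delta_0,\Delta_1$ are forced to be $\W$-faces and then derives a separating/non-separating contradiction via Lemma~\ref{lemma-2-13}. Your idea of transporting the thick level and then recovering the disk is closer in spirit to the proof of Theorem~\ref{theorem-3-16}, but your ``cocore uniqueness'' claim is false as stated: in $R_i=(F\times I)\cup(\text{one }2\text{-handle})$ the cocore is not the only essential disk, since band-sums of two parallel copies of the cocore give infinitely many separating essential disks. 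What is true (and what Theorem~\ref{theorem-3-16} actually uses) is that such a compression body has a \emph{unique non-separating} disk when $\partial_-R_i$ is connected of genus two, and a \emph{unique} essential disk when $\partial_-R_i$ consists of a genus-two surface together with a torus. If you replace your blanket uniqueness by these two precise statements and track which situation each center type falls into, the argument can be repaired; as written it does not go through.
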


\begin{proof}
Let $\mathcal{B}_1$ and $\mathcal{B}_2$ be different building blocks having GHSs of the the same type.
Suppose that the centers of them intersect each other.

\Case{a} $\mathcal{B}_1$ and $\mathcal{B}_2$ have type (a) GHSs.

If the centers of them are the same, then the $\V$-facial cluster in  $\mathcal{B}_1$ containing the center is the same as the $\V$-facial cluster in  $\mathcal{B}_2$ containing the center.
The previous argument also holds for the $\W$-facial clusters in $\mathcal{B}_1$ and $\mathcal{B}_2$ containing the common center.
But this implies that two building blocks are the same by Definition \ref{definition-3-3}, violating the assumption.

Hence, one center shares only one vertex with the other.
Let $(\bar{V}_1,\bar{W}_1)$ and $(\bar{V}_2,\bar{W}_2)$ be the centers of $\mathcal{B}_1$ and $\mathcal{B}_2$ respectively and assume that $\bar{V}_1=\bar{V}_2$ and $\bar{W}_1$ is not isotopic to $\bar{W}_2$ in $\W$ without loss of generality.
Here, Lemma 8.4 of \cite{2} implies that there exists a sequence of $\V$- and $\W$-faces $\Delta_0$, $\cdots$, $\Delta_n$ such that $(\bar{V}_1,\bar{W}_1)$ is contained in $\Delta_0$, $(\bar{V}_2,\bar{W}_2)$ is contained in $\Delta_n$, and $\Delta_{i-1}$ and $\Delta_i$ share a weak reducing pair for $1\leq i \leq n$.
Assume that $n$ is such a smallest integer.
Then there is a simplicial map $\iota: (\text{triangulated})~D^2 \to \DVW(F)$ such that $\iota(\partial D^2)$ contains $(\bar{V}_1,\bar{W}_1)\cup(\bar{V}_2,\bar{W}_2)$ and $\iota(D^2)=\Delta_0\cup\cdots\cup \Delta_n$, where each triangle of $D^2$ corresponds to exactly one of $\Delta_i$ for some $0\leq i \leq n$ and we denote this triangle as $\bar{\Delta}_i$, i.e. $\iota(\bar{\Delta}_i)=\Delta_i$ (see Figure \ref{figure8}, a white one is mapped into a $\V$-face and a shaded one is mapped into a $\W$-face).
\begin{figure}
\includegraphics[width=6cm]{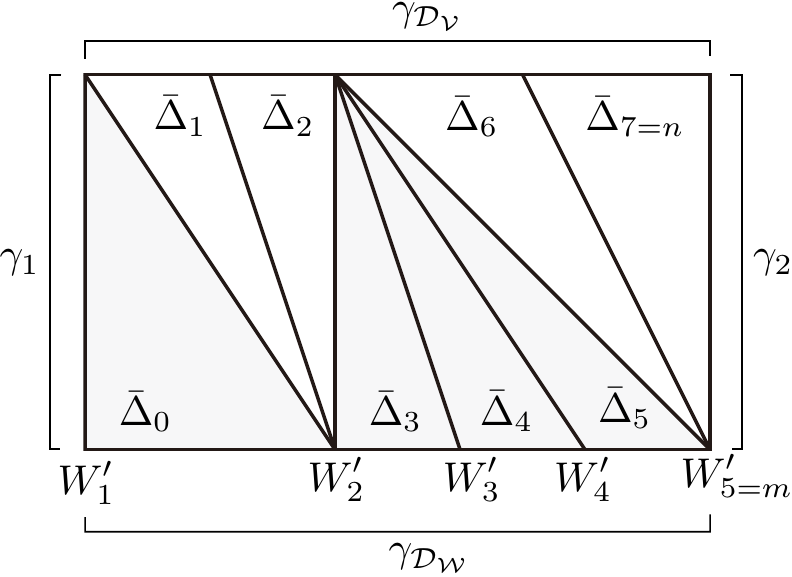}
\caption{a triangulation of $D^2$ \label{figure8}}
\end{figure}
Here, $\gamma_1=\bar{\Delta}_0\cap\partial D^2$ and $\gamma_2=\bar{\Delta}_n\cap\partial D^2$ are mapped into $(\bar{V}_1,\bar{W}_1)$ and $(\bar{V}_2,\bar{W}_2)$ by $\iota$ respectively.
We can see that $n\geq 1$ otherwise $\Delta_0$ is a $\W$-face containing two non-separating $\W$-disks $\bar{W}_1$ and $\bar{W}_2$, violating Lemma \ref{lemma-2-8}.
Hence, $\operatorname{cl}(\partial D^2-(\gamma_1\cup\gamma_2))$ consists of two parts (possibly empty) $\gamma_{\mathcal{D}_\V}$  and $\gamma_{\mathcal{D}_\W}$, where $\iota(\gamma_{\mathcal{D}_\V})\subset \DV(F)$ and $\iota(\gamma_{\mathcal{D}_\W})\subset \DW(F)$.
Note that $\gamma_{\mathcal{D}_\W}\neq \emptyset$ since $\bar{W}_1$ is not isotopic to $\bar{W}_2$.
Let $W_1'-\cdots-W_m'$ be $\gamma_{\mathcal{D}_\W}$, where each edge $W_{i-1}'-W_i'$ is mapped into a $1$-simplex in $\DW(F)$ by $\iota$, so $\iota(W_1')=\bar{W}_1$ and $\iota(W_m')=\bar{W}_2$.
We can see that $\iota(W_i')$ is non-separating for odd $i$ and it is separating for even $i$ since (i) $\iota(W_1')=\bar{W}_1$, i.e. a non-separating disk and (ii) $\iota(W_{i-1}'-W_i')$ is an edge of a $\W$-face consisting of $\W$-disks and therefore one of $\iota(W_{i-1}')$ and $\iota(W_i')$ is non-separating and the other is separating by Lemma \ref{lemma-2-8}.
Since $\iota(W_1')=\bar{W}_1$ is not isotopic to $\iota(W_m')=\bar{W}_2$, we get $m\geq 3$.
If we consider $\iota(W_3')$, then $\iota(W_2'-W_3')$ is an edge of a $\W$-face consisting of $\W$-disks and therefore $\iota(W_3')$ comes from the meridian disk of the solid torus which $\iota(W_2')$ cuts from $\W$ by Lemma \ref{lemma-2-8}.
This argument also holds for $\iota(W_1')$ by considering $\iota(W_1'-W_2')$.
But the meridian disk of a solid torus is uniquely determined up to isotopy, i.e. $\iota(W_3')$ is isotopic to $\iota(W_1')=\bar{W}_1$ in $\W$.
If we repeat this argument until we meet $\iota(W_m')=\bar{W}_2$, then we conclude that $\bar{W}_1$ is isotopic to $\bar{W}_2$, violating the assumption.

\Case{b} $\mathcal{B}_1$ and $\mathcal{B}_2$ have type (b)-$\W$ GHSs (symmetrically type (b)-$\V$ GHSs).

If we use the same argument in the first paragraph of Case a, then we can see that one center shares only one vertex with the other.
Recall that the the center of a building block of $\DVW(F)$ having a type (b)-$\W$ GHS consists of a separating $\V$-disk which cuts off $(\text{torus})\times I$ from $\V$ and a non-separating $\W$-disk.

If these two centers have the common $\V$-disk, then the other two disks of the two centers are non-separating $\W$-disks.
Hence, if we use the same arguments in the second paragraph of Case a, then the two centers are the same, violating the assumption.

Therefore, two centers have the common $\W$-disk.
If we use Lemma 8.4 of \cite{2}, there is a sequence $\Delta_0$, $\cdots$, $\Delta_n$ of $\V$- and $\W$-faces such that $\Delta_0$ contains the center of $\mathcal{B}_1$, $\Delta_n$ contains the center of $\mathcal{B}_2$, and $\Delta_{i-1}$ and $\Delta_i$ share a weak reducing pair for $1\leq i \leq n$.
Assume that $n$ is such a smallest integer.
If $n=0$, then $\Delta_0$ contains two separating disks from $\V$, violating Lemma \ref{lemma-2-8}.
Therefore, we get $n\geq 1$.

Suppose that $\Delta_0$ is a $\V$-face.
In this case, the third vertex of $\Delta_0$ other than the center of $\mathcal{B}_1$ must be a meridian disk of the solid torus which the separating $\V$-disk of $\Delta_0$ cuts off from $\V$ by Lemma \ref{lemma-2-8}, violating the assumption that it cuts off $(\text{torus})\times I$ from $\V$.

Hence, $\Delta_0$ is a $\W$-face.
Let us consider $\Delta_1$. 
Since $n$ is such a smallest integer, the third vertex of $\Delta_0$ other than the center of $\mathcal{B}_1$ must belong to $\Delta_1$.
Moreover, the third vertex is a separating $\W$-disk by Lemma \ref{lemma-2-8}.
Hence, the weak reducing pair $\Delta_0\cap\Delta_1$ consists of separating disks.
Assume that $\Delta_1$ is a $\V$-face.
Then the third vertex of $\Delta_1$ other than $\Delta_0\cap \Delta_1$ must be a meridian disk of the solid torus which the separating $\V$-disk of $\Delta_1$ cuts off from $\V$ by Lemma \ref{lemma-2-8}.
But the separating $\V$-disk of $\Delta_1$ comes from the $\V$-disk of the center of $\mathcal{B}_1$, violating the assumption that it cuts off $(\text{torus})\times I$ from $\V$.
Therefore, $\Delta_1$ is a $\W$-face.
From $\Delta_0\cup \Delta_1$, we get a sequence of $\W$-disks $W_0$, $W_1$, $W_2$, where $\{W_0,W_1\}\subset \Delta_0$, $\{W_1,W_2\}\subset \Delta_1$.
Here, the assumption that $n$ is such a smallest integer means that $W_0\neq W_2$.
Since two different $\W$-faces $\Delta_0$ and $\Delta_1$ share a weak reducing pair, the $\W$-disk of the common weak reducing pair $\Delta_0\cap\Delta_1$ is non-separating by Lemma \ref{lemma-2-13}, i.e. $W_1$ is non-separating, violating that $\Delta_0\cap\Delta_1$ consists of separating disks.

\Case{c} $\mathcal{B}_1$ and $\mathcal{B}_2$ have type (c) GHSs.

In this case, each of $\mathcal{B}_1$ and $\mathcal{B}_2$ is the center itself.
Hence, these two centers cannot be the same, i.e. two centers share a vertex.
If we use Lemma 8.4 of \cite{2}, then there is a sequence $\Delta_0$, $\cdots$, $\Delta_n$ of $\V$- and $\W$-faces such that $\Delta_0$ contains the center of $\mathcal{B}_1$, $\Delta_n$ contains the center of $\mathcal{B}_2$, and $\Delta_{i-1}$ and $\Delta_i$ share a weak reducing pair for $1\leq i \leq n$.
But this violates Lemma \ref{lemma-3-7}.

\Case{d} $\mathcal{B}_1$ and $\mathcal{B}_2$ have type (d) GHSs.

If we use the same arguments in Case c, then we get a contradiction.

This completes the proof.
\end{proof}

\begin{lemma}\label{lemma-3-12}
Assume $M$ and $F$ as in Lemma \ref{lemma-3-1}.
If two different building blocks of $\DVW(F)$ have GHSs of the same type, then they cannot intersect each other.
\end{lemma}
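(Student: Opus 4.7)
The plan is to leverage Lemma~\ref{lemma-3-11} by showing that any common vertex of two distinct same-type building blocks must force a shared vertex between the two centers, which contradicts Lemma~\ref{lemma-3-11}. Suppose $\mathcal{B}_1 \neq \mathcal{B}_2$ are building blocks of the same type that share at least one vertex $v$, and denote their centers by $(\bar V_1, \bar W_1)$ and $(\bar V_2, \bar W_2)$.

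First I would dispatch the low-dimensional cases. For types (c) and (d), $\mathcal{B}_i$ equals its center $(\bar V_i, \bar W_i)$ by Definition~\ref{definition-3-6}, so $v$ is already a shared vertex of the two centers, contradicting Lemma~\ref{lemma-3-11}. For type (b)-$\W$, $\mathcal{B}_i$ is a $\W$-facial cluster, so the common-$\V$-disk property of $\W$-facial clusters forces $\bar V_i$ to be the unique $\V$-disk vertex in $\mathcal{B}_i$; the unique non-separating $\W$-disk in $\mathcal{B}_i$ is $\bar W_i$ by Definition~\ref{definition-2-14}; and every separating $\W$-disk hand uniquely recovers $\bar W_i$ as the meridian of the solid torus it cuts off from $\W$ by Lemma~\ref{lemma-2-8}. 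Running through the three possibilities for $v$ (a $\V$-disk, the non-separating $\W$-disk, or a separating hand $\W$-disk) each forces $\bar V_1 = \bar V_2$ or $\bar W_1 = \bar W_2$, contradicting Lemma~\ref{lemma-3-11}. Type (b)-$\V$ is symmetric.

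For type (a), Lemma~\ref{lemma-3-9} tells me that $\mathcal{B}_i \cap \DV(F)$ coincides with $\varepsilon_\V^{(i)} \cap \DV(F)$, where $\varepsilon_\V^{(i)}$ is the $\V$-facial cluster inside $\mathcal{B}_i$ containing the center (and similarly on the $\W$-side). Using observation (\ref{definition-3-3-preclaim-a}) of Definition~\ref{definition-3-3}, the only non-separating vertices of $\mathcal{B}_i$ are $\bar V_i$ and $\bar W_i$, and every other vertex is a separating hand disk of $\varepsilon_\V^{(i)}$ or $\varepsilon_\W^{(i)}$; Definition~\ref{definition-2-14} ensures each such hand uniquely recovers its corresponding center disk. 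Case-checking on the nature of $v$ (non-separating $\V$-disk, non-separating $\W$-disk, separating hand $\V$-disk, separating hand $\W$-disk) again produces a shared vertex between the two centers and so contradicts Lemma~\ref{lemma-3-11}.

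The main obstacle is the type (a) case, whose vertex set is the richest, combining the common center with hands coming from both a $\V$- and a $\W$-facial cluster. The crux is that the uniqueness of the meridian disk of a solid torus, packaged into Definition~\ref{definition-2-14}, ensures every separating hand vertex recovers its center disk; together with Lemma~\ref{lemma-3-9}, this lets every possible position of the shared vertex descend to a coincidence at the level of centers and trigger Lemma~\ref{lemma-3-11}.
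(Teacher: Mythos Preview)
Your proposal is correct and follows essentially the same route as the paper: reduce to Lemma~\ref{lemma-3-11} by showing that any shared vertex forces the two centers to share a vertex, using the uniqueness of the meridian disk (via Definition~\ref{definition-2-14}/Lemma~\ref{lemma-2-8}) to recover the center disk from a separating hand vertex. The paper organizes the case split slightly differently (it treats a shared $\V$-disk and a shared $\W$-disk separately rather than splitting by separating/non-separating first), but the substance is the same.
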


\begin{proof}
Let $\mathcal{B}_1$ and $\mathcal{B}_2$ be different building blocks having GHSs of the the same type and assume that $\mathcal{B}_1\cap\mathcal{B}_2\neq\emptyset$.
Since a building block of $\DVW(F)$ having a type (c) or type (d) GHS is the center of the building block itself by Definition \ref{definition-3-6}, they cannot intersect each other by Lemma \ref{lemma-3-11}.
Hence, we only need to consider when $\mathcal{B}_1$ and $\mathcal{B}_2$ have type (a), type (b)-$\V$ or type (b)-$\W$ GHSs. 

\Case{a} $\mathcal{B}_1$ and $\mathcal{B}_2$ have type (a) GHSs.

Without loss of generality, we can assume that there is a vertex $V\in \DV(F)$ in $\mathcal{B}_1\cap\mathcal{B}_2$.
If $V$ is a non-separating disk, then $V$ must be the $\V$-disk of the center of $\mathcal{B}_i$ for $i=1,2$ by Definition \ref{definition-3-3}, i.e. the centers of $\mathcal{B}_1$ and $\mathcal{B}_2$ intersect each other, violating Lemma \ref{lemma-3-11}.
Hence, $V$ is a separating disk, i.e. there are two $\V$-faces $\Delta_1\subset\mathcal{B}_1$ and $\Delta_2\subset\mathcal{B}_2$ such that $\Delta_i$ contains $V$ and the center of $\mathcal{B}_i$ for $i=1,2$ by Lemma \ref{lemma-3-9}.
Let $\bar{V}_i$ be the $\V$-disk of the center of $\mathcal{B}_i$ for $i=1,2$.
Then $\bar{V}_i$ is a meridian disk of the solid torus which $V$ cuts off from $\V$ for $i=1,2$ by applying Lemma \ref{lemma-2-8} to $\Delta_1$ and $\Delta_2$.
But the meridian disk in a solid torus is unique up to isotopy, i.e. $\bar{V}_1$ is isotopic to $\bar{V}_2$ in $\V$.
This means that the centers of $\mathcal{B}_1$ and $\mathcal{B}_2$ intersect each other, violating Lemma \ref{lemma-3-11}.

\Case{b} $\mathcal{B}_1$ and $\mathcal{B}_2$ have type (b)-$\V$ GHSs (symmetrically type (b)-$\W$ GHSs).

If there is a vertex $W\in \DW(F)$ in $\mathcal{B}_1\cap\mathcal{B}_2$, then the centers of $\mathcal{B}_1$ and $\mathcal{B}_2$ intersect each other since $\mathcal{B}_1$ and $\mathcal{B}_2$ are just $\V$-facial clusters, violating Lemma \ref{lemma-3-11}.
Hence, the intersection must come from $\DV(F)$ and choose a disk $V$ in the intersection.
If we use the same arguments as in Case a, then we get a contradiction.

This completes the proof.
\end{proof}

The next theorem describes each component of $\DVW(F)$ exactly.

\begin{theorem}\label{theorem-3-13}
Let $(\V,\W;F)$ be a weakly reducible, unstabilized, genus three Heegaard splitting in an orientable, irreducible $3$-manifold $M$.
Then every component of $\DVW(F)$ is just a building block of $\DVW(F)$.\end{theorem}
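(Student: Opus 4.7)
The plan is to assemble the lemmas established earlier in this section into a short formal argument. First, I will invoke Lemma \ref{lemma-3-8} to conclude that the building blocks cover $\DVW(F)$, since every simplex of $\DVW(F)$ lies in at least one building block. Next, I will use Lemmas \ref{lemma-3-10} and \ref{lemma-3-12} together: the former rules out intersections between building blocks whose associated GHSs have different types, and the latter rules out intersections between two \emph{distinct} building blocks whose associated GHSs have the same type. Combining these gives that any two distinct building blocks are disjoint (they share not even a vertex), so $\DVW(F)$ decomposes as a disjoint union of building blocks in the simplicial sense.

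To upgrade this combinatorial partition into a statement about connected components, I will invoke Lemma \ref{lemma-3-9}, which guarantees that each building block is contractible and, in particular, path-connected. A path-connected subset must lie in a single connected component of $\DVW(F)$; conversely, because distinct building blocks share no points at all, no single component can contain two of them. These two observations force each building block to coincide with exactly one connected component of $\DVW(F)$, and every component to arise this way, which is precisely the claim of Theorem \ref{theorem-3-13}.

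The main obstacle, as far as new work for this theorem is concerned, is essentially nil: once the covering statement of Lemma \ref{lemma-3-8} and the disjointness statements of Lemmas \ref{lemma-3-10} and \ref{lemma-3-12} are in place, only a two-sentence topological synthesis is needed. Thus the real difficulty was already paid for in those earlier lemmas, which needed careful case analysis tracking whether the relevant $\V$- or $\W$-disks are separating or non-separating and exploiting the uniqueness of meridian disks of solid tori; Theorem \ref{theorem-3-13} itself is the formal payoff.
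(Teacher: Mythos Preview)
Your argument is correct and follows exactly the paper's approach: invoke Lemma~\ref{lemma-3-8} for the covering and Lemmas~\ref{lemma-3-10} and~\ref{lemma-3-12} for pairwise disjointness of building blocks, then conclude. Your explicit appeal to Lemma~\ref{lemma-3-9} for the connectedness of each building block is a small clarifying addition that the paper leaves implicit.
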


\begin{proof}
Let us consider a component $\mathcal{C}$ of $\DVW(F)$.
Then $\mathcal{C}$ is contained in a union of building blocks of $\DVW(F)$  by Lemma \ref{lemma-3-8}.
But Lemma \ref{lemma-3-10} and Lemma \ref{lemma-3-12} imply  that there cannot be two or more adjacent building blocks contained in $\mathcal{C}$.
This completes the proof.
\end{proof}

Theorem \ref{theorem-3-13} means that there is a function from the components of $\DVW(F)$ to the isotopy classes of the generalized Heegaard splittings obtained by weak reductions from $(\V,\W;F)$ since every weak reducing pair in a building block of $\DVW(F)$ gives the same generalized Heegaard splitting after weak reduction up to isotopy.

The next lemma determines all centers of building blocks of $\DVW(F)$.

\begin{lemma}\label{lemma-3-14}
Assume $M$ and $F$ as in Lemma \ref{lemma-3-1}.
A weak reducing pair $(V,W)$ of $(\V,\W;F)$ is the center of a building block of $\DVW(F)$ if and only if each of $V$ and $W$ does not cut off a solid torus from the relevant compression body.
Moveover, every component of $\DVW(F)$ can be represented by a uniquely determined weak reducing pair, i.e. the center of the corresponding building block of $\DVW(F)$.
\end{lemma}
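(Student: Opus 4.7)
The plan is to verify the ``only if'' direction by inspecting the five definitions of center one by one, to establish the ``if'' direction by matching $(V,W)$ to the appropriate type via Lemma \ref{lemma-3-1}, and to derive the uniqueness statement from Theorem \ref{theorem-3-13} together with Lemmas \ref{lemma-3-10} and \ref{lemma-3-11}.

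For the forward direction I would take $(V,W)$ to be the center of some building block $\mathcal{B}$ and run through Definitions \ref{definition-3-3}, \ref{definition-3-5}, and \ref{definition-3-6}. In type (a), the center consists of non-separating disks by construction, so neither $V$ nor $W$ cuts off a solid torus. In type (b)-$\W$, Definition \ref{definition-3-5} forces the $\V$-disk to cut off $(\text{torus})\times I$ from $\V$ and the $\W$-disk to be non-separating; type (b)-$\V$ is symmetric. In types (c) and (d), the center equals $(V,W)$ itself, and Lemma \ref{lemma-3-1}(c),(d) says the two disks either both cut off $(\text{torus})\times I$ or are both non-separating. None of these five situations has a disk that cuts off a solid torus.

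For the converse, I would assume neither $V$ nor $W$ cuts off a solid torus and read off the GHS type of $(V,W)$ from Lemma \ref{lemma-3-1}. The only subcases compatible with this assumption are (a)(i), the first subcase of type (b)-$\W$, the third subcase of type (b)-$\V$, (c), and (d). In (a)(i), both disks are non-separating and $\partial V\cup\partial W$ is non-separating in $F$, so $(V,W)=(\bar V,\bar W)$ falls under item (\ref{definition-3-3-1}) of Definition \ref{definition-3-3} and is the center of the associated building block. In the type (b)-$\W$ case $V$ cuts off $(\text{torus})\times I$ and $W$ is non-separating, which matches precisely the description of a center in Definition \ref{definition-3-5}, and type (b)-$\V$ is symmetric. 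In cases (c) and (d) the weak reducing pair is declared its own building block and its own center by Definition \ref{definition-3-6}, so there is nothing left to check.

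For the uniqueness part, Theorem \ref{theorem-3-13} identifies each component of $\DVW(F)$ with a single building block, which carries a uniquely determined center by construction. If two distinct building blocks shared a common center, Lemma \ref{lemma-3-10} would force them to be of the same type, and then Lemma \ref{lemma-3-11} would contradict the intersection of their centers; hence the center provides a well-defined weak reducing pair representative of the component. The only subtle bookkeeping is to see that the five types of centers are mutually exclusive, which follows at once from reading off the separating / non-separating pattern of $(V,W)$ and, in the all-non-separating case, from whether $\partial V\cup\partial W$ separates $F$; thus no ambiguity arises when one tries to recognize which definition of center applies to a given pair, and this is the only place where a small but routine case analysis is genuinely needed.
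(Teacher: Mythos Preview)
Your proposal is correct and follows essentially the same approach as the paper: both the forward direction and the converse rest on the case analysis built into Definitions \ref{definition-3-3}--\ref{definition-3-6} and Lemma \ref{lemma-3-1}, and the uniqueness clause is deduced from Theorem \ref{theorem-3-13}. The only cosmetic difference is that the paper proves the converse by contraposition (any non-center weak reducing pair in a building block must contain a band-sum of two parallel copies of a center disk, hence a disk cutting off a solid torus), whereas you argue directly; your extra invocation of Lemmas \ref{lemma-3-10} and \ref{lemma-3-11} for uniqueness is harmless but unnecessary, since the center of a single building block is already unique by construction and Theorem \ref{theorem-3-13} identifies components with building blocks.
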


\begin{proof}
By the definitions of building blocks of $\DVW(F)$ and Lemma \ref{lemma-3-1}, we can see that every disk in the center of a building block must not cut off a solid torus from the relevant compression body.
Moreover, if we consider the definitions of building blocks of $\DVW(F)$ and Lemma \ref{lemma-3-1} again, then any weak reducing pair which is not the center of a building block must contain a disk which is a band sum of two parallel copies of a non-separating disk in the center, i.e. this disk cuts off a solid torus from the relevant compression body.
This completes the proof of the former statement.
The latter statement is obvious by Theorem \ref{theorem-3-13}.
This completes the proof.
\end{proof}

The next lemma means that there might be more than one building block corresponding to an isotopy class of generalized Heegaard splittings obtained by weak reduction, where it is a reinterpretation of Lemma 3.4 of \cite{11} in the sense of Theorem \ref{theorem-3-13}.

\begin{lemma}[Lemma 3.4 of \cite{11}]\label{lemma-3-15}
Let $(\V,\W;F)$ be a weakly reducible, unstabilized, genus three Heegaard splitting in an orientable, irreducible $3$-manifold $M$.
Suppose that there are two generalized Heegaard splittings $\mathbf{H}_1$ and $\mathbf{H}_2$ obtained by weak reductions along $(V_1,W_1)$ and $(V_2,W_2)$ from $(\V,\W;F)$ respectively such that the thick levels of $\mathbf{H}_1$ and $\mathbf{H}_2$ contained in one compression body are non-isotopic in the compression body. (It may be possible that $\mathbf{H}_1$ is the same as $\mathbf{H}_2$ in $M$ up to isotopy.)
Then the building block of $\DVW(F)$ containing $(V_1,W_1)$ is different from that containing $(V_2,W_2)$.
\end{lemma}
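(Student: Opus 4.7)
The plan is to prove the contrapositive: assume $(V_1,W_1)$ and $(V_2,W_2)$ lie in a common building block $\mathcal{B}$ of $\DVW(F)$, and deduce that the thick levels of $\mathbf{H}_1$ and $\mathbf{H}_2$ in each of $\V$ and $\W$ coincide up to isotopy in the corresponding compression body, contradicting the hypothesis. Once this is done, Theorem \ref{theorem-3-13} lets us pass freely between ``same building block'' and ``same component of $\DVW(F)$'', yielding the claimed statement. Alternatively, since the statement is flagged as a reinterpretation of Lemma 3.4 of \cite{11}, one may appeal to that lemma directly, but the self-contained argument below is cleaner in light of the machinery now available.

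I would split the argument according to the type of $\mathcal{B}$. If $\mathcal{B}$ has a type (c) or type (d) GHS, then by Definition \ref{definition-3-6} the building block equals its center, forcing $(V_1,W_1)=(V_2,W_2)$, and there is nothing to check. If $\mathcal{B}$ has a type (b)-$\V$ or type (b)-$\W$ GHS, then by Definition \ref{definition-3-5} the building block is a single $\V$- or $\W$-facial cluster, so the last clause of Lemma \ref{lemma-2-16} directly yields that the embedding of the thick level contained in $\V$ (resp.\ $\W$) is unchanged up to isotopy in $\V$ (resp.\ $\W$) as one moves between weak reducing pairs in $\mathcal{B}$.

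The substantive case is when $\mathcal{B}$ has a type (a) GHS with center $(\bar V,\bar W)$. Here my plan is to reuse the chain of $\V$- and $\W$-faces already built in the proof of the Claim inside Definition \ref{definition-3-3}: every weak reducing pair $\sigma$ in $\mathcal{B}$ either lies in $\varepsilon_\V\cup\varepsilon_\W$ (and is then connected to $(\bar V,\bar W)$ by a single face of that cluster) or lies in some $3$-simplex $\Sigma_{V'W'}=\{V',\bar V,\bar W,W'\}$ from which one passes $\sigma \to (\bar V,W') \to (\bar V,\bar W)$ through the $\V$-face $\{V',\bar V,W'\}$ and the $\W$-face $\{\bar V,\bar W,W'\}\subset\varepsilon_\W$. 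Applying Lemma \ref{lemma-2-16} to each face in the chain and using its ``Moreover'' clause shows that along each such single step, both the thick level in $\V$ and the thick level in $\W$ are preserved up to isotopy inside the respective compression body. Performing this reduction for both $i=1,2$ and then composing the resulting isotopies gives the desired identification between the thick levels of $\mathbf{H}_1$ and $\mathbf{H}_2$.

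The main obstacle I anticipate is making sure that the chain argument in the type (a) case really transports \emph{both} thick level embeddings simultaneously: a single $\V$-face naturally controls the thick level contained in $\V$, while a $\W$-face naturally controls the one in $\W$, and the whole point of Lemma \ref{lemma-2-16} is that, on a single face, \emph{both} sides are simultaneously under control. I would therefore take care, when writing the argument out, to invoke Lemma \ref{lemma-2-16} with its strong conclusion (not only GHS-isotopy in $M$ but thick-level-isotopy in each compression body) at every step of the chain. Once that is verified, the proof is essentially a bookkeeping exercise combining the building-block structure of Definitions \ref{definition-3-3}, \ref{definition-3-5}, \ref{definition-3-6} with Lemma \ref{lemma-2-16}.
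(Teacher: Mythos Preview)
Your proposal is correct and follows essentially the same approach as the paper: prove the contrapositive by invoking Lemma \ref{lemma-2-16} (including its ``Moreover'' clause on thick-level embeddings) along the face-chains already established in the definitions of building blocks. The paper's proof simply compresses your case analysis into the remark that this was ``already proved \ldots\ in the definitions of building blocks'' together with the observation that Lemma \ref{lemma-2-16} controls the thick-level embeddings at each step; your version just spells out the type (a), (b), (c)/(d) cases explicitly.
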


\begin{proof}
Since (i) we already proved that the the generalized Heegaard splitting obtained by weak reduction is unique up to isotopy in a building block of $\DVW(F)$ by using Lemma \ref{lemma-2-16} in the definitions of building blocks of $\DVW(F)$ and (ii) Lemma \ref{lemma-2-16} also guarantees that the embeddings of the thick levels do not vary in the relevant compression bodies up to isotopy, the contrapositive holds obviously.

This completes the proof.
\end{proof}

Finally, we reach Theorem \ref{theorem-3-16}

\begin{theorem}[the Structure Theorem]\label{theorem-3-16}
Let $(\V,\W;F)$ be a weakly reducible, unstabilized, genus three Heegaard splitting in an orientable, irreducible $3$-manifold $M$.
Then there is a function from the components of $\DVW(F)$ to the isotopy classes of the generalized Heegaard splittings obtained by weak reductions from $(\V,\W;F)$.
The number of components of the preimage of an isotopy class of this function is the number of ways to embed the thick level contained in $\V$ into $\V$ (or in $\W$ into $\W$).
This means that if we consider a generalized Heegaard splitting $\mathbf{H}$ obtained by weak reduction from $(\V,\W;F)$, then the way to embed the thick level of $\mathbf{H}$ contained in $\V$ into $\V$ determines the way to embed the thick level of $\mathbf{H}$ contained in $\W$ into $\W$ up to isotopy and vise versa.
\end{theorem}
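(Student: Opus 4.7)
The plan is to bootstrap off Theorem \ref{theorem-3-13} (every component of $\DVW(F)$ is a building block) together with Lemmas \ref{lemma-2-16}, \ref{lemma-3-14}, and \ref{lemma-3-15}. First, the existence of the function is already packaged in the remark immediately following Theorem \ref{theorem-3-13}: assign to each component $\mathcal{C}$ of $\DVW(F)$ the isotopy class $[\mathbf{H}_{(V,W)}]$ for any weak reducing pair $(V,W)\in\mathcal{C}$. Independence of the choice of $(V,W)$ follows from Lemma \ref{lemma-2-16} combined with the definitions of building blocks (Definitions \ref{definition-3-3}, \ref{definition-3-5}, \ref{definition-3-6}), which in the type-(a) and type-(b) cases organize everything through the facial clusters whose centers generate them.

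Next, for the cardinality statement, I fix an isotopy class $[\mathbf{H}]$ in the image and introduce a refinement map $\Phi_\V$ on $\Psi^{-1}([\mathbf{H}])$, sending a component $\mathcal{C}$ to the isotopy class in $\V$ of the thick level contained in $\V$ of the GHS corresponding to the unique center $(\bar{V},\bar{W})$ of $\mathcal{C}$ (Lemma \ref{lemma-3-14}); the final sentence of Lemma \ref{lemma-2-16} guarantees that $\Phi_\V$ is well-defined. I would then show $\Phi_\V$ is a bijection onto the set of $\V$-isotopy classes of embeddings of $\bar{F}_V$ realizing $\mathbf{H}$. Surjectivity: any such embedding arises from compressing $F$ along some $\bar{V}\subset\V$, which paired with a compatible $\bar{W}\subset\W$ (whose existence is guaranteed because the embedding actually arose from some weak reduction producing $\mathbf{H}$) gives a weak reducing pair lying in a unique building block. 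Injectivity: the embedding of $\bar{F}_V$ in $\V$ determines $\bar{V}$ up to isotopy in $\V$ (as the cocore of the unique $1$-handle in the product region between $F$ and $\bar{F}_V$), and the fixed class $[\mathbf{H}]$ pins down the inner thin level $\bar{F}_{VW}$ up to isotopy in $M$, which in turn determines the compressing disk $\bar{W}$ up to isotopy in $\W$; hence the centers of $\mathcal{C}_1$ and $\mathcal{C}_2$ agree, so Lemma \ref{lemma-3-14} forces $\mathcal{C}_1 = \mathcal{C}_2$.

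The symmetric construction yields a bijection $\Phi_\W$ between $\Psi^{-1}([\mathbf{H}])$ and the isotopy classes of embeddings of $\bar{F}_W$ in $\W$. Composing $\Phi_\W \circ \Phi_\V^{-1}$ is the canonical bijection between $\V$- and $\W$-embeddings realizing $\mathbf{H}$, which is exactly the ``determines / vice versa'' assertion at the end of the statement.

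The main obstacle I foresee is the injectivity of $\Phi_\V$, specifically the reconstruction of $\bar{W}$ from the pair ($\V$-embedding, $[\mathbf{H}]$). This requires carefully tracing how $\bar{F}_{VW}$, fixed in $M$ up to isotopy by $[\mathbf{H}]$, together with the fixed $\bar{V}$, determines $\bar{W}$ as the unique compressing disk in the compression body of $\W$ bounded by $\bar{F}_{VW}$ on one side; Lemma \ref{lemma-2-7} (to rule out spurious $S^2$ components of $F_{\bar{V}\bar{W}}$) and the partial-order structure of Definition \ref{definition-2-9} are the essential tools for this step.
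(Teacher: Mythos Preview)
Your construction of the function $\Psi$ and your setup of $\Phi_\V$ are fine and match the paper. The divergence is in the injectivity argument, and here you make life harder than necessary and leave a real gap.

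You try to reconstruct the \emph{entire} center $(\bar V,\bar W)$ from the $\V$-embedding together with $[\mathbf H]$. The reconstruction of $\bar V$ is essentially correct, though your phrase ``cocore of the unique $1$-handle'' hides a case split: if $\V'$ (the region in $\V$ between $F$ and $\bar F_V$) has connected genus-two minus boundary, it has many separating compressing disks, and you need Lemma~\ref{lemma-3-14} (centers contain no disk cutting off a solid torus) to pick out the unique non-separating one; if $\partial_-\V'$ is a genus-two surface together with a torus, there is a unique compressing disk. The paper does exactly this case analysis.

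The problematic step is the reconstruction of $\bar W$. Your argument relies on $\bar F_{VW}$ being pinned down by $[\mathbf H]$, but $[\mathbf H]$ only determines $\bar F_{VW}$ up to ambient isotopy in $M$, not up to isotopy \emph{relative to $F$}. Without control of the position of $\bar F_{VW}$ inside $\W$, you cannot read off $\bar W$ as ``the'' compressing disk of the compression body it bounds in $\W$; there is no such canonical compression body until the relative position is fixed. Lemma~\ref{lemma-2-7} and Definition~\ref{definition-2-9} do not supply this.

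The paper sidesteps this entirely: once $\bar V_1=\bar V_2$ in $\V$, the two centers share a vertex, hence the two building blocks intersect, hence by Theorem~\ref{theorem-3-13} (components $=$ building blocks) they coincide. No reconstruction of $\bar W$ is needed. If you replace your $\bar W$-step with this one-line appeal to Theorem~\ref{theorem-3-13}, your proof goes through and becomes essentially the paper's.
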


\begin{proof}
By the definitions of building blocks of $\DVW(F)$ and Theorem \ref{theorem-3-13}, the first statement is obvious.
Hence, we will prove the second statement.

Suppose that there are two generalized Heegaard splittings $\mathbf{H}_1$ and $\mathbf{H}_2$ obtained by weak reductions from $(\V,\W;F)$ along $(V_1,W_1)$ and $(V_2,W_2)$ respectively and the thick level of $\mathbf{H}_1$ contained in $\V$, say $T_\V^1$, is isotopic to that of $\mathbf{H}_2$ contained in $\V$, say $T_\V^2$, in $\V$.
Without changing the embeddings of the thick levels of $\mathbf{H}_1$ and $\mathbf{H}_2$ in the relevant compression bodies up to isotopy, we can assume that $(V_i,W_i)$ is the center of the corresponding building block of $\DVW(F)$, say $\mathcal{B}_i$, for $i=1,2$ by Lemma \ref{lemma-3-15}.

Suppose that $\mathcal{B}_1$ and $\mathcal{B}_2$ are different.
Now we isotope $T_\V^2$ into $T_\V^1$ in $\V$ to treat them as the same surface.
Let $\V'$ be the solid between $T_\V^1$ and $\partial_+\V$.
Then $\V'$ is a genus three compression body with at least one minus boundary component of genus two and $V_1$ and $V_2$ are compressing disks of $\V'$ by construction.
If $\partial_-\V'$ is connected, then we can see that every separating disk of $\V'$ must cut off a solid torus from $\V'$.
Hence, Lemma \ref{lemma-3-14} forces $V_1$ and $V_2$ to be non-separating disks of $\V'$.
But there is a unique non-separating disk in $\V'$ up to isotopy since $\V'$ is a genus three compression body with minus boundary consisting of a genus two surface.
Hence, $V_1$ is isotopic to $V_2$ in $\V'$ therefore so in $\V$.
But this means that the centers of $\mathcal{B}_1$ and $\mathcal{B}_2$ intersect each other, violating  Theorem \ref{theorem-3-13}.
If $\partial_-\V'$ is disconnected, then $V_1$ and $V_2$ must be isotopic to the unique compressing disk in $\V'$ since $\V'$ is a genus three compression body with minus boundary consisting of a genus two surface and a torus.
Hence, we also get a contradiction similarly.

Therefore, $\mathcal{B}_1$ and $\mathcal{B}_2$ are the same building block, i.e. the embedding of the thick level contained in $\W$ of $\mathbf{H}_1$ is also isotopic to that of $\mathbf{H}_2$ by Lemma \ref{lemma-3-15}.

Hence, we only need to count the number of ways to embed the thick level contained in $\V$ into $\V$ for an isotopy class of a generalized Heegaard splitting obtained by weak reduction from $(\V,\W;F)$.
Since Lemma \ref{lemma-3-15} means that two different embeddings correspond to  two different building blocks, the the number of ways to embed the thick level contained in $\V$ into $\V$ is exactly the same as the number of components of $\DVW(F)$ by Theorem \ref{theorem-3-13}.
This completes the proof of the second statement.

This completes the proof.
\end{proof}

\section*{Acknowledgments}
This research was supported by BK21 PLUS SNU Mathematical Sciences Division.

\end{document}